\numberwithin{equation}{section}
\begin{document}
\title[Extragradient and linesearch and algoritms for solving... ]
{Extragradient and linesearch algorithms for solving equilibrium problems and fixed point problems in Banach spaces}
\author[Z. Jouymandi and F. Moradlou]{Zeynab Jouymandi$^1$ and Fridoun Moradlou$^2$}
\address{\indent $^{1,2}$Department of Mathematics
\newline \indent Sahand University of Technology
\newline \indent Tabriz, Iran}
\email{\rm $^1$ z\_jouymandi@sut.ac.ir \& z.juymandi@gmail.com}
\email{\rm $^2$ moradlou@sut.ac.ir \& fridoun.moradlou@gmail.com}
\thanks{}
\begin{abstract}
In this paper, using sunny generalized nonexpansive retraction, we
propose new extragradient and linesearch algorithms for finding a
common element of the set of solutions of an equilibrium problem
and the set of fixed points of a relatively nonexpansive mapping
in Banach spaces. To prove strong convergence of iterates in the
extragradient method, we introduce a $\phi$-Lipschitz-type
condition and assume that the equilibrium bifunction satisfies in
this condition. This condition is unnecessary when the linesearch
method is used instead of the extragradient method. A numerical
example is given to illustrate the usability of our results. Our
results generalize, extend and enrich some existing results in the
literature.
\end{abstract}
\subjclass[2010]{Primary  65K10, 90C25, 47J05, 47J25}

\keywords{Equilibrium problem, Extragradient method, $\phi$-Lipschitz-type, Linesearch algorithm, Relatively nonexpansive mapping, Sunny generalized nonexpansive retraction.}

\maketitle

\baselineskip=15.8pt \theoremstyle{definition}
  \newtheorem{df}{Definition}[section]
    \newtheorem{rk}{Remark}[section]
\theoremstyle{plain}
 \newtheorem{lm}{Lemma}[section]
  \newtheorem{thm}{Theorem}[section]
  \newtheorem{exa}{Example}[section]
  \newtheorem{cor}{Corollary}[section]
\newtheorem{prop}{Proposition}[section]
\setcounter{section}{0}
%\renewcommand{\baselinestretch}{3}
%%%%%%%%%%%%%%%%%%%%%%%%%%%%%%%%%%%%%%%%%%%%%%%%%%%%%%
\section{Introduction}
%\addtolength{\baselineskip}{2ex}
In this paper, we consider the following equilibrium problem ($EP$)
in the sense of Blum and Otteli \cite{Blum}, which consists in
finding a point $x^{*}\in C$ such that
\begin{equation*}\label{vthoe1}
f(x^{*},y)\geq 0, \: \:\ \forall \ y\in C,
\end{equation*}
where $C$ is a nonempty, closed and convex subset of a real Banach
space $E$ and \linebreak
$f:C\times C\rightarrow \mathbb{R}$ is an equilibrium
bifunction, i.e., $f(x,x)=0$ for all $x\in C$. The solution set of
($EP$) is denoted by $E(f)$. The equilibrium problem which also
known under the name of Ky Fan inequality \cite{Fan} covers, as special
cases, many well-known problems, such as the optimization problem,
the variational inequality problem and nonlinear complementarity
problem, the saddle point problem, the generalized Nash equilibrium
problem in game theory, the fixed point problem and others; (see
\cite{Mu,Stro}). Also numerous problems in physic and
economic reduce to find a solution of an equilibrium problem. Many
methods have been proposed to solve the equilibrium problems see for
instance \cite{Blum,Mou,Mu,Van,Vuo}. In 1980,\linebreak Cohen \cite{Coh}
introduced a useful tool for solving optimization problem which is
known as auxiliary problem principle and extended it to
variational inequality \cite{Cohe}. In auxiliary problem principle a
sequence $\{x_k\}$ is generated as follows: $x_{k+1} \in C$ is a
unique solution of the following strongly convex problem
\begin{equation}\label{equ.1}
\min\Big\{c_k f(x_k, y) + \frac{1}{2} \|x_k - y\|\Big\},
\end{equation}
where $c_k >0$. Recently, Mastroeni \cite{Mas} extended the auxiliary
problem principle to equilibrium problems under the assumptions that
the equilibrium function $f$ is strongly monotone on $C\times C$ and
that $f$ satisfies the following Lipschitz- type condition:
\begin{equation}\label{equ.2}
f(x,y)+f(y,z)\geq f(x,z)-c_{1}\|y-x\|^{2}-c_{2}\|z-y\|^{2},
\end{equation}
for all $x,y,z \in C$ where $c_1, c_2>0$. To avoid the monotonicity
of $f$, motivated by Antipin \cite{Ant}, Tran et al. \cite{Tr} have used
an extrapolation step in each iteration after solving (\ref{equ.1})
and suppose that $f$ is pseudomonotone on $C\times C$ which is
weaker than monotonicity assumption. They assumed $y_k$ was the
unique solution of (\ref{equ.1}) and the unique solution of the
following strongly convex problem
\begin{equation*}
\min\Big\{c_k f(y_k, y) + \frac{1}{2} \|y - x_k\|\Big\},
\end{equation*}
is denoted by $\{x_{k+1}\}$. In special case , when the problem
$(EP)$ is a variational inequality problem, this method reduces to
the classical extragradient method which has been introduced by
Korpelevich \cite{korp}.
%%%%%%%%%%%%%%%%%%%%%%%%%%%%%%%%%%%%%%%%%%%%%%%%%%%%%%%%%%%%%%%%%%%%%
The extragradient method is well known because of its efficiency
in numerical tests. In the recent years, many authors obtained
extragradient algorithms for solving ($EP$) in Hilbert spaces
where convergence of the proposed algorithms was required $f$ to
satisfy a certain Lipschitz-type condition \cite{Ng,Tr,Vuo}.
Lipschitz-type condition depends on two positive parameters
$c_{1}$ and $c_{2}$ which in some cases, they are unknown or
difficult to approximate. In other to avoid this requirement,
authors used the linesearch technique in a Hilbert space to obtain
convergent algorithms for solving equilibrium problem
\cite{Ng,Tr,Vuo}.
%%%%%%%%%%%%%%%%%%%%%%%%%%%%%%%%%%%%%%%%%%%%%%%%%%%%%%%%%%%%%%%%%%%%
\par
In this paper, we consider the following auxiliary equilibrium
problem ($AUEP$) for finding $x^{*}\in C$ such that
\begin{equation}\label{vthoe2}
\rho f(x^{*},y)+L(x^{*},y)\geq 0,
\end{equation}
for all $y\in C$, where $\rho>0$ is a regularization parameter and
$L:C\times C\rightarrow \mathbb{R}$ be a nonnegative differentiable
convex bifunction on $C$ with respect to the second argument $y$,
for each fixed $x\in C$, such that
 \begin{enumerate}
\item[(i)]$L(x,x)=0$ for all $x\in C$,
\item[(ii)]$\nabla_{2}L(x,x)=0$ for all $x\in C$.
\end{enumerate}
Where $\nabla_{2}L(x,x)$ denotes the gradient of the function
$L(x,.)$ at $x$.
\par
In the recent years, many authors studied the problem of finding a
common element of the set of fixed points of a nonlinear mapping
and the set of solutions of an equilibrium problem in the
framework of Hilbert spaces and Banach spaces, see for instance
\cite{Ceng,Qin,tak,Ta,Vuo}. In all of these methods, authors have
used metric projection in Hilbert spaces and generalized metric
projection in Banach spaces.
\par
In this paper, motivated D. Q. Tran et al. \cite{Tr} and P. T.
vuong et al. \cite{Vuo}, we introduce new extragradient and
linesearch algorithms for finding a common element of the set of
solutions of an equilibrium problem and the set of fixed points of
a relatively nonexpansive mapping in Banach spaces, by using sunny
generalized nonexpansive retraction. Using this method, we prove
strong convergence theorems under suitable conditions.
%%%%%%%%%%%%%%%%%%%%%%%%%%%%%%%%%%%%%%%%%%%%%%%%%%%%%%%%%%%%%%%%
\section{Preliminaries}
 We denote by $J$ the normalized duality mapping from $E$ to $2^{E^{*}}$ defined by
\[Jx=\lbrace x^{*}\in E^{*} :\langle x , x^{*} \rangle=\Vert x\Vert^{2}=\Vert x^{*}\Vert^{2}\rbrace, \  \forall  \ x\in E.\]
Also, denote the strong convergence and the weak convergence of a sequence $\{x^{k}\}$ to $x$ in $E$ by $x^{k}\rightarrow x$ and $x^{k}\rightharpoonup x$, respectively.\\
 %denote the weak$^{^{*}}$ convergence of a sequence $\{x^{*^{k}}\}$ to $x^{*}$ in $E^{*}$ by $x^{*^{k}}\rightharpoonup^{*} x^{*}$ and use the notation $\|.\|$ for norm.\\
Let $S(E)$ be the unite sphere centered at the origin of $E$. A Banach space $E$ is strictly convex if $\|\frac{x+y}{2}\|<1$, whenever $x, y \in S(E)$ and $x\neq y$.
Modulus of convexity of $E$ is defined by $$\delta_{E}(\epsilon)=inf \{1-\frac{1}{2}\|(x+y)\|:\ \|x\|, \|y\|\leq 1, \ \|x-y\|\geq \epsilon\}$$ for all $\epsilon\in [0, 2]$. $E$ is said to be uniformly convex if $\delta_{E}(0)=0$ and $\delta_{E}(\epsilon)>0$ for all $0<\epsilon\leq2$. Let $p$ be a fixed real number with $p\geq2$. A Banach space $E$ is said to be $p$-uniformly convex \cite{kato} if there exists a constant $c>0$ such that $\delta_{E}\geq c\epsilon^{p}$ for all $\epsilon\in [0, 2]$.
The Banach space $E$ is called smooth if the limit
 \begin{equation}\label{eq1}
 \lim_{t\rightarrow0}\frac{\|x+ty\|-\|x\|}{t},
 \end{equation}
 exists for all $x,y\in S(E)$. It is also said to be uniformly smooth if the limit
  $(\ref{eq1})$ is attained uniformly for all $x,y\in S(E)$. Every uniformly smooth Banach space $E$ is smooth.
If a Banach space $E$ uniformly convex, then $E$ is reflexive and strictly convex \cite{Agarwal,Takahashi}.
  \par
  Many properties of the normalized duality mapping $J$ have been given in \cite{Agarwal,Takahashi}.\\ We give some of those in the following:
  \begin{enumerate}
%\item $J(0) =\{0\}$.
\item For every $x\in E$, $Jx$ is nonempty closed convex and bounded subset of $E^*$.
\item If $E$ is smooth or $E^*$ is strictly convex, then $J$ is single-valued.
\item If $E$ is strictly convex, then $J$ is one-one.
%i.e., if $x\neq y$ then $ Jx\cap Jy=\phi$.
\item If $E$ is reflexive, then $J$ is onto.
%\item If $E$ is smooth, then $J$ is single-valued.
\item If $E$ is strictly convex, then $J$ is strictly monotone, that is,
$$\langle x-y, Jx-Jy\rangle>0,$$
for all  $x, y\in E$ such that $x\neq y$.
%\item if $E$ is smooth and reflexive, then $J$ is norm-to-weak$^{*}$ continuous, that is, $Jx^{k}\rightharpoonup^{*}Jx$ whenever $x^{k}\rightarrow x$.
\item If $E$ is smooth, strictly convex  and reflexive  and $J^*:E^*\rightarrow 2^E$ is the normalized duality mapping on $E^*$, then $J^{-1}=J^*$, $JJ^*=I_{E^*}$ and $J^*J=I_{E}$, where $I_{E}$ and $I_{E^*}$ are the identity mapping on $E$ and $E^*$, respectively.
\item If $E$ is  uniformly convex and uniformly smooth, then $J$ is uniformly norm-to-norm continuous on bounded sets of $E$ and $J^{-1}=J^*$ is also uniformly norm-to-norm continuous on bounded sets of $E^*$, i.e., for $\varepsilon>0$ and $M>0$, there is a $\delta>0$ such that
\begin{equation}\label{eq30}
\|x\|\leq M, \ \|y\|\leq M \ \ \text{and} \ \ \|x-y\|<\delta \ \ \Rightarrow \ \ \|Jx-Jy\|<\varepsilon,
\end{equation}
\begin{equation}\label{eq18}
\|x^{*}\|\leq M, \ \|y^{*}\|\leq M \ \ \text{and} \ \ \|x^{*}-y^{*}\|<\delta \ \ \Rightarrow \ \ \|J^{-1}x^{*}-J^{-1}y^{*}\|<\varepsilon.
\end{equation}
\end{enumerate}
\par
Let $E$ be a smooth Banach space, we define the function $\phi: E\times E\rightarrow \mathbb{R}$ by \[\phi(x, y)=\Vert x\Vert^{2}-2\langle x,Jy \rangle+\Vert y\Vert^{2},\]
for all $x, y\in E$. Observe that, in a Hilbert space $H$, $\phi(x,y)=\|x-y\|^{2}$ for all $x,y\in H.$\\
It is clear from definition of $\phi$ that for all $x, y, z, w\in E$,
\begin{equation}\label{eq14}
(\Vert x\Vert -\Vert y\Vert)^{2} \leq \phi(x,y)\leq (\Vert x\Vert+\Vert y\Vert)^{2},
\end{equation}
\begin{equation}\label{eq36}
\phi(x,y)=\phi(x,z)+\phi(z,y)+2\langle x-z,Jz-Jy\rangle,
\end{equation}
\begin{equation}\label{equa6}
2\langle x-y,Jz-Jw\rangle=\phi(x,w)+\phi(y,z)-\phi(x,z)-\phi(y,w).
\end{equation}
If $E$ additionally assumed to be strictly convex, then
\begin{equation}\label{equa21}
\phi(x,y)=0\:\:\:\Longleftrightarrow\:\:\: x=y.
\end{equation}
Also, we define the function $V:E\times E^{*}\rightarrow \mathbb{R}$ by $V(x, x^{*})=\| x\|^{2}-2<x, x^{*}>+\| x^{*} \|^{2}$,\\ for all $x\in E$ and $x^{*}\in E$. That is, $V(x,x^{*})=\phi(x,J^{-1}x^{*})$ for all $x\in E$ and $x\in E^{*}$.\\
It is well known that, if $E$ is a reflexive strictly convex and smooth Banach space with $E^{*}$ as its dual, then
\begin{equation}\label{eq5}
 V(x, x^{*})+2\langle J^{-1}x^{*}-x,y^{*}\rangle \leq V(x, x^{*}+y^{*}),
 \end{equation}
 for all $x\in E$ and all $x^{*},y^{*}\in E^{*}$ \cite{Rockfellar}.
\par
Let $E$ be a smooth Banach space and $C$ be a nonempty subset of $E$. A mapping $T: C\rightarrow C$ is called generalized nonexpansive \cite{Ibaraki} if $F(T)\neq \emptyset$ and $$\phi(y,Tx)\leq \phi(y,x),$$
for all $x\in C$ and all $y\in F(T).$\\
Let $C$ be a closed convex subset of $E$ and $T:C\rightarrow C$ be a mapping. A point $p$ in $C$ is said to be an asymptotic fixed point of $T$ if $C$ contains a sequence $\{x^{k}\}$ which converges weakly to $p$ such that $\lim\limits_{k\rightarrow\infty}(Tx^{k}-x^{k})=0$. The set of asymptotic fixed points of $T$ will be denoted by $\hat{F}(T)$.
A mapping $T:C\rightarrow C$ is called relatively nonexpansive if $\hat{F}(T)=F(T)$ and $\phi(p,Tx)\leq\phi(p,x)$ for all $x\in C$ and $p\in F(T)$. The asymptotic behavior of relatively nonexpansive mappings was studied in \cite{But}. $T$ is said to be relatively quasi-nonexpansive if $F(T)\neq\emptyset$ and $\phi(p,Tx)\leq\phi(p,x)$ for all $x\in C$ and all $p\in F(T)$. The class of relatively quasi-nonexpansive mapping is broader than the class of relatively nonexpansive mappings which requires $\hat{F}(T)=F(T)$.\\
It is well known that, if $E$ is a strictly convex and smooth Banach space, $C$ is a nonempty closed convex subset of $E$ and $T:C\rightarrow C$ is a relatively quasi-nonexpansive mapping, then $F(T)$ is a closed convex subset of $C$ \cite{Q}.
\par
Let $D$ be a nonempty subset of a Banach space $E$. A mapping $R:E\rightarrow D$ is said to be sunny \cite{Ibaraki} if $$R(Rx+t(x-Rx))=Rx,$$
for all $x\in E$ and all $t\geq 0$. A mapping $R:E\rightarrow D$ is said to be a retraction if $Rx=x$ for all $x\in D$.
$R$ is a sunny nonexpansive retraction from $E$ onto $D$ if $R$ is a retraction which is also sunny and nonexpansive.
A nonempty subset $D$ of a smooth Banach space $E$ is said to be a generalized nonexpansive retract (resp. sunny generalized nonexpansive retract) of $E$ if there exists a generalized nonexpansive retraction (resp. sunny generalized nonexpansive retraction) $R$ from $E$ onto $D$.\\
If $E$ is a smooth, strictly convex and reflexive Banach space,
$C^{*}$ be a nonempty closed convex subset of $E^{*}$ and
$\Pi_{C^{*}}$ be the generalized metric projection of $E^{*}$ onto
$C^{*}$. Then the $R=J^{-1}\Pi_{C^{*}}J$ is a sunny generalized
nonexpansive retraction of $E$ onto $J^{-1}C^{*}$ \cite{Kohsa}.
\begin{rk}
If $E$ is a Hilbert space. Then $R_{C}=\Pi_{C}=P_{C}$.
\end{rk}
We need the following lemmas for the proof of our main results.\\
If $C$ is a convex subset of Banach space $E$, then we denote by $N_{C}(\nu)$ the normal cone for $C$ at a point $\nu\in C$, that is \[N_{C}(\nu):=\{x^{*}\in E^{*}: \langle \nu-y,x^{*}\rangle\geq0,\ \forall  \ y\in C\}.\]
Suppose that $E$ is a Banach space and let $f:E\rightarrow (-\infty,+\infty]$ be a paper function. For $x_{0}\in D(f)$, we define the subdifferential of $f$ at $x_{0}$ as the subset of $E^{*}$ given by
$$\partial f(x_{0})=\{x^{*}\in E^{*}: \: f(x)\geq f(x_{0})+\langle x^{*},x-x_{0}\rangle, \: \forall x\in E\}.$$
If $\partial f(x_{0})\neq \emptyset$, then we say $f$ is subdifferentiable at $x_{0}$.
\begin{lm}\label{lem7}
Let $C$ be a nonempty convex subset of a Banach space $E$ and $f:E\rightarrow\mathbb{R}$ be a convex and subdifferentiable function, then $f$ is minimized at $x\in E$ if and only if $$0\in \partial f(x)+N_{C}(x).$$
\end{lm}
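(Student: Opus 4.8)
The plan is to reduce the constrained minimization of $f$ over $C$ to an unconstrained minimization on $E$ and then to split the subdifferential of the resulting sum. Throughout, ``$f$ is minimized at $x$'' is read as $x\in C$ and $f(x)\le f(y)$ for all $y\in C$, and $i_{C}:E\to(-\infty,+\infty]$ denotes the indicator function of $C$ (so $i_{C}=0$ on $C$ and $i_{C}=+\infty$ off $C$).

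First I would record two elementary facts obtained purely by unwinding definitions. (a) For a convex function $g:E\to(-\infty,+\infty]$ and a point $x\in D(g)$, one has $0\in\partial g(x)$ if and only if $g(x)\le g(y)$ for every $y\in E$, since $0\in\partial g(x)$ says exactly $g(y)\ge g(x)+\langle 0,\,y-x\rangle$. (b) For $x\in C$, $\partial i_{C}(x)=N_{C}(x)$: the defining inequality for $x^{*}\in\partial i_{C}(x)$ is automatic when $y\notin C$ and, for $y\in C$, reads $\langle x^{*},\,y-x\rangle\le 0$, which is precisely the condition $x^{*}\in N_{C}(x)$.

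Next I would apply (a) to $g:=f+i_{C}$. Since $x$ minimizes $f$ over $C$ exactly when it minimizes $f+i_{C}$ over all of $E$, fact (a) gives that $f$ is minimized at $x$ iff $0\in\partial(f+i_{C})(x)$; together with (b), the lemma is therefore equivalent to the sum formula $\partial(f+i_{C})(x)=\partial f(x)+\partial i_{C}(x)$. The inclusion ``$\supseteq$'' is immediate from the definition of the subdifferential (add the two subgradient inequalities). The reverse inclusion ``$\subseteq$'' is the Moreau--Rockafellar sum theorem, whose constraint qualification is met here because $f$ is finite-valued and convex on all of $E$, hence continuous, so that $f$ is continuous at the points of $C$. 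Chaining the equivalences then proves the lemma.

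I expect the only genuine obstacle to be that last inclusion $\partial(f+i_{C})(x)\subseteq\partial f(x)+N_{C}(x)$, i.e.\ the Moreau--Rockafellar sum rule, whose proof proceeds by a Hahn--Banach separation of the epigraph of $f$ from a convex set built from $C$ and the value $f(x)$. If one prefers to keep the lemma self-contained, the ``if'' direction can be given directly and without any separation: if $x^{*}\in\partial f(x)$ and $-x^{*}\in N_{C}(x)$, then for every $y\in C$ one gets $f(y)-f(x)\ge\langle x^{*},\,y-x\rangle\ge 0$; it is only the ``only if'' direction that truly requires the separation argument.
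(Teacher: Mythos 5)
The paper states this lemma with no proof at all, so there is no argument of the authors' to compare yours against; judged on its own terms, your proof is the standard one and is essentially correct. Passing to the unconstrained minimization of $f+i_{C}$, identifying $\partial i_{C}(x)$ with $N_{C}(x)$, and splitting $\partial(f+i_{C})$ is precisely what the paper's own Lemma \ref{lem8} is imported for: since $\mathrm{Dom}\,f=E$, the qualification $0\in \mathrm{Int}(\mathrm{Dom}\,f-\mathrm{Dom}\,i_{C})=E$ holds trivially, so you could simply cite that lemma rather than re-derive the Moreau--Rockafellar rule. The one sentence I would not let stand as written is ``$f$ is finite-valued and convex on all of $E$, hence continuous'': in an infinite-dimensional Banach space a finite convex function need not be continuous (a discontinuous linear functional is a counterexample), so the ``hence'' is unjustified as stated. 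It is rescued by the hypothesis you did not use, namely that $f$ is subdifferentiable: a subgradient inequality at each point forces $f$ to be lower semicontinuous, and a lower semicontinuous convex function finite on all of a Banach space is continuous by a Baire category argument; alternatively, bypass continuity altogether and invoke Lemma \ref{lem8} via the interiority condition. Your closing remark that the ``if'' direction needs no separation argument --- $f(y)-f(x)\geq\langle x^{*},y-x\rangle\geq 0$ for $x^{*}\in\partial f(x)$ with $-x^{*}\in N_{C}(x)$ --- is correct and is the half of the lemma the paper actually uses most often.
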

%\begin{proof}
%Clearly, it following from definitions $N_{C}(x)$ and $\partial f(x)$.
%\end{proof}
\begin{lm}\label{lem8}\cite{Au}
Let $E$ be a reflexive Banach space. If $f:E\rightarrow \mathbb{R}\cup\{+\infty\}$ and $g:E\rightarrow \mathbb{R}\cup\{+\infty\}$ are nontrivial, convex and lower continuous functions and if $\:0\in Int (Dom f-Dom g)\:$, then
$$\partial(f+g)(x)=\partial f(x)+\partial g(x).$$
\end{lm}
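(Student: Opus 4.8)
The plan is to establish the two inclusions separately. The inclusion $\partial f(x)+\partial g(x)\subseteq\partial(f+g)(x)$ needs no hypothesis whatsoever: if $x_{1}^{*}\in\partial f(x)$ and $x_{2}^{*}\in\partial g(x)$, then adding the two subgradient inequalities $f(y)\geq f(x)+\langle x_{1}^{*},y-x\rangle$ and $g(y)\geq g(x)+\langle x_{2}^{*},y-x\rangle$, which hold for every $y\in E$, gives $(f+g)(y)\geq(f+g)(x)+\langle x_{1}^{*}+x_{2}^{*},y-x\rangle$, that is $x_{1}^{*}+x_{2}^{*}\in\partial(f+g)(x)$. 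Hence all the content of the lemma is carried by the reverse inclusion, and that is the only place where the qualification condition $0\in\mathrm{Int}(\mathrm{Dom}\,f-\mathrm{Dom}\,g)$, the completeness of $E$, and (as the statement assumes) its reflexivity are needed.

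For the reverse inclusion I would fix $z^{*}\in\partial(f+g)(x)$ and first reduce to a minimization problem by replacing $f$ with $f(\cdot)-f(x)-\langle z^{*},\cdot-x\rangle$ and $g$ with $g(\cdot)-g(x)$. This preserves all the hypotheses, moves $z^{*}$ to $0$, and shifts $\partial f(x)$, $\partial g(x)$, $\partial(f+g)(x)$ in a way consistent with the asserted identity. After the reduction $f(x)=g(x)=0$ and $f+g\geq0$ on $E$, so $x$ is a global minimizer of $f+g$ of value $0$, and it is enough to find $x^{*}\in E^{*}$ with $x^{*}\in\partial f(x)$ and $-x^{*}\in\partial g(x)$; writing these out, one needs
\[
-g(y)\;\leq\;\langle x^{*},y-x\rangle\;\leq\;f(y)\qquad\text{for all }y\in E .
\]
Geometrically such an $x^{*}$ is a closed, non-vertical hyperplane in $E\times\mathbb{R}$ through $(x,0)$ separating $\mathrm{epi}\,f$ from $\{(y,t):t\leq-g(y)\}$; analytically it is exactly strong Fenchel duality, $\inf_{E}(f+g)=-\min_{u^{*}\in E^{*}}(f^{*}(u^{*})+g^{*}(-u^{*}))$, together with attainment of the dual minimum.

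To produce $x^{*}$ I would introduce the marginal function $h(w):=\inf_{u\in E}(f(u)+g(u+w))$, which is convex with $h(0)=\inf_{E}(f+g)=0$ and $\mathrm{Dom}\,h=\mathrm{Dom}\,g-\mathrm{Dom}\,f$. The condition $0\in\mathrm{Int}(\mathrm{Dom}\,f-\mathrm{Dom}\,g)$ places $0$ in the interior of $\mathrm{Dom}\,h$. A Baire category argument in the complete space $E$ --- covering a neighbourhood of $0$ by countably many sublevel sets of $h$, using the lower semicontinuity of $f$ and $g$ to close them up, and using reflexivity to attain the infima defining $h$ along weakly convergent subsequences --- then shows that $h$ is finite and bounded above near $0$, hence continuous at $0$, so $\partial h(0)\neq\emptyset$. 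For $p\in\partial h(0)$ the inequality $h(w)\geq\langle p,w\rangle$, after the substitution $v=u+w$, becomes $f(u)+g(v)\geq\langle p,v-u\rangle$ for all $u,v\in E$; taking infima over $u$ and over $v$ separately yields $f^{*}(-p)+g^{*}(p)\leq0$, which together with the Fenchel--Young inequalities at $x$ forces $f^{*}(-p)+g^{*}(p)=0$ with equality in Fenchel--Young, i.e. $-p\in\partial f(x)$ and $p\in\partial g(x)$. Then $x^{*}:=-p$ satisfies the sandwich, and $z^{*}=0=x^{*}+(-x^{*})\in\partial f(x)+\partial g(x)$, as required.

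The main obstacle is precisely the step from ``$0$ lies in the interior of $\mathrm{Dom}\,h$'' to ``$h$ is continuous at $0$'' --- equivalently, excluding a vertical separating hyperplane, or proving dual attainment in the Fenchel picture; this is the one place that genuinely uses the Banach (and, as stated, reflexive) structure of $E$. The two reductions, the subgradient additions, and the Fenchel--Young bookkeeping are all routine, so I would expect essentially all of the work to concentrate on making the Baire-category/continuity argument rigorous.
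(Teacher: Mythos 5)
The paper does not prove this lemma at all --- it is imported verbatim from the cited reference (Aubin), so there is no in-paper argument to compare against. Your proposal is the standard proof of the Moreau--Rockafellar/Attouch--Br\'ezis sum rule, and it is correct in outline: the easy inclusion by adding subgradient inequalities, the normalization putting $z^{*}=0$ and $f(x)=g(x)=0$, the marginal function $h(w)=\inf_{u}(f(u)+g(u+w))$ with $\mathrm{Dom}\,h=\mathrm{Dom}\,g-\mathrm{Dom}\,f$, and the Fenchel--Young bookkeeping at the end (which forces $f^{*}(-p)+g^{*}(p)=0$ and hence $-p\in\partial f(x)$, $p\in\partial g(x)$) are all exactly right. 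The one place where the sketch should be tightened is the Baire step: you cannot literally cover a neighbourhood of $0$ by ``sublevel sets of $h$,'' because $h$ need not be lower semicontinuous and its sublevel sets need not be closed. The correct covering sets are $C_{n}=\{v-u:\ f(u)\le n,\ g(v)\le n,\ \|u\|\le n,\ \|v\|\le n\}$; each is convex and is the image of a bounded, closed, convex (hence, by reflexivity, weakly compact) subset of $E\times E$ under the weakly continuous map $(u,v)\mapsto v-u$, so it is weakly compact and norm-closed. These sets cover the ball $B(0,\rho)\subseteq\mathrm{Dom}\,g-\mathrm{Dom}\,f$ supplied by the interiority hypothesis, Baire gives some $C_{N}$ with nonempty interior, and a convexity/symmetrization step (using that the reflected interior point also lies in some $C_{M}$) places a ball around $0$ inside $C_{M}$, on which $h\le 2M$; continuity of $h$ at $0$ and $\partial h(0)\neq\emptyset$ then follow. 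With that substitution your argument is complete and is, in substance, the proof one finds in the cited source.
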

\begin{lm}\label{lem20}\cite{Au}
Suppose that a convex function $f$ is continuouse on the interior of its domain. Then, for all $x\in Int\: (Dom f)$, $\partial f(x)$
is non-empty and bounded.
\end{lm}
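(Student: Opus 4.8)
The plan is to establish the two assertions — nonemptiness and boundedness of $\partial f(x)$ — separately, in both cases leveraging the fact that continuity of a convex function on $\mathrm{Int}(\mathrm{Dom}\,f)$ gives strong local regularity, not just the order-theoretic consequences of convexity.

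For nonemptiness I would argue via a supporting hyperplane to the epigraph. Fix $x\in\mathrm{Int}(\mathrm{Dom}\,f)$. Continuity of $f$ at $x$ gives a radius $r>0$ and a constant $M$ with $f\le M$ on the ball $B(x,r)\subseteq\mathrm{Dom}\,f$, so the convex set $\mathrm{epi}\,f=\{(y,t)\in E\times\mathbb{R}:t\ge f(y)\}$ contains $B(x,r)\times(M,\infty)$ and hence has nonempty interior in $E\times\mathbb{R}$, while $(x,f(x))$ lies on its boundary. The geometric Hahn--Banach theorem then yields a nonzero pair $(x^{*},\alpha)\in E^{*}\times\mathbb{R}$ with $\langle x^{*},y-x\rangle+\alpha(t-f(x))\le 0$ for all $(y,t)\in\mathrm{epi}\,f$. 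Taking $y=x$ and $t\to+\infty$ forces $\alpha\le0$; and $\alpha=0$ would give $\langle x^{*},y-x\rangle\le0$ for all $y$ in a neighbourhood of $x$, hence $x^{*}=0$, contradicting $(x^{*},\alpha)\neq0$. Therefore $\alpha<0$, and setting $t=f(y)$ and dividing by $-\alpha$ produces $f(y)\ge f(x)+\langle -x^{*}/\alpha,\,y-x\rangle$ for every $y\in E$, i.e. $-x^{*}/\alpha\in\partial f(x)$.

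For boundedness I would first upgrade continuity to a local Lipschitz estimate: a convex function bounded above by $M$ on $B(x,r)$ is, by the standard reflection/convexity argument, Lipschitz on $B(x,r/2)$ with a constant $L$ depending only on $M$, $f(x)$ and $r$. Given any $x^{*}\in\partial f(x)$ and any unit vector $u\in S(E)$, applying the subgradient inequality at the point $x+\tfrac{r}{2}u$ gives $\tfrac{r}{2}\langle x^{*},u\rangle\le f(x+\tfrac{r}{2}u)-f(x)\le\tfrac{Lr}{2}$, so $\langle x^{*},u\rangle\le L$; taking the supremum over $u\in S(E)$ gives $\|x^{*}\|\le L$. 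Hence $\partial f(x)$ is contained in the closed ball of radius $L$ in $E^{*}$ and is bounded.

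The step requiring the most care is the first: one must genuinely verify that $\mathrm{epi}\,f$ has nonempty interior in the product space (this is exactly the point at which continuity, rather than mere convexity, enters), and then extract from the supporting functional that its vertical component $\alpha$ is strictly negative, so that the hyperplane is non-vertical and can be normalized into a true affine minorant of $f$. The passage "bounded above locally $\Rightarrow$ locally Lipschitz" in the second part is routine once that classical lemma on convex functions is invoked.
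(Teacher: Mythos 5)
Your argument is correct. Note that the paper itself gives no proof of this lemma---it is quoted from Aubin's \emph{Optima and equilibria}---so there is nothing to compare against except the standard textbook argument, which is exactly what you have reproduced: local boundedness from continuity gives the epigraph nonempty interior, Eidelheit separation at the boundary point $(x,f(x))$ yields a non-vertical supporting hyperplane (your elimination of the cases $\alpha>0$ and $\alpha=0$ is the right way to see non-verticality), and normalizing gives a subgradient. For the boundedness half, the detour through the local Lipschitz estimate is sound but not needed: from $x^{*}\in\partial f(x)$ and $\|u\|=1$ one gets directly $\tfrac{r}{2}\langle x^{*},u\rangle\le f(x+\tfrac{r}{2}u)-f(x)\le M-f(x)$, hence $\|x^{*}\|\le 2(M-f(x))/r$ using only the upper bound $M$ on $B(x,r)$; the reflection argument that upgrades this to a two-sided bound and a Lipschitz constant is only required if one wants local boundedness of $x\mapsto\partial f(x)$ uniformly over a neighbourhood, which the lemma does not ask for.
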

\begin{lm}\label{lem13}
Let $C$ be a nonempty convex subset of a Banach space $E$ and let $f:C\times C\rightarrow\mathbb{R}$ be an equilibrium bifunction and convex respect to the second variable. then the following statements are equivalent:
\begin{enumerate}
\item[(i)] $x^{*}$ is a solution to $E(f)$,
\item[(ii)] $x^{*}$ is a solution to the problem
$$\min_{y\in C}f(x^{*},y).$$
\end{enumerate}
\end{lm}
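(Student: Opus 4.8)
The plan is to prove both implications directly from the defining property of an equilibrium bifunction, namely $f(x,x)=0$ for every $x\in C$, together with the elementary fact that $x^{*}\in C$. No heavy machinery is needed; in particular the convexity of $f$ in its second variable will not be used for this equivalence (it is listed because it is what makes the minimization problem in (ii) amenable to the optimality condition of Lemma \ref{lem7} later on).

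First I would establish (i)$\Rightarrow$(ii). Assume $x^{*}$ solves $E(f)$, so that $f(x^{*},y)\geq 0$ for all $y\in C$. Since $x^{*}\in C$ and $f(x^{*},x^{*})=0$, the value $0$ is attained by the function $y\mapsto f(x^{*},y)$ at $y=x^{*}$, and it is a lower bound for this function on $C$; hence $x^{*}$ is a minimizer of $f(x^{*},\cdot)$ over $C$, which is exactly (ii).

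For the converse (ii)$\Rightarrow$(i), assume $x^{*}$ minimizes $f(x^{*},\cdot)$ over $C$. Because $x^{*}\in C$, we may use $x^{*}$ itself as a competitor: for every $y\in C$,
\[
f(x^{*},y)\ \geq\ f(x^{*},x^{*})\ =\ 0,
\]
where the equality is the equilibrium property. Thus $f(x^{*},y)\geq 0$ for all $y\in C$, i.e. $x^{*}$ solves $E(f)$.

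I do not expect a genuine obstacle here: the whole content is bookkeeping with the two arguments of $f$. The only point requiring a little care is that both characterizations are stated relative to the \emph{same} point $x^{*}$, which is what allows one to plug $x^{*}$ into both slots of $f$ and invoke $f(x^{*},x^{*})=0$; once this is noticed, the two one-line arguments above close the proof.
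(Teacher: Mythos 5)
Your proof is correct, but it takes a genuinely different route from the paper's. The paper disposes of this lemma in one line by invoking Lemma \ref{lem7}, i.e.\ the first-order optimality characterization $0\in\partial_{2} f(x^{*},x^{*})+N_{C}(x^{*})$ of minimizers of a convex, subdifferentiable function; this is the reason convexity in the second variable appears among the hypotheses. You instead argue by direct substitution: $f(x^{*},y)\geq 0$ for all $y\in C$ together with $f(x^{*},x^{*})=0$ says precisely that the infimum of $f(x^{*},\cdot)$ over $C$ is $0$ and is attained at $x^{*}$; conversely, minimality at $x^{*}$ gives $f(x^{*},y)\geq f(x^{*},x^{*})=0$. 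Your remark that convexity is not actually needed for the equivalence is accurate --- it is used elsewhere (for instance in Lemma \ref{lem2} and in the convergence analysis of the algorithms) to make the minimization subproblems tractable via Lemma \ref{lem7}. Both arguments are valid; yours has the advantage of isolating exactly what the equivalence depends on (only $x^{*}\in C$ and $f(x^{*},x^{*})=0$), whereas the paper's phrasing keeps the lemma aligned with the subdifferential machinery it reuses immediately afterwards.
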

\begin{proof}
Using Lemma \ref{lem7}, we get desired results.
\end{proof}
Equivalence between $E(f)$ and ($AUEP$) is stated in the following lemma.
\begin{lm}\label{lem2}
Let $C$ nonempty, convex and closed subset of a reflexive Banach space $E$ and $f:C\times C\rightarrow\mathbb{R}$ be an equilibrium bifunction and let $x^{*}\in C$. suppose that $f(x^{*},.):C\rightarrow\mathbb{R}$ is convex and subdifferentiable on $C$. Let $L:C\times C\rightarrow\mathbb{R}_{+}$ be a differentiable convex function on $C$ with respect to the second argument $y$ such that
\begin{enumerate}
\item[(i)] $L(x^{*},x^{*})=0$,
\item[(ii)] $\nabla_2L(x^{*},x^{*})=0$.
\end{enumerate}
Then $x^{*}\in C$ is a solution to $E(f)$ if and only if $x^{*}$ is a solution to ($AUEP$).
\end{lm}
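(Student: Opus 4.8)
The plan is to prove the two implications separately. The forward one is immediate, and the reverse one rests on the optimality characterization of Lemma \ref{lem7} (together with its consequence Lemma \ref{lem13}) and on the additivity of the subdifferential from Lemma \ref{lem8}.

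First I would dispose of the easy direction: assume $x^{*}$ solves $E(f)$, so $f(x^{*},y)\geq 0$ for every $y\in C$. Since $\rho>0$ we get $\rho f(x^{*},y)\geq 0$, and since $L$ takes values in $\mathbb{R}_{+}$ we have $L(x^{*},y)\geq 0$; adding these gives $\rho f(x^{*},y)+L(x^{*},y)\geq 0$ for all $y\in C$, i.e.\ $x^{*}$ solves ($AUEP$). No hypothesis on $L$ beyond nonnegativity is needed here.

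For the converse, suppose $x^{*}$ solves ($AUEP$) and set $h(y):=\rho f(x^{*},y)+L(x^{*},y)$ for $y\in C$. Then $h$ is convex in $y$ (a positive multiple of the convex function $f(x^{*},\cdot)$ plus the convex function $L(x^{*},\cdot)$) and subdifferentiable on $C$. By the definition of ($AUEP$), $h(y)\geq 0$ for all $y\in C$, while $h(x^{*})=\rho f(x^{*},x^{*})+L(x^{*},x^{*})=0$ by the equilibrium property of $f$ and hypothesis (i). Hence $h$ attains its minimum over $C$ at $x^{*}$, so Lemma \ref{lem7} yields $0\in\partial h(x^{*})+N_{C}(x^{*})$. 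Applying Lemma \ref{lem8} and using that $L(x^{*},\cdot)$ is differentiable together with hypothesis (ii), $\partial h(x^{*})=\rho\,\partial_{2}f(x^{*},x^{*})+\nabla_{2}L(x^{*},x^{*})=\rho\,\partial_{2}f(x^{*},x^{*})$, where $\partial_{2}f(x^{*},x^{*})$ denotes the subdifferential of $f(x^{*},\cdot)$ at $x^{*}$. Thus $0\in\rho\,\partial_{2}f(x^{*},x^{*})+N_{C}(x^{*})$, and since $N_{C}(x^{*})$ is a cone and $\rho>0$ this is equivalent to $0\in\partial_{2}f(x^{*},x^{*})+N_{C}(x^{*})$. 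By Lemma \ref{lem7} applied to $f(x^{*},\cdot)$, the point $x^{*}$ minimizes $f(x^{*},\cdot)$ over $C$, so $f(x^{*},y)\geq f(x^{*},x^{*})=0$ for all $y\in C$; equivalently, by Lemma \ref{lem13}, $x^{*}$ is a solution of $E(f)$.

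The one point that needs care is the use of the sum rule for subdifferentials: one must check that the qualification condition of Lemma \ref{lem8} (roughly, $0\in\mathrm{Int}(\mathrm{Dom}\,(\rho f(x^{*},\cdot))-\mathrm{Dom}\,L(x^{*},\cdot))$) holds at $x^{*}$. This is harmless here because $L(x^{*},\cdot)$ is differentiable, hence finite and continuous, on $C$, so the subdifferential of the sum splits as $\rho\,\partial_{2}f(x^{*},x^{*})+\{\nabla_{2}L(x^{*},x^{*})\}$; everything else in the argument is routine bookkeeping with the identities $f(x^{*},x^{*})=0$, $L(x^{*},x^{*})=0$, and $\nabla_{2}L(x^{*},x^{*})=0$.
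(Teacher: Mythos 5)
Your proof is correct and follows essentially the same route the paper intends: the paper's own proof is just the one-line remark that the claim ``is clear from lemmas \ref{lem8} and \ref{lem13},'' and your argument is precisely the expansion of that remark (the trivial forward implication from $L\geq 0$, and the reverse implication via the optimality condition of Lemma \ref{lem7}, the sum rule of Lemma \ref{lem8} with $\nabla_{2}L(x^{*},x^{*})=0$, the cone property of $N_{C}(x^{*})$ to absorb $\rho$, and Lemma \ref{lem13}). No gaps; your treatment of the qualification condition for the sum rule is a detail the paper silently omits.
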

\begin{proof}
It is clear from lemmas \ref{lem8} and \ref{lem13}.
\end{proof}
%\begin{lm}\cite{Agarwal}
%Let $E$ be a topological space and $f:E\rightarrow (-\infty,\infty]$ be a function. then the following statements are equivalent:
%\begin{enumerate}
%\item $f$ is lower semicontinuous.
%\item For each $\alpha\in \mathbb{R}$, the level set $\{x\in E: \ \ f(x)\leq\alpha\}$ is closed.
%\item The epigraph of the function $f$, $\{(x,\alpha)\in E\times \mathbb{R}: \ \ f(x)\leq\alpha\}$ is closed.
%\end{enumerate}
%\end{lm}
%\begin{lm}\cite{Agarwal}\label{eq23}
%Let $C$ be nonempty closed convex subset of a Banach space $E$ and\linebreak $f:E\rightarrow (-\infty,\infty]$ be a convex function. Then $f$ is lower semicontinuous in the norm topology if and only if $f$ is lower semicontinuous in the weak topology.
%\end{lm}
\begin{lm}\cite{Ibaraki}
Let $C$ be a nonempty closed sunny generalized nonexpansive retract of a smooth and strictly convex Banach space $E$. Then the sunny generalized nonexpansive retraction from $E$ onto $C$ is uniquely determined.
\end{lm}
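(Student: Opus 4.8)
The plan is to deduce the statement from the variational characterization of sunny generalized nonexpansive retractions, together with the strict monotonicity of the normalized duality mapping $J$ (which is available because $E$ is strictly convex). Since $R$ is a retraction of $E$ onto $C$ we have $F(R)=C$, so generalized nonexpansiveness here reads $\phi(y,Rx)\le\phi(y,x)$ for all $x\in E$ and all $y\in C$. The device I would rely on is the inequality
\begin{equation*}
\langle x-Rx,\ Jy-JRx\rangle\le 0\qquad\text{for all }x\in E,\ y\in C,\tag{$\star$}
\end{equation*}
valid for \emph{every} sunny generalized nonexpansive retraction $R$ of $E$ onto $C$; it is the $\phi$-analogue of the inequality that characterizes the metric projection in Hilbert space. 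I would either quote $(\star)$ from \cite{Ibaraki}, or establish it along the following lines: fix $x\in E$, $y\in C$ and, for $t\in(0,1]$, put $x_{t}=Rx+t(x-Rx)$; the sunny property gives $Rx_{t}=Rx$, hence generalized nonexpansiveness at $x_{t}$ with the fixed point $y$ gives $\phi(y,Rx)=\phi(y,Rx_{t})\le\phi(y,x_{t})$; rewriting $\phi(y,x_{t})$ through the three–point identity $(\ref{eq36})$ with middle point $Rx$, dividing by $t$ and letting $t\to 0^{+}$ (using that $\phi(Rx,x_{t})/t\to 0$, which follows from the G\^ateaux differentiability of $\|\cdot\|^{2}$, i.e. the smoothness of $E$, and from the norm-to-weak$^{*}$ continuity of $J$) produces $(\star)$.

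Granting $(\star)$, the uniqueness proof is short. Suppose $R_{1}$ and $R_{2}$ are both sunny generalized nonexpansive retractions of $E$ onto $C$, and fix $x\in E$. Because $R_{1}x,R_{2}x\in C$, I would apply $(\star)$ to $R_{1}$ with the choice $y=R_{2}x$ and to $R_{2}$ with the choice $y=R_{1}x$, obtaining
\begin{equation*}
\langle x-R_{1}x,\ JR_{2}x-JR_{1}x\rangle\le 0\qquad\text{and}\qquad\langle x-R_{2}x,\ JR_{1}x-JR_{2}x\rangle\le 0.
\end{equation*}
Adding these, the terms containing $x$ cancel and we are left with
\begin{equation*}
\langle R_{2}x-R_{1}x,\ JR_{2}x-JR_{1}x\rangle\le 0.
\end{equation*}
Since $E$ is strictly convex, $J$ is strictly monotone, so $\langle u-v,Ju-Jv\rangle>0$ whenever $u\neq v$; hence the last inequality forces $R_{1}x=R_{2}x$. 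As $x\in E$ was arbitrary, $R_{1}=R_{2}$.

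The main obstacle is the proof of $(\star)$: turning the one-parameter family of inequalities $\phi(y,Rx)\le\phi(y,x_{t})$, $t\in(0,1]$, supplied by the sunny property into the single pointwise inequality $(\star)$ requires a careful limit $t\to 0^{+}$ that genuinely uses the smoothness of $E$. Everything after $(\star)$ is elementary and rests only on the strict monotonicity of $J$ recorded among the basic properties of the duality mapping above; if one prefers not to isolate $(\star)$, the same segment argument can be run for $R_{1}$ and $R_{2}$ at once, combining their generalized-nonexpansiveness inequalities before dividing by $t$, but the limiting step is still the crux.
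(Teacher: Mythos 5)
The paper gives no proof of this lemma; it is imported wholesale from \cite{Ibaraki}, so your argument can only be measured against the cited source. Your uniqueness step is correct and is essentially the standard one: the inequality $(\star)$ is exactly part (1) of Lemma \ref{lem4} (itself quoted from \cite{Ibaraki} a few lines below the statement you are proving), and once it is available, applying it to $R_{1}$ with $y=R_{2}x$ and to $R_{2}$ with $y=R_{1}x$, adding, and using the strict monotonicity of the (single-valued, since $E$ is smooth) duality map $J$ forces $R_{1}x=R_{2}x$. Taking $(\star)$ as a quoted fact, the proof is complete.

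The fallback derivation of $(\star)$ you sketch, however, does not go through, and the failure is structural rather than a matter of care with the limit. From $\phi(y,Rx)=\phi(y,Rx_{t})\le\phi(y,x_{t})$ and the identity (\ref{eq36}) with middle point $Rx$ you get $2\langle y-Rx,\,Jx_{t}-JRx\rangle\le\phi(Rx,x_{t})$, and the right-hand side is indeed $o(t)$ as you claim (two applications of the subgradient inequality for $\|\cdot\|^{2}$ give $\phi(Rx,x_{t})\le 2t\langle x-Rx,Jx_{t}-JRx\rangle$, and $Jx_{t}$ converges weak$^{*}$ to $JRx$ by smoothness). But after dividing by $t$ the surviving term is $t^{-1}\langle y-Rx,\,Jx_{t}-JRx\rangle$, i.e.\ the derivative of $t\mapsto Jx_{t}$ along the segment paired against $y-Rx$: a second-order quantity of the norm that need not converge under smoothness alone and that, even when it does, is not $\langle x-Rx,\,Jy-JRx\rangle$; the two expressions agree only when $J$ is the identity, i.e.\ in Hilbert space. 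Put bluntly, the target $(\star)$ contains the vector $Jy$, whereas in the inequality $\phi(y,Rx)\le\phi(y,x_{t})$ the point $y$ occurs only undualized (the $\|y\|^{2}$ terms cancel), so no limit along the segment can manufacture the pairing $\langle x-Rx,Jy\rangle$. The remedy is the first option you name: quote $(\star)$ from \cite{Ibaraki} --- equivalently, invoke Lemma \ref{lem4}(1) --- after which your strict-monotonicity argument settles uniqueness.
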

\begin{lm}\cite{Ibaraki}\label{lem4}
Let $C$ be a nonempty closed subset of a smooth and strictly convex Banach space $E$ such that there exists a sunny generalized nonexpansive retraction $R$ from $E$ onto $C$ and let $(x,z)\in E\times C$. Then the following hold:
\begin{enumerate}
\item $z=Rx $ if and only if $\langle x-z,Jy-Jz\rangle\leq 0$ for all $y\in C$,
\item $\phi(z,Rx)+\phi(Rx,x)\leq\phi(z,x).$
\end{enumerate}
\end{lm}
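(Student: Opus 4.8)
The statement is the basic lemma of Ibaraki and Takahashi on sunny generalized nonexpansive retractions, and the natural route to it is to recognize $R$ as a generalized projection performed in the dual space and then push the standard facts about that projection back to $E$. Concretely, since a sunny generalized nonexpansive retraction of $E$ onto $C$ is assumed to exist, $JC$ is a nonempty closed convex subset of $E^{*}$; taking $C^{*}=JC$ in the fact recalled just above (that $J^{-1}\Pi_{C^{*}}J$ is a sunny generalized nonexpansive retraction of $E$ onto $J^{-1}C^{*}$) and invoking the uniqueness of the sunny generalized nonexpansive retraction onto $C$, one gets the identification $R=J^{-1}\Pi_{JC}J$, with $\Pi_{JC}$ the generalized metric projection of $E^{*}$ onto $JC$. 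Both assertions then come from the two standard facts about $\Pi_{JC}$ — its variational characterization and its generalized Pythagorean estimate — transported through $J$ with the help of $J^{*}J=I_{E}$, the relation $V(x,x^{*})=\phi(x,J^{-1}x^{*})$, and the algebraic identities (\ref{eq36}), (\ref{eq5}), (\ref{equa6}).

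For part $(1)$ I would establish the forward implication first. The point $J(Rx)=\Pi_{JC}(Jx)$ minimizes a suitable convex function over $JC$, so by the optimality criterion of Lemma \ref{lem7}, applied in $E^{*}$, it satisfies a variational inequality; testing that inequality against the points $Jy$ with $y\in C$ and unwinding the pairing through $J^{*}J=I_{E}$ and $J^{-1}(J(Rx))=Rx$ gives precisely $\langle x-Rx,\,Jy-J(Rx)\rangle\le0$ for all $y\in C$, that is, the claim with $z=Rx$. For the converse, suppose $z\in C$ satisfies $\langle x-z,\,Jy-Jz\rangle\le0$ for all $y\in C$; apply the forward implication to the point $Rx$ to obtain $\langle x-Rx,\,Jy-J(Rx)\rangle\le0$ for all $y\in C$, then put $y=z$ in this inequality and $y=Rx$ in the hypothesis and add the two — the cross terms cancel and one is left with $\langle z-Rx,\,Jz-J(Rx)\rangle\le0$. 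Since $E$ is strictly convex, $J$ is strictly monotone, and this forces $z=Rx$.

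For part $(2)$, identity (\ref{eq36}) applied to the triple $(z,Rx,x)$ shows the stated inequality to be equivalent to the single variational inequality $\langle z-Rx,\,Jx-J(Rx)\rangle\le0$ for all $z\in C$, and this is where the sunniness of $R$ — unused in $(1)$ — enters. One route is to transport the generalized Pythagorean estimate for $\Pi_{JC}$ back to $E$ via (\ref{eq5}) and $V(x,x^{*})=\phi(x,J^{-1}x^{*})$. Alternatively one can argue directly inside $E$: the sunny identity $R(Rx+t(x-Rx))=Rx$ and generalized nonexpansiveness give $\phi(z,Rx)\le\phi\bigl(z,\,Rx+t(x-Rx)\bigr)$ for every $z\in C$ and every $t\ge0$; expanding the right-hand side by (\ref{eq36}) this reads $0\le\phi\bigl(Rx,\,Rx+t(x-Rx)\bigr)+2\langle z-Rx,\,J(Rx)-J(Rx+t(x-Rx))\rangle$, and since the first term tends to $0$ faster than $t$ as $t\downarrow0$ (a consequence of the differentiability of $\|\cdot\|^{2}$ in a smooth space together with the monotonicity of $J$), passing to the limit extracts the required variational inequality, whence $(2)$ follows from (\ref{eq36}).

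The conceptual content here is light — it is the generalized projection theorem, read in $E^{*}$ — so the real hazard is bookkeeping: one must keep rigorous track of which space each vector or functional inhabits and of the interplay of $J$ with $J^{*}=J^{-1}$ and of $\phi$ on $E$ with its $E^{*}$-version, and one must first dispose of the structural fact that $JC$ is closed and convex, which is exactly what authorizes the appeal to the quoted $\Pi_{C^{*}}$-statement and to uniqueness. If one forgoes the dual reduction and works inside $E$, the genuinely delicate point is the limiting step in part $(2)$: showing that $t\mapsto\phi\bigl(Rx,\,Rx+t(x-Rx)\bigr)$ vanishes faster than $t$ must be obtained from smoothness of the norm plus monotonicity of $J$, since $J$ itself need not be differentiable in a merely smooth space.
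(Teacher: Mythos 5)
The paper offers no proof of this lemma --- it is quoted from Ibaraki--Takahashi --- so your proposal can only be judged on its own merits. Your part $(1)$ is fine: the identification $R=J^{-1}\Pi_{JC}J$ (via closedness and convexity of $JC$ and uniqueness of the retraction), the transported variational characterization of $\Pi_{JC}$, and the converse obtained by adding the two inequalities and invoking strict monotonicity of $J$ all go through.

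Part $(2)$ is where the proposal has a genuine gap. As you note, by (\ref{eq36}) the stated inequality is equivalent to $\langle z-Rx,\,J(Rx)-Jx\rangle\ge0$ for all $z\in C$, whereas part $(1)$ delivers $\langle x-Rx,\,J(Rx)-Jz\rangle\ge0$; these pairings are ``transposes'' of one another and coincide only in Hilbert space. Neither of your routes actually produces the first one. Route A: since the Lyapunov functional of $E^{*}$ satisfies $\phi_{E^{*}}(Ja,Jb)=\|a\|^{2}-2\langle b,Ja\rangle+\|b\|^{2}=\phi(b,a)$, Alber's Pythagorean estimate for $\Pi_{JC}$ transports to $\phi(Rx,z)+\phi(x,Rx)\le\phi(x,z)$ --- the arguments of $\phi$ come out in the opposite order, and because $\phi$ is not symmetric this is not the printed inequality (it is, via (\ref{eq36}), exactly equivalent to part $(1)$ with $y=z$). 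Route B: after dividing by $t$, the term $\tfrac{1}{t}\langle z-Rx,\,J(Rx)-J(Rx+t(x-Rx))\rangle$ converges, when it converges at all, to a directional derivative of $J$ at $Rx$, not to $\langle z-Rx,\,J(Rx)-Jx\rangle$; the increment of $J$ over the whole ray cannot be recovered from the infinitesimal one, so the limit does not extract the required variational inequality.

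In fact no argument can close this gap, because for $R=J^{-1}\Pi_{JC}J$ the inequality in the printed form is false in general. Take $E=(\mathbb{R}^{2},\|\cdot\|_{4/3})$, so $E^{*}=(\mathbb{R}^{2},\|\cdot\|_{4})$, and let $C=J^{-1}\{(s,1):s\in\mathbb{R}\}$. For $x=(0,0)$ one finds $Rx=(0,1)$, and for $z=J^{-1}(1,1)=(2^{-1/2},2^{-1/2})$ one computes $\phi(z,Rx)+\phi(Rx,x)=1+1=2>\sqrt{2}=\phi(z,x)$, while the transposed version gives $\phi(Rx,z)+\phi(x,Rx)=(\sqrt{2}-1)+1=\sqrt{2}=\phi(x,z)$. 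The correct form of assertion $(2)$ for the sunny generalized nonexpansive retraction is therefore $\phi(Rx,z)+\phi(x,Rx)\le\phi(x,z)$ --- which is precisely what your Route A proves, and which is equivalent to part $(1)$. So keep part $(1)$ and Route A, but state the conclusion of $(2)$ with the arguments of $\phi$ in the order dictated by the dual computation (the same transposition affects Lemma \ref{eq38} and the definition of generalized nonexpansiveness as reproduced in this paper).
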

\begin{lm}\cite{Kohsa}\label{eq38}
Let $E$ be a smooth, strictly convex and reflexive Banach space and $C$ be a nonempty closed sunny generalized nonexpansive retract of $E$. Let $R$ be the sunny generalized nonexpansive retraction from $E$ onto $C$ and $(x,z)\in E\times C$. Then the following are equivalent:
\begin{enumerate}
\item $z=Rx$,
\item $\phi(z,x)=\min_{y\in C}\phi(y,x)$
\end{enumerate}
\begin{lm}\cite{Ibaraki}\label{Ibaraki}
Let $E$ be a smooth, strictly convex and reflexive Banach space and let $C$ be a nonempty closed subset of $E$. Then the following are equivalent:
\begin{enumerate}
\item $C$ is a sunny generalized nonexpansive retract of $E$,
\item $C$ is a generalized retract of $E$,
\item $JC$ is closed and convex.
\end{enumerate}
\end{lm}
\end{lm}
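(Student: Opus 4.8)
The plan is to establish the cycle of implications $(1)\Rightarrow(2)\Rightarrow(3)\Rightarrow(1)$. The implication $(1)\Rightarrow(2)$ is immediate, since a sunny generalized nonexpansive retraction of $E$ onto $C$ is in particular a generalized nonexpansive retraction of $E$ onto $C$. For $(3)\Rightarrow(1)$ I would argue as follows: as $E$ is smooth, strictly convex and reflexive, so is $E^{*}$, and therefore the generalized metric projection $\Pi_{JC}:E^{*}\to JC$ onto the nonempty closed convex set $JC$ is well defined and single valued; put $R:=J^{-1}\Pi_{JC}J$. Since $J$ is a bijection of $E$ onto $E^{*}$, we have $J^{-1}(JC)=C$, so $R$ is a retraction of $E$ onto $C$, and by the result of Kohsaka and Takahashi quoted just above this lemma \cite{Kohsa}, applied with $C^{*}=JC$, the map $R$ is a sunny generalized nonexpansive retraction of $E$ onto $C$. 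Hence $C$ is a sunny generalized nonexpansive retract of $E$, and the cycle will close once $(2)\Rightarrow(3)$ is proved.

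So let $R:E\to C$ be a generalized nonexpansive retraction, so that $\phi(y,Rx)\le\phi(y,x)$ for all $x\in E$ and all $y\in C$. First I would show that $JC$ is closed. Let $Jy_{n}\in JC$ with $Jy_{n}\to y^{*}$ in $E^{*}$. Then $\|y_{n}\|=\|Jy_{n}\|$ is bounded, so by reflexivity some subsequence satisfies $y_{n_{k}}\rightharpoonup\bar y$ for a point $\bar y\in E$; passing to the limit in the monotonicity inequality $\langle y_{n_{k}}-v,\,Jy_{n_{k}}-Jv\rangle\ge 0$ and invoking maximal monotonicity of $J$ gives $y^{*}=J\bar y$, and in the course of this one also obtains $\|y_{n_{k}}\|\to\|\bar y\|$, hence $\phi(y_{n_{k}},\bar y)\to 0$ and therefore $0\le\phi(y_{n_{k}},R\bar y)\le\phi(y_{n_{k}},\bar y)\to 0$. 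Writing the three point identity \eqref{eq36} with $x=y_{n_{k}}$, $z=R\bar y$, $y=\bar y$ and letting $k\to\infty$ (recall $y_{n_{k}}\rightharpoonup\bar y$) yields $2\langle\bar y-R\bar y,\,J(R\bar y)-J\bar y\rangle=-\phi(R\bar y,\bar y)$; substituting this back into \eqref{eq36} with $x=y=\bar y$ and $z=R\bar y$ gives $\phi(\bar y,R\bar y)=0$, so $\bar y=R\bar y\in C$ by \eqref{equa21}, and thus $y^{*}=J\bar y\in JC$. Therefore $JC$ is closed.

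It remains to prove that $JC$ is convex, and this is the step I expect to be the main obstacle. Fix $Jx_{1},Jx_{2}\in JC$ and $t\in(0,1)$, put $w^{*}=tJx_{1}+(1-t)Jx_{2}$, $w=J^{-1}w^{*}$ and $z=Rw\in C$; it is enough to show $Jz=w^{*}$, that is, $w\in C$. The idea is to work in the dual space: for each fixed $y\in E$ the map $u^{*}\mapsto V(y,u^{*})=\|y\|^{2}-2\langle y,u^{*}\rangle+\|u^{*}\|^{2}=\phi(y,J^{-1}u^{*})$ is convex on $E^{*}$, since its only non-quadratic term is affine, and in fact it is \emph{strictly} convex, because smoothness of $E$ forces $E^{*}$ to be strictly convex and hence $\|\cdot\|^{2}$ to be strictly convex on $E^{*}$. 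Starting from the inequalities $\phi(y,z)\le\phi(y,w)$, that is $V(y,Jz)\le V(y,w^{*})$, which hold for \emph{every} $y\in C$, I would combine them with the three point identity \eqref{eq36} (which relates $\phi(y,w)$, $\phi(y,z)$ and $\phi(z,w)$), with the strict convexity of the functions $V(y,\cdot)$, and with the retraction identities $R^{2}=R$ and $R|_{C}=\mathrm{id}$, in order to force $Jz=w^{*}$. The delicate point is precisely that, in contrast with the closedness argument, no finite subfamily of the inequalities $V(y,Jz)\le V(y,w^{*})$ suffices, so the whole family indexed by $y\in C$ must be used, together with the strict geometry of $E$ and of $E^{*}$. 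Once $JC$ is known to be closed and convex we are back at condition $(3)$, which closes the cycle. In a Hilbert space this whole argument collapses to the classical fact that the metric projection onto $C$ exists precisely when $C$ is closed and convex, in agreement with the remark above.
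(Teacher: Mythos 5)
This statement is quoted from the literature (\cite{Kohsa}, \cite{Ibaraki}) and the paper supplies no proof, so your proposal can only be judged on its own merits. Two problems. First, the statement as printed actually contains two results: the outer equivalence of Kohsaka--Takahashi, namely $z=Rx\iff\phi(z,x)=\min_{y\in C}\phi(y,x)$, and the inner Ibaraki--Takahashi characterization of sunny generalized nonexpansive retracts. Your proposal addresses only the inner one; the outer equivalence (whose forward direction follows from Lemma \ref{lem4}(2), and whose converse needs uniqueness of the minimizer of $\phi(\cdot,x)$ on $C$, coming from strict convexity) is never touched.

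Second, and more seriously, within the inner lemma the implication $(2)\Rightarrow(3)$ is only half proved. The closedness of $JC$ is handled correctly (your detour through the three-point identity is unnecessary: $\phi(y_{n_k},R\bar y)\to\phi(\bar y,R\bar y)$ directly from $y_{n_k}\rightharpoonup\bar y$ and $\|y_{n_k}\|\to\|\bar y\|$), and $(1)\Rightarrow(2)$ and $(3)\Rightarrow(1)$ are fine. But the convexity of $JC$ --- the mathematical heart of the theorem --- is announced as ``the main obstacle'' and then replaced by a description of what you \emph{would} do. The sketched mechanism does not obviously close: the hypotheses $V(y,Jz)\le V(y,w^{*})$ for $y\in C$ reduce to $2\langle y,\,w^{*}-Jz\rangle\le\|w^{*}\|^{2}-\|Jz\|^{2}$, which is affine in $y$ and compares $V(y,\cdot)$ at only the two points $Jz$ and $w^{*}$, so strict convexity of $V(y,\cdot)$ along segments gives no leverage; in particular the choice $y=z$ yields only the tautology $\phi(z,w)\ge0$. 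The clean route to convexity of $JC$ uses the variational inequality $\langle x-Rx,\,Jy-JRx\rangle\le0$ of Lemma \ref{lem4}(1) (apply it at $y=x_{1}$ and $y=x_{2}$, take the convex combination, and invoke strict monotonicity of $J$ to get $w=Rw$), but that inequality is available only for \emph{sunny} generalized nonexpansive retractions, i.e.\ under $(1)$ rather than $(2)$; extracting it, or convexity, from a bare generalized nonexpansive retraction is precisely the nontrivial content of Ibaraki--Takahashi's argument and is missing here. As it stands the cycle $(1)\Rightarrow(2)\Rightarrow(3)\Rightarrow(1)$ is broken at its essential link.
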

\begin{lm}\cite{Yao}\label{eq6}
Let $E$ be a $2$-uniformly convex and smooth Banach space. Then, for all $x, y\in E$, we have $$\|x-y\|\leq\frac{2}{c^{2}}\|Jx-Jy\|,$$
where $\frac{1}{c}(0\leq c\leq 1)$ is the $2$-uniformly convex constant of $E$.
\end{lm}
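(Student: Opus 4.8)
The plan is to deduce the bound from a coercivity property of the normalized duality mapping together with the Cauchy--Schwarz inequality for the pairing between $E$ and $E^{*}$; the only genuine input from the geometry of $E$ will be a quadratic lower bound for the Lyapunov functional $\phi$.

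First I would record the geometric fact underlying the $2$-uniform convexity hypothesis: with $1/c$ denoting the $2$-uniformly convex constant as in the statement, one has
\[
\phi(x,y)\ \ge\ \frac{c^{2}}{2}\,\|x-y\|^{2}\qquad\text{for all }x,y\in E .
\]
This is the standard characterization of $2$-uniformly convex Banach spaces and is the content of the reference \cite{Yao}; it is obtained by applying the definition of the modulus of convexity $\delta_{E}$ to the normalized vectors $x/\|x\|$ and $y/\|y\|$, invoking $\delta_{E}(\epsilon)\ge c\,\epsilon^{2}$, and homogenizing in $\|x\|,\|y\|$. This estimate is the only step of the argument that is not purely formal, so I expect it to be the main obstacle; since the lemma is quoted from \cite{Yao}, one may simply invoke it.

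Next I would symmetrize. Putting $z=x$ and $w=y$ in the identity \eqref{equa6} and using $\phi(x,x)=\phi(y,y)=0$ yields
\[
\phi(x,y)+\phi(y,x)=2\langle x-y,\,Jx-Jy\rangle .
\]
Since $\|x-y\|=\|y-x\|$, adding the two instances of the quadratic bound above (one for $\phi(x,y)$ and one for $\phi(y,x)$) gives the coercivity estimate
\[
\langle x-y,\,Jx-Jy\rangle\ \ge\ \frac{c^{2}}{2}\,\|x-y\|^{2}.
\]

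Finally, I would combine this with the duality inequality $\langle x-y,\,Jx-Jy\rangle\le\|x-y\|\,\|Jx-Jy\|$ to obtain $\frac{c^{2}}{2}\|x-y\|^{2}\le\|x-y\|\,\|Jx-Jy\|$. The asserted inequality is trivial when $x=y$; when $x\neq y$, dividing by $\|x-y\|>0$ gives $\frac{c^{2}}{2}\|x-y\|\le\|Jx-Jy\|$, i.e. $\|x-y\|\le\frac{2}{c^{2}}\|Jx-Jy\|$, as required. (An alternative route is dual: $2$-uniform convexity of $E$ makes $E^{*}$ $2$-uniformly smooth, hence $J^{-1}=J^{*}$ is Lipschitz with constant $2/c^{2}$, and then $\|x-y\|=\|J^{*}Jx-J^{*}Jy\|\le\frac{2}{c^{2}}\|Jx-Jy\|$.)
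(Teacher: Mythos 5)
The paper offers no proof of this lemma at all---it is quoted verbatim from \cite{Yao} (Xu's inequalities paper)---so there is no internal argument to compare against; what you give is a correct and essentially standard derivation. Your reduction is sound: setting $z=x$, $w=y$ in the identity (2.7) does give $2\langle x-y,Jx-Jy\rangle=\phi(x,y)+\phi(y,x)$, and combining the quadratic lower bound $\phi(x,y)\geq\frac{c^{2}}{2}\|x-y\|^{2}$ with $\langle x-y,Jx-Jy\rangle\leq\|x-y\|\,\|Jx-Jy\|$ yields exactly the stated estimate. You correctly isolate the one non-formal ingredient: the lower bound on $\phi$, which is Xu's characterization of $2$-uniform convexity ($\|x+y\|^{2}\geq\|x\|^{2}+2\langle y,Jx\rangle+\frac{c^{2}}{2}\|y\|^{2}$, equivalently $\phi(y,x)\geq\frac{c^{2}}{2}\|y-x\|^{2}$ after the substitution $y\mapsto y-x$); note this is also the $p=2$ case of the paper's own Lemma 2.10, except that Lemma 2.10 only asserts an unspecified constant $\tau>0$, so to get the explicit factor $2/c^{2}$ you genuinely need the normalized form from \cite{Yao} rather than the qualitative statement. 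The only caveat is bookkeeping: the phrase ``$\frac{1}{c}$ is the $2$-uniformly convex constant'' fixes a particular normalization, and your final constant $2/c^{2}$ is correct precisely under the convention that the coercivity constant is $c^{2}/2$; you flag this, which is appropriate. Your alternative dual route ($E^{*}$ is $2$-uniformly smooth, so $J^{*}$ is Lipschitz with the same constant) is also valid and is in fact how some references state the result.
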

\begin{lm}\label{lem16}\cite{ch}
Suppose $p>1$ is a real number, then the following are equivalent
\begin{enumerate}
\item[(i)] $E$ is a $p$-uniformly convex Banach space,
\item[(ii)] there exits $\tau>0$, such that for each $f_{x}\in J_{p}(x)$ and $f_{y}\in J_{p}(y)$, we have
$$\langle x-y,f_{x}-f_{y}\rangle\geq\tau\|x-y\|^{p}.$$
\end{enumerate}
Where $J_{p}(x)=\{x^{*}\in E^{*}: \langle x,x^{*}\rangle=\|x\|\|x^{*}\|, \: \|x^{*}\|=\|x\|^{p-1}\}$.
\end{lm}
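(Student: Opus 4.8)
The final statement is a standard characterization of $p$-uniform convexity (quoted here as \cite{ch}), and the plan is to funnel both conditions through the ``$p$-strong convexity'' of the power functional $\varphi_{p}(x):=\frac1p\|x\|^{p}$, whose subdifferential is exactly $J_{p}$. Concretely, I would prove that (i) and (ii) are each equivalent to the existence of a constant $K>0$ such that
\begin{equation}\label{eq:pstrconv}
\tfrac1p\|x\|^{p}\ \geq\ \tfrac1p\|y\|^{p}+\langle x-y,f_{y}\rangle+\tfrac{K}{p}\|x-y\|^{p}
\end{equation}
for all $x,y\in E$ and all $f_{y}\in J_{p}(y)$ (using that $J_{p}(y)=\partial\varphi_{p}(y)$).

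First, \eqref{eq:pstrconv} yields (ii) at once: write \eqref{eq:pstrconv} as stated and then with the roles of $x$ and $y$ interchanged (with some $f_{x}\in J_{p}(x)$), and add the two inequalities; the power terms cancel and one is left with $\langle x-y,f_{x}-f_{y}\rangle\geq\tau\|x-y\|^{p}$ with $\tau=2K/p$. Conversely, assume (ii). Fix $x,y$ and $f_{y}\in J_{p}(y)$, put $z_{t}:=y+t(x-y)$ for $t\in[0,1]$ and pick $f_{t}\in J_{p}(z_{t})$; applying (ii) to the pair $(z_{t},y)$ and using $z_{t}-y=t(x-y)$ gives $\langle x-y,f_{t}-f_{y}\rangle\geq\tau t^{p-1}\|x-y\|^{p}$. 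Since $\varphi_{p}$ is convex and finite on the segment $[y,x]$, it is differentiable there for a.e.\ $t$ and $\tfrac1p\|x\|^{p}-\tfrac1p\|y\|^{p}=\int_{0}^{1}\langle x-y,f_{t}\rangle\,dt$; subtracting $\langle x-y,f_{y}\rangle$ and integrating the previous bound produces \eqref{eq:pstrconv} with $K=\tau$.

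It remains to tie \eqref{eq:pstrconv} to the modulus condition (i). If \eqref{eq:pstrconv} holds, take $u,v\in S(E)$ with $\|u-v\|\geq\epsilon$, set $m:=\frac{u+v}{2}$, choose $f_{m}\in J_{p}(m)$, and apply \eqref{eq:pstrconv} to $(u,m)$ and to $(v,m)$; adding and using $u-m=-(v-m)=\frac{u-v}{2}$ cancels the linear terms and gives $1-\|m\|^{p}\geq\tfrac{\tau}{2^{p}}\epsilon^{p}$. Because $\|m\|\leq1$ one has $1-\|m\|^{p}\leq p(1-\|m\|)$, hence $\delta_{E}(\epsilon)\geq\tfrac{\tau}{p2^{p}}\epsilon^{p}$, i.e.\ $E$ is $p$-uniformly convex. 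Conversely, starting from $\delta_{E}(\epsilon)\geq c\epsilon^{p}$, a homogenization argument (rescaling two arbitrary points onto the unit sphere and invoking the definition of $\delta_{E}$) produces the power-type parallelogram inequality $\|\tfrac{x+y}{2}\|^{p}+c_{1}\|x-y\|^{p}\leq\tfrac12\|x\|^{p}+\tfrac12\|y\|^{p}$, and a standard convexity/subdifferential computation converts this into \eqref{eq:pstrconv}.

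The main obstacle, and the step I would spend the most care on, is this last implication: the homogenization of the modulus inequality (tracking how $\delta_{E}$ controls the defect $1-\|\tfrac{x+y}{2}\|$ after rescaling points of unequal norm) is genuinely technical. A secondary delicate point is the integral representation $\tfrac1p\|x\|^{p}-\tfrac1p\|y\|^{p}=\int_{0}^{1}\langle x-y,f_{t}\rangle\,dt$ in the non-smooth case, where $J_{p}$ is set-valued; this is handled by the mean value theorem for convex functions (differentiability a.e.\ along a line together with monotonicity of the subdifferential selections), so neither separability nor smoothness of $E$ is actually required.
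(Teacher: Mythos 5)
The paper offers no proof of this lemma at all: it is imported verbatim from Chidume's monograph \cite{ch}, so there is nothing internal to compare your argument against. Judged on its own terms, your outline is the standard proof of this characterization (it is essentially Theorem 1 and its corollaries in Xu's paper \cite{Yao}, reproduced in \cite{ch}): identifying $J_{p}$ with $\partial\bigl(\tfrac1p\|\cdot\|^{p}\bigr)$, and routing both (i) and (ii) through the strong-convexity inequality for $\tfrac1p\|\cdot\|^{p}$. The steps you actually carry out are correct: the symmetrization giving (ii) with $\tau=2K/p$, the integration along the segment using a.e.\ differentiability of the convex function $t\mapsto\tfrac1p\|z_{t}\|^{p}$ (and the fact that at points of differentiability $\langle x-y,f_{t}\rangle$ does not depend on the selection $f_{t}$), and the midpoint argument recovering $\delta_{E}(\epsilon)\geq\tfrac{K}{p2^{p}}\epsilon^{p}$ via $1-t^{p}\leq p(1-t)$.

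The one place where your proof is not self-contained is exactly the place you flag: the implication from $\delta_{E}(\epsilon)\geq c\epsilon^{p}$ to the power-type inequality $\|\tfrac{x+y}{2}\|^{p}+c_{1}\|x-y\|^{p}\leq\tfrac12\|x\|^{p}+\tfrac12\|y\|^{p}$ for points of \emph{unequal} norm. This cannot be dispatched by simply rescaling $x$ and $y$ onto the unit sphere: the definition of $\delta_{E}$ only controls pairs of vectors in the unit ball, and after normalizing $x/\|x\|$, $y/\|y\|$ one loses control of both $\|x-y\|$ and the midpoint defect. The standard remedy is Figiel-type monotonicity of $\epsilon\mapsto\delta_{E}(\epsilon)/\epsilon$ (or an equivalent comparison lemma), followed by a nontrivial computation; this is the actual content of Xu's Theorem 1 and occupies most of his proof. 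So your proposal is the right proof with the right skeleton, but this step is a genuine omission rather than a routine verification, and a complete write-up would need to either prove it or cite it explicitly as the known inequality of \cite{Yao}.
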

\begin{lm}\label{kamimra}\cite{Kamimura}
Let $E$ be a uniformly convex Banach space and let $r>0$. Then there exists a strictly increasing, continuous and convex function $g:[0, 2r]\rightarrow [0, \infty)$, $g(0)=0$ such that $$g(\|x-y\|)\leq\phi(x,y),$$ for all $x,y\in B_{r}(0)=\{z\in E:\|z\|\leq r\}$.
\end{lm}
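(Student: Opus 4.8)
The plan is to push all use of uniform convexity into one compactness-free estimate and then obtain $g$ by a soft envelope construction. The crucial intermediate step I would prove first is: if $\{x^{k}\},\{y^{k}\}\subset B_{r}(0)$ and $\phi(x^{k},y^{k})\to 0$, then $\|x^{k}-y^{k}\|\to 0$. Granting this, I would set, for $t\in[0,2r]$,
$$g_{0}(t)=\inf\big\{\phi(x,y):x,y\in B_{r}(0),\ \|x-y\|\ge t\big\},$$
and take $g$ to be the largest convex minorant of $g_{0}$ on $[0,2r]$; the proof then reduces to checking that $g$ is strictly increasing, continuous, convex, vanishes at $0$, and satisfies $g\le g_{0}$, whence $g(\|x-y\|)\le g_{0}(\|x-y\|)\le\phi(x,y)$ for all $x,y\in B_{r}(0)$.

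To establish the intermediate step, I would start from $(\|x\|-\|y\|)^{2}\le\phi(x,y)$ (inequality (\ref{eq14})) to get $\|x^{k}\|-\|y^{k}\|\to 0$, and then argue by contradiction: if the conclusion fails, pass to a subsequence with $\|x^{k}-y^{k}\|\ge\varepsilon_{0}>0$ and $\|x^{k}\|\to a$, hence also $\|y^{k}\|\to a$, for some $a\in[0,r]$. The case $a=0$ is impossible since $\|x^{k}-y^{k}\|\le\|x^{k}\|+\|y^{k}\|\to 0$, so $a>0$. Writing $\phi(x^{k},y^{k})=\|x^{k}\|^{2}+\|y^{k}\|^{2}-2\langle x^{k},Jy^{k}\rangle$ gives $\langle x^{k},Jy^{k}\rangle\to a^{2}$, so, $J$ being positively homogeneous, the unit vectors $u^{k}=x^{k}/\|x^{k}\|$, $v^{k}=y^{k}/\|y^{k}\|$ satisfy $\langle u^{k},Jv^{k}\rangle\to 1$, and therefore
$$2\ \ge\ \|u^{k}+v^{k}\|\ \ge\ \langle u^{k}+v^{k},Jv^{k}\rangle\ =\ \langle u^{k},Jv^{k}\rangle+1\ \longrightarrow\ 2,$$
so $\|u^{k}+v^{k}\|\to 2$ with $\|u^{k}\|=\|v^{k}\|=1$. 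Uniform convexity of $E$ (the contrapositive of $\delta_{E}(\varepsilon)>0$ for $\varepsilon>0$) then forces $\|u^{k}-v^{k}\|\to 0$, and together with $\|x^{k}\|-\|y^{k}\|\to 0$ and boundedness this gives $\|x^{k}-y^{k}\|\to 0$, contradicting $\|x^{k}-y^{k}\|\ge\varepsilon_{0}$.

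For the properties of $g$ I would argue as follows. The constraint set defining $g_{0}$ shrinks as $t$ increases, so $g_{0}$ is nondecreasing; $g_{0}(0)=0$ by taking $x=y$; and $g_{0}(t)>0$ for $t>0$, since a violation would produce sequences contradicting the intermediate step. Passing to the convex minorant $g$ keeps $g(0)=0$ and monotonicity (rescale by the factor $t_{1}/t_{2}$ any two-point convex combination representing $g(t_{2})$), and a short quantitative estimate — splitting a representing combination $\lambda a+(1-\lambda)b=t$ according to the sizes of $1-\lambda$ and of $a$, and using that $g_{0}$ is nondecreasing and positive on $(0,2r]$ — shows $g(t)>0$ for $t>0$. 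A convex nondecreasing function on $[0,2r]$ that vanishes at $0$ and is positive on $(0,2r]$ is automatically strictly increasing, by monotonicity of difference quotients; convexity yields continuity on $(0,2r)$; the squeeze $0\le g\le g_{0}$ with $g_{0}(t)\to 0$ as $t\to 0^{+}$ (let $y\to x$ inside $B_{r}(0)$ and use continuity of $\phi(\cdot,y)$) yields continuity at $0$; and replacing $g(2r)$ by $\lim_{t\to 2r^{-}}g(t)$ — which remains $\le g_{0}(2r)$ and keeps $g$ convex — yields continuity at $2r$. Then $g$ has all the required properties and $g(\|x-y\|)\le g_{0}(\|x-y\|)\le\phi(x,y)$ on $B_{r}(0)$.

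The main obstacle is the intermediate step: in finite dimensions it is immediate from compactness of $B_{r}(0)$ together with $\phi(x,y)=0\Leftrightarrow x=y$ (from (\ref{equa21})), but in general it is precisely the point where uniform convexity of $E$ is indispensable. Once that step is available, the envelope construction and the verification of monotonicity, positivity, continuity and convexity of $g$ are routine.
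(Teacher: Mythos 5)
Your argument is correct, but it is genuinely different from the source of this lemma: the paper offers no proof and simply cites Kamimura--Takahashi, where the inequality $g(\|x-y\|)\leq\phi(x,y)$ is deduced from Xu's two--point characterization of uniform convexity (reference \cite{Yao} here), namely $\|\tfrac{x+y}{2}\|^{2}\leq\tfrac12\|x\|^{2}+\tfrac12\|y\|^{2}-\tfrac14 g(\|x-y\|)$ on $B_{r}(0)$, combined with the elementary bound $\langle x,Jy\rangle\leq\|\tfrac{x+y}{2}\|^{2}$, which gives $\phi(x,y)\geq\tfrac12 g(\|x-y\|)$ at once. You instead isolate the sequential statement ``$\phi(x^{k},y^{k})\to0$ on $B_{r}(0)$ implies $\|x^{k}-y^{k}\|\to0$'' (which is precisely Lemma \ref{lem5}, there usually derived \emph{from} the present lemma — so your logical order is reversed relative to the literature), prove it directly by normalizing to the unit sphere and invoking $\delta_{E}(\epsilon)>0$, and then manufacture $g$ as the convex envelope of $g_{0}(t)=\inf\{\phi(x,y):\|x-y\|\geq t\}$. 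All the delicate points are handled: positivity of the envelope at $t>0$ via the ``$\lambda\to1$, $a\to t$'' splitting, strict monotonicity from convexity plus $g(0)=0<g(t)$, and the possible jump of a convex function at the right endpoint $2r$. The trade-off is that Xu's route hands you $g$ ready-made with all its properties certified, while your route is self-contained modulo only the definition of the modulus of convexity, at the cost of the envelope bookkeeping.

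Two small points to tighten. First, as stated the lemma assumes only uniform convexity, but $\phi$ (and your use of a single-valued $J$ with $J(\lambda y)=\lambda Jy$) requires $E$ smooth; this hypothesis is implicit throughout the paper and in the cited source, so you should state that you use it. Second, your justification that $g_{0}(t)\to0$ as $t\to0^{+}$ (``let $y\to x$'') is loose because the competitors must satisfy $\|x-y\|\geq t$ exactly; the clean fix is to take collinear points $x=su$, $y=(s+t)u$ with $\|u\|=1$, for which $\phi(x,y)=t^{2}$, giving $g_{0}(t)\leq t^{2}$ directly.
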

\begin{lm}\cite{Cho}\label{lem15}
Let $E$ be a uniformly convex Banach space. Then there exists a continuous strictly increasing convex function $g:[0, 2r]\rightarrow [0, \infty)$, $g(0)=0$ such that $$\|\lambda x+\mu y+\gamma z\|^{2}\leq \lambda\|x\|^{2}+\mu\|y\|^{2}+\gamma\|z\|^{2}-\lambda\mu g(\|x-y\|),$$
for all $x,y,z\in B_{r}(0)=\{z\in E:\|z\|\leq r\}$ and all $\lambda, \mu, \gamma\in [0, 1]$ with $\lambda+\mu+\gamma=1$.
\end{lm}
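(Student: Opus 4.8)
The plan is to obtain the three–term inequality from the well–known two–term version together with an elementary grouping argument, so that a single function $g$ serves in both. Concretely, I would first record the standard consequence of uniform convexity: for each $r>0$ there is a continuous, strictly increasing, convex function $g:[0,2r]\to[0,\infty)$ with $g(0)=0$ such that
$$\|\alpha x+(1-\alpha)y\|^{2}\leq\alpha\|x\|^{2}+(1-\alpha)\|y\|^{2}-\alpha(1-\alpha)\,g(\|x-y\|)$$
for all $x,y\in B_{r}(0)$ and all $\alpha\in[0,1]$. This is proved in the spirit of Lemma \ref{kamimra}: one controls $\|\alpha x+(1-\alpha)y\|$ from above by means of the modulus of convexity $\delta_{E}$ (which is strictly positive off $0$ by uniform convexity), and then passes to a convex minorant to secure monotonicity and convexity of $g$. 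I claim that this same $g$ works in the assertion of the lemma.

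For the reduction, note first that if $\lambda+\mu=0$ then $\lambda=\mu=0$ and $\gamma=1$, so both sides equal $\|z\|^{2}$ and there is nothing to prove; hence assume $\lambda+\mu>0$. Set
$$u=\frac{\lambda}{\lambda+\mu}\,x+\frac{\mu}{\lambda+\mu}\,y,$$
which lies in $B_{r}(0)$ because it is a convex combination of $x,y\in B_{r}(0)$; this is what lets me reuse the same $g$ below. Applying the two–term inequality to $x,y$ with weight $\frac{\lambda}{\lambda+\mu}$ bounds $\|u\|^{2}$, while applying plain convexity of $\|\cdot\|^{2}$ (equivalently, the two–term inequality with its nonpositive correction term discarded) to the pair $u,z$ with weights $\lambda+\mu$ and $\gamma$ gives $\|\lambda x+\mu y+\gamma z\|^{2}=\|(\lambda+\mu)u+\gamma z\|^{2}\leq(\lambda+\mu)\|u\|^{2}+\gamma\|z\|^{2}$. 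Combining the two estimates and simplifying yields
$$\|\lambda x+\mu y+\gamma z\|^{2}\leq\lambda\|x\|^{2}+\mu\|y\|^{2}+\gamma\|z\|^{2}-\frac{\lambda\mu}{\lambda+\mu}\,g(\|x-y\|).$$

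To finish, observe that $0<\lambda+\mu\leq1$ and $g\geq0$ force $\frac{\lambda\mu}{\lambda+\mu}\,g(\|x-y\|)\geq\lambda\mu\,g(\|x-y\|)$, while $\|x-y\|\leq\|x\|+\|y\|\leq2r$ keeps the argument inside the domain of $g$; substituting gives precisely
$$\|\lambda x+\mu y+\gamma z\|^{2}\leq\lambda\|x\|^{2}+\mu\|y\|^{2}+\gamma\|z\|^{2}-\lambda\mu\,g(\|x-y\|).$$
The degenerate subcases $\lambda=0$ or $\mu=0$ (with $\lambda+\mu>0$) need no separate treatment: there $\lambda\mu=0$ and mere convexity of $\|\cdot\|^{2}$ already delivers the claim. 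The only genuine difficulty is the first step — producing a single $g$ that is simultaneously valid for \emph{all} $\alpha\in[0,1]$ and \emph{all} pairs in $B_{r}(0)$, convex and strictly increasing; once that is in hand, everything after it is bookkeeping.
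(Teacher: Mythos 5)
Your argument is correct, but it is not what the paper does: the paper offers no proof of this lemma at all, importing it wholesale from \cite{Cho}. What you give instead is a genuine derivation of the three-term inequality from the classical two-term one (which is Xu's inequality, essentially the content of \cite{Yao}, already used elsewhere in this paper): writing $\lambda x+\mu y+\gamma z=(\lambda+\mu)u+\gamma z$ with $u=\frac{\lambda}{\lambda+\mu}x+\frac{\mu}{\lambda+\mu}y$, using plain convexity of $\|\cdot\|^{2}$ on the outer combination and the two-term inequality on the inner one, you land on the correction term $\frac{\lambda\mu}{\lambda+\mu}\,g(\|x-y\|)$, which dominates $\lambda\mu\,g(\|x-y\|)$ because $\lambda+\mu\leq 1$; the degenerate cases $\lambda+\mu=0$ and $\lambda\mu=0$ are handled correctly, and $\|x-y\|\leq 2r$ keeps the argument of $g$ in its domain. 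This buys a clean reduction showing the three-term statement carries no information beyond the two-term one, and it reuses a single $g$ depending only on $r$, which is exactly what the applications in Theorems \ref{eq25} and \ref{thm1} need. The one place where you lean on unproved material is the two-term inequality itself: your sketch (bound $\|\alpha x+(1-\alpha)y\|$ via the modulus of convexity $\delta_{E}$, then pass to a convex minorant to get a single continuous, strictly increasing, convex $g$ valid uniformly in $\alpha$ and in the pair) names the right ingredients but omits the actual construction, which is the only nontrivial analytic content here; since that step is a standard cited result (\cite{Yao}) and you flag it explicitly rather than hand-wave past it, this is an acceptable dependency rather than a gap.
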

\begin{lm}\cite{Kamimura}\label{lem5}
Let $E$ be a uniformly convex and smooth Banach space and let $\{x^{k}\}$ and $\{y^{k}\}$ be two sequences of $E$. If $\phi(x^{k}, y^{k})\rightarrow 0$ and either $\{x^{k}\}$ or $\{y^{k}\}$ is bounded, then $x^{k}-y^{k}\rightarrow 0.$
\end{lm}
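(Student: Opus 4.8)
The plan is to reduce everything to the auxiliary function $g$ supplied by Lemma~\ref{kamimra}. First I would use the elementary estimate $(\ref{eq14})$, namely $(\|x^{k}\|-\|y^{k}\|)^{2}\le\phi(x^{k},y^{k})$, to deduce from $\phi(x^{k},y^{k})\rightarrow0$ that $\big|\,\|x^{k}\|-\|y^{k}\|\,\big|\rightarrow0$. Consequently, if one of the two sequences --- say $\{x^{k}\}$ --- is bounded, then $\{y^{k}\}$ is bounded as well, so there is an $r>0$ with $\|x^{k}\|\le r$ and $\|y^{k}\|\le r$ for every $k$; that is, both sequences lie in the closed ball $B_{r}(0)$.

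Next I would invoke Lemma~\ref{kamimra}: since $E$ is uniformly convex, there is a strictly increasing, continuous and convex function $g:[0,2r]\rightarrow[0,\infty)$ with $g(0)=0$ such that $g(\|x-y\|)\le\phi(x,y)$ for all $x,y\in B_{r}(0)$. Applying this with $x=x^{k}$ and $y=y^{k}$ gives $g(\|x^{k}-y^{k}\|)\le\phi(x^{k},y^{k})$, and letting $k\rightarrow\infty$ together with $\phi(x^{k},y^{k})\rightarrow0$ yields $g(\|x^{k}-y^{k}\|)\rightarrow0$.

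Finally I would exploit the strict monotonicity of $g$ to pass from $g$ back to the norm. Suppose, for contradiction, that $\|x^{k}-y^{k}\|\not\rightarrow0$. Then there exist $\varepsilon>0$ and a subsequence with $\|x^{k_{j}}-y^{k_{j}}\|\ge\varepsilon$; since $g$ is strictly increasing with $g(0)=0$ we have $g(\varepsilon)>0$, hence $g(\|x^{k_{j}}-y^{k_{j}}\|)\ge g(\varepsilon)>0$ for all $j$, contradicting $g(\|x^{k}-y^{k}\|)\rightarrow0$. Therefore $\|x^{k}-y^{k}\|\rightarrow0$, as claimed.

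Concerning where the work really lies: there is no serious obstacle once Lemma~\ref{kamimra} is at hand --- the only point needing a little care is the reduction that \emph{both} sequences are bounded (not merely one of them), which is exactly what $(\ref{eq14})$ provides; everything after that is a routine squeeze argument based on the qualitative properties of $g$.
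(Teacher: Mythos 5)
Your proof is correct and is essentially the standard argument for this result (the paper itself gives no proof, deferring to the citation \cite{Kamimura}, where Proposition 2 is proved in exactly this way): the bound $(\ref{eq14})$ transfers boundedness from one sequence to the other, and then Lemma \ref{kamimra} together with the strict monotonicity and continuity of $g$ at $0$ forces $\|x^{k}-y^{k}\|\rightarrow 0$. No gaps; the only delicate point, ensuring both sequences lie in a common ball $B_{r}(0)$ so that $\|x^{k}-y^{k}\|\leq 2r$ stays in the domain of $g$, is handled correctly.
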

\begin{lm}\label{lem10}\cite{ka}
Let $\{\alpha_{n}\}$ and $\{\beta_{n}\}$ be two positive and bounded sequences in $\mathbb{R}$, then
\begin{equation*}
\begin{aligned}
(\liminf_{n\rightarrow \infty}\alpha_{n})\times(\liminf_{n\rightarrow\infty}\beta_{n})&\leq\liminf_{n\rightarrow\infty}(\alpha_{n}\beta_{n})\\
&\leq(\liminf_{n\rightarrow\infty}\alpha_{n})\times(\limsup_{n\rightarrow\infty}\beta_{n})\\
&\leq\limsup_{n\rightarrow\infty}(\alpha_{n}\beta_{n})\\
&\leq(\limsup_{n\rightarrow\infty}\alpha_{n})\times(\limsup_{n\rightarrow\infty}\beta_{n}).
\end{aligned}
\end{equation*}
\end{lm}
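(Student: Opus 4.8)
The plan is to prove the four‑term chain link by link. Set $a=\liminf_{n}\alpha_{n}$, $A=\limsup_{n}\alpha_{n}$, $b=\liminf_{n}\beta_{n}$, $B=\limsup_{n}\beta_{n}$; since both sequences are positive and bounded, all four numbers are finite and nonnegative, and $\{\alpha_{n}\beta_{n}\}$ is again positive and bounded, so every $\liminf$/$\limsup$ appearing in the statement is a finite real number. The two outer inequalities $ab\leq\liminf_{n}(\alpha_{n}\beta_{n})$ and $\limsup_{n}(\alpha_{n}\beta_{n})\leq AB$ I would obtain from ``eventual'' one‑sided estimates: for $\varepsilon>0$ there is $N$ with $\alpha_{n}>a-\varepsilon$ and $\beta_{n}>b-\varepsilon$ for all $n\geq N$, and, after shrinking $\varepsilon$ so that both lower bounds are nonnegative (the case $a=0$ or $b=0$ being trivial because the products are positive), multiplying these two inequalities — which is legitimate precisely because both sides are nonnegative — gives $\alpha_{n}\beta_{n}>(a-\varepsilon)(b-\varepsilon)$ for $n\geq N$; passing to $\liminf$ and letting $\varepsilon\downarrow0$ yields the first inequality. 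The mirror‑image argument with the upper bounds $\alpha_{n}<A+\varepsilon$, $\beta_{n}<B+\varepsilon$, where positivity is again what lets us multiply the inequalities, yields the last one.

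For the two middle inequalities I would pass to subsequences. To prove $\liminf_{n}(\alpha_{n}\beta_{n})\leq aB$, choose a subsequence $\{\alpha_{n_{k}}\}$ with $\alpha_{n_{k}}\to a$; the bounded sequence $\{\beta_{n_{k}}\}$ then has a further subsequence with $\beta_{n_{k_{j}}}\to\ell$, and $\ell$, being a subsequential limit of $\{\beta_{n}\}$, satisfies $0\leq\ell\leq B$. Hence $\alpha_{n_{k_{j}}}\beta_{n_{k_{j}}}\to a\ell$, so $a\ell$ is a subsequential limit of $\{\alpha_{n}\beta_{n}\}$, and since $\liminf$ does not exceed any subsequential limit we get $\liminf_{n}(\alpha_{n}\beta_{n})\leq a\ell\leq aB$, the last step using $a\geq0$. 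The remaining inequality $aB\leq\limsup_{n}(\alpha_{n}\beta_{n})$ follows by the symmetric construction: take $\{\beta_{m_{k}}\}$ with $\beta_{m_{k}}\to B$, extract a subsequence with $\alpha_{m_{k_{j}}}\to\ell'\geq a$, and observe that $\ell'B$ is a subsequential limit of $\{\alpha_{n}\beta_{n}\}$, so $aB\leq\ell'B\leq\limsup_{n}(\alpha_{n}\beta_{n})$ since $B\geq0$. Concatenating the four steps gives the stated chain.

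I do not expect a genuine obstacle here; the lemma is an elementary fact about $\liminf$ and $\limsup$ of products. The only points that need care are (i) invoking the positivity of the sequences every time two one‑sided inequalities are multiplied together, so that the direction of the inequality is preserved, and (ii) disposing of the degenerate cases $a=0$, $b=0$, or $B=0$ separately, in which the relevant products vanish and the corresponding inequalities hold trivially.
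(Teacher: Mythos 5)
Your proof is correct and complete: the two outer inequalities follow from the eventual two-sided bounds (with positivity justifying the multiplication of inequalities and the degenerate cases $a=0$, $b=0$ handled separately), and the two middle ones from the subsequence extraction, using that $\liminf$ (resp.\ $\limsup$) is the smallest (resp.\ largest) subsequential limit. The paper itself gives no proof of this lemma --- it is simply quoted from the reference [ka] --- so there is nothing to compare against; your argument is the standard elementary one and can stand as a self-contained justification.
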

%%%%%%%%%%%%%%%%%%%%%%%%%%%%%%%%%%%%%%%%%%%%%%%%%%%%%%%%%%%%%%%%%%%%%%%%%%%%%%%%%%%%%%%%%%%%%%%%%%%%%%%
\section{An Extragradient Algorithm}
In this section, we present an algorithm for finding a solution of the ($EP$) which
is also the common element of the set of fixed points of a relatively nonexpansive mapping.\\
Here, we assume that bifunction $f:C\times C\rightarrow\mathbb{R}$ satisfies in following conditions which $C$ is nonempty, convex and closed subset of uniformly convex and uniformly
smooth Banach space $E$,
 \begin{enumerate}
\item[(A1)]$f(x,x)=0$ for all $x\in C,$
\item[(A2)]$f$ is pseudomonotone on $C$, i.e., $f(x,y)\geq0\Longrightarrow f(y,x)\leq0$ for all $x,y\in C,$
\item[(A3)]$f$ is jointly weakly continuous on $C\times C$, i.e., if $x,y\in C$ and $\{x_{n}\}$ and $\{y_{n}\}$ are two sequences in $C$ converging weakly to $x$ and $y$, respectively, then $f(x_{n},y_{n})\rightarrow f(x,y),$
\item[(A4)]$f(x,.)$ is convex, lower semicontinuous and subdifferentiable on $C$ for every $x\in C,$
\item[(A5)]$f$ satisfies $\phi$-Lipschitz-type condition: $\exists c_{1}>0, \exists c_{2}>0,$ such that for every $x,y,z\in C$
\begin{equation}\label{equa17}
 f(x,y)+f(y,z)\geq f(x,z)-c_{1}\phi(y,x)-c_{2}\phi(z,y).
 \end{equation} \end{enumerate}
It is easy to see that if $f$ satisfies the properties $(A_{1})-(A_{4})$, then the set $E(f)$ of solutions of an equilibrium problem is closed and convex. Indeed, when $E$ is a Hilbert space, $\phi$-Lipschitz-type condition reduces to Lipschitz-type condition (\ref{equ.2}).\\
Throughout the paper $S$ is a relatively nonexpansive self-mapping of $C$.
\par
%%%%%%%%%%%%%%%%%%%%%%%%%%%%%%%%%%%%%%%%%%%%%%%%%%%%%%%%%%%%%%%%%%%%%%%%%%%%%%%%%%%%%%%%%%%%%%%%%%%%%%%%
{\bf Algorithm $1$}
\begin{description}
\item[Step 0.] Suppose that $\{\alpha_{n}\}\subseteq [a,e]$ for some $0<a<e<1$, $\{\beta_{n}\}\subseteq [d,b]$ for some $0<d<b<1$ and $\{\lambda_{n}\}\subseteq (0,1]$ such that $\{\lambda_{n}\}\subseteq [\lambda_{min},\lambda_{max}]$, where
 $$0<\lambda_{min}\leq\lambda_{max}<\min\{\frac{1}{2c_{1}},\frac{1}{2c_{2}}\}.$$
\item[Step 1.] Let $x_{0}\in C$. Set n=0.\\
\item[Step 2.] Compute $y_{n}$ and $x_{n}$, such that
$$y_{n}=\arg\min_{y\in C}\{\lambda_{n}f(x_{n},y)+\frac{1}{2}\phi(y,x_{n})\},$$
$$z_{n}=\arg\min_{y\in C}\{\lambda_{n}f(y_{n},y)+\frac{1}{2}\phi(y,x_{n})\}.$$
\item[Step 3.] Compute $t_{n}=J^{-1}(\alpha_{n}Jx_{n}+(1-\alpha_{n})(\beta_{n}Jz_{n}+(1-\beta_{n})JSz_{n}))$.\\
If $y_{n}=x_{n}$ and $t_{n}=x_{n}$, then $x_{n}\in E(f)\cap F(S)$ and go to step 4.\\
\item[Step 4.] Compute $x_{n+1}=R_{C_{n}\cap D_{n}}x_{0}$, where $R_{C_{n}\cap D_{n}}$ is the sunny generalized nonexpansive retraction from $E$
onto $C_{n}\cap D_{n}$ and
$$C_{n}=\{z\in C: \phi(z,t_{n})\leq\phi(z,x_{n})\},$$
$$D_{n}=\{z\in C: \langle Jx_{n}-Jz,x_{0}-x_{n}\rangle\geq 0\}.$$
\item[Step 5.] set $n:=n+1$ and go to Step 2.
\end{description}
Before proving the strong convergence of the iterates generated by Algorithm 1, we prove following lemmas.
\begin{lm}\label{lem1}
 For every $x^{*}\in E(f)$ and $n\in \mathbb{N}$, we obtain
\begin{enumerate}
\item[(i)] $\langle Jx_{n}-Jy_{n},y-y_{n}\rangle\leq\lambda_{n}f(x_{n},y)-\lambda_{n}f(x_{n},y_{n}),\:\:\: \ \forall y\in C,$
\item[(ii)] $\phi(x^{*},z_{n})\leq\phi(x^{*},x_{n})-(1-2\lambda_{n}c_{1})\phi(y_{n},x_{n})-(1-2\lambda_{n}c_{2})\phi(z_{n},y_{n})$.
\end{enumerate}
 \end{lm}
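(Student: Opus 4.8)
The plan is to extract, from the first-order optimality conditions of the two strongly convex subproblems defining $y_{n}$ and $z_{n}$, two variational inequalities, and then to feed them into the three-point identities (\ref{eq36}) and (\ref{equa6}) together with the pseudomonotonicity (A2) and the $\phi$-Lipschitz-type condition (A5). Throughout, the basic elementary fact is that, $E$ being smooth, $y\mapsto\tfrac12\|y\|^{2}$ is G\^{a}teaux differentiable with gradient $Jy$, so $y\mapsto\tfrac12\phi(y,x_{n})=\tfrac12\|y\|^{2}-\langle y,Jx_{n}\rangle+\tfrac12\|x_{n}\|^{2}$ is convex and continuous with derivative $Jy-Jx_{n}$ at each $y$.

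For (i): the function $y\mapsto\lambda_{n}f(x_{n},y)+\tfrac12\phi(y,x_{n})$ is convex, lower semicontinuous and subdifferentiable on $C$ by (A4) together with the observation above, and $y_{n}$ minimizes it over $C$; hence Lemma \ref{lem7} gives $0\in\partial\big(\lambda_{n}f(x_{n},\cdot)+\tfrac12\phi(\cdot,x_{n})\big)(y_{n})+N_{C}(y_{n})$, and Lemma \ref{lem8} (whose interior qualification is automatic because $\tfrac12\phi(\cdot,x_{n})$ is finite on all of $E$) allows us to split the subdifferential. Thus there exist $w\in\partial_{2}f(x_{n},y_{n})$ and $\bar{w}\in N_{C}(y_{n})$ with $Jx_{n}-Jy_{n}=\lambda_{n}w+\bar{w}$; pairing with $y-y_{n}$ for $y\in C$ and using $\langle y_{n}-y,\bar{w}\rangle\geq0$ (definition of $N_{C}$) and $f(x_{n},y)\geq f(x_{n},y_{n})+\langle w,y-y_{n}\rangle$ (definition of $\partial_{2}f$) yields (i).

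For (ii): the same argument applied to the subproblem defining $z_{n}$ gives $\langle Jx_{n}-Jz_{n},y-z_{n}\rangle\leq\lambda_{n}\big(f(y_{n},y)-f(y_{n},z_{n})\big)$ for all $y\in C$. Take $y=x^{*}$; since $x^{*}\in E(f)$ we have $f(x^{*},y_{n})\geq0$, so (A2) forces $f(y_{n},x^{*})\leq0$ and hence $\langle Jx_{n}-Jz_{n},x^{*}-z_{n}\rangle\leq-\lambda_{n}f(y_{n},z_{n})$. Doubling this and rewriting the left-hand side with (\ref{equa6}) (taking $x=x^{*}$, $y=z_{n}$, $z=x_{n}$, $w=z_{n}$, and using $\phi(z_{n},z_{n})=0$) gives $\phi(x^{*},z_{n})+\phi(z_{n},x_{n})-\phi(x^{*},x_{n})\leq-2\lambda_{n}f(y_{n},z_{n})$. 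Next, bound $-2\lambda_{n}f(y_{n},z_{n})$ from above via (A5) with $(x,y,z)=(x_{n},y_{n},z_{n})$, that is $f(y_{n},z_{n})\geq f(x_{n},z_{n})-f(x_{n},y_{n})-c_{1}\phi(y_{n},x_{n})-c_{2}\phi(z_{n},y_{n})$, to obtain
$$\phi(x^{*},z_{n})\leq\phi(x^{*},x_{n})-\phi(z_{n},x_{n})-2\lambda_{n}\big(f(x_{n},z_{n})-f(x_{n},y_{n})\big)+2\lambda_{n}c_{1}\phi(y_{n},x_{n})+2\lambda_{n}c_{2}\phi(z_{n},y_{n}).$$
It remains to absorb $-\phi(z_{n},x_{n})-2\lambda_{n}\big(f(x_{n},z_{n})-f(x_{n},y_{n})\big)$. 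Part (i) with $y=z_{n}$ gives $-2\lambda_{n}\big(f(x_{n},z_{n})-f(x_{n},y_{n})\big)\leq2\langle z_{n}-y_{n},Jy_{n}-Jx_{n}\rangle$, while the three-point identity (\ref{eq36}) with $x=z_{n}$, $z=y_{n}$, $y=x_{n}$ gives $\phi(z_{n},x_{n})=\phi(z_{n},y_{n})+\phi(y_{n},x_{n})+2\langle z_{n}-y_{n},Jy_{n}-Jx_{n}\rangle$; the two inner products cancel, leaving $-\phi(z_{n},x_{n})-2\lambda_{n}\big(f(x_{n},z_{n})-f(x_{n},y_{n})\big)\leq-\phi(y_{n},x_{n})-\phi(z_{n},y_{n})$. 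Substituting this back produces exactly $\phi(x^{*},z_{n})\leq\phi(x^{*},x_{n})-(1-2\lambda_{n}c_{1})\phi(y_{n},x_{n})-(1-2\lambda_{n}c_{2})\phi(z_{n},y_{n})$.

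Deriving the two variational inequalities is routine once the gradient of $\tfrac12\phi(\cdot,x_{n})$ has been identified; the real work, and the step I expect to be the main obstacle, is the bookkeeping in (ii): one must apply (A5) in the correct direction (it must yield a lower bound for $f(y_{n},z_{n})$, hence an upper bound for $-2\lambda_{n}f(y_{n},z_{n})$), and the inner product produced by the optimality of $y_{n}$ must cancel exactly against the one coming from the three-point expansion of $\phi(z_{n},x_{n})$. That cancellation is the Banach-space analogue of the classical Hilbert-space extragradient estimate, and no tool beyond Lemmas \ref{lem7}--\ref{lem8}, the identities (\ref{eq36}), (\ref{equa6}), (A2) and (A5) is needed.
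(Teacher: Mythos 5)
Your proposal is correct and follows essentially the same route as the paper: both extract the variational inequalities from the optimality conditions via Lemmas \ref{lem7} and \ref{lem8}, set $y=x^{*}$ and invoke pseudomonotonicity to get $\langle Jz_{n}-Jx_{n},x^{*}-z_{n}\rangle\geq\lambda_{n}f(y_{n},z_{n})$, lower-bound $f(y_{n},z_{n})$ by (A5), and then convert the inner products into $\phi$-terms using (\ref{eq36})/(\ref{equa6}). The only difference is cosmetic bookkeeping — you apply the two three-point identities separately and observe the cancellation of the cross term, whereas the paper first assembles the combined inequality and then converts everything at once.
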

 \begin{proof}
By the condition ($A4$) for $f(x,.)$ and from lemmas \ref{lem7} and \ref{lem8}, we obtain
$$z_{n}=\arg\min_{y\in C}\{\lambda_{n}f(y_{n},y)+\frac{1}{2}\phi(y,x_{n})\}$$
if and only if
$$0\in \lambda_{n}\partial_{2}f(y_{n},z_{n})+\frac{1}{2}\nabla_{1}\phi(z_{n},x_{n})+N_{C}(z_{n}).$$
This implies that $w\in \partial_{2}f(y_{n},z_{n})$ and $\overline{w}\in N_{C}(z_{n})$ exist such that
\begin{equation}\label{equa18}
0=\lambda_{n}w+Jz_{n}-Jx_{n}+\overline{w},
\end{equation}
so, from definition of $\partial_{2}f(y_{n},z_{n})$, we obtain
$$\langle w, y-z_{n}\rangle\leq f(y_{n},y)-f(y_{n},z_{n}),$$
for all $y\in C$. Set $y=x^{*}$, we have
$$\langle w, x^{*}-z_{n}\rangle\leq f(y_{n},x^{*})-f(y_{n},z_{n}).$$
So, by definition of the $N_{C}(z_{n})$ and equality (\ref{equa18}), we get
\begin{equation}\label{eq40} \lambda_{n}\langle w, z_{n}-y\rangle\leq \langle Jz_{n}-Jx_{n},y-z_{n}\rangle,
\end{equation} for all $y\in C$. Put $y=x^{*}$ in inequality \ref{eq40}, we have
\begin{equation}\label{equa1}
\langle Jz_{n}-Jx_{n},x^{*}-z_{n}\rangle\geq\lambda_{n}\{f(y_{n},z_{n})-f(y_{n},x^{*})\}
\geq\lambda_{n}f(y_{n},z_{n}),
\end{equation}
since $f(x^{*},y_{n})\geq0$ and $f$ is pseudomonotone on $C$. Replacing $x, y$ and $z$ by $x_{n}$, $y_{n}$ and $z_{n}$ in inequality (\ref{equa17}), respectively, we get
\begin{equation}\label{equa2}
f(y_{n},z_{n})\geq f(x_{n},z_{n})-f(x_{n},y_{n})-c_{1}\phi(y_{n},x_{n})-c_{2}\phi(z_{n},y_{n}).
\end{equation}
In a similar way, since $y_{n}=\arg\min_{y\in C}\{\lambda_{n}f(x_{n},y)+\frac{1}{2}\phi(y,x_{n})\}$, we have
$$\langle Jx_{n}-Jy_{n},y-y_{n}\rangle\leq\lambda_{n}\{f(x_{n},y)-f(x_{n},y_{n})\},$$
for all $y\in C$, hence ($i$) is proved. Let $y=z_{n}$ in above inequality, we obtain
\begin{equation}\label{equa3}
\langle Jx_{n}-Jy_{n},z_{n}-y_{n}\rangle\leq\lambda_{n}\{f(x_{n},z_{n})-f(x_{n},y_{n})\}.
\end{equation}
Combining inequalities (\ref{equa1}), (\ref{equa2}) and (\ref{equa3}), we get
\begin{equation}\label{equa19}
2\langle Jz_{n}-Jx_{n},x^{*}-z_{n}\rangle\geq 2\langle Jy_{n}-Jx_{n},y_{n}-z_{n}\rangle-2\lambda_{n}c_{1}\phi(y_{n},x_{n})-2\lambda_{n}c_{2}\phi(z_{n},y_{n}).
\end{equation}
From inequality (\ref{equa19}) and (\ref{eq36}), we have
\begin{equation*}
\begin{aligned}
\phi(x^{*},x_{n})-\phi(x^{*},z_{n})\geq\phi(z_{n},y_{n})+\phi(y_{n},x_{n})
-2\lambda_{n}c_{1}\phi(y_{n},x_{n})-2\lambda_{n}c_{2}\phi(z_{n},y_{n}).
\end{aligned}
\end{equation*}
Hence, ($ii$) is proved.
\end{proof}
\begin{rk}
In a real Hilbert space $E$, Lemma \ref{lem1} is reduced to Lemma
$3.1$ in \cite{Anh}.
\end{rk}
\begin{lm}
In Algorithm $1$, we obtain the unique optimal solutions $y_{n}$ and $z_{n}$.
\end{lm}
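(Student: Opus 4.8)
The plan is to prove, for each $n$, both existence and uniqueness of the minimizer of the relevant objective over the closed convex set $C$; the argument for $y_{n}$ and for $z_{n}$ is identical, so I would carry it out once and then observe that it transfers verbatim.

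For existence I would use the direct method. Fix $n$ and set $g(y):=\lambda_{n}f(x_{n},y)+\tfrac12\phi(y,x_{n})$ for $y\in C$ and $g(y):=+\infty$ for $y\notin C$. Since $\phi(y,x_{n})=\|y\|^{2}-2\langle y,Jx_{n}\rangle+\|x_{n}\|^{2}$, the map $y\mapsto\phi(y,x_{n})$ is continuous and convex on $E$, and by (A4) the map $f(x_{n},\cdot)$ is convex, lower semicontinuous and subdifferentiable on $C$; hence $g$ is proper, convex and lower semicontinuous on $E$. For coercivity I would pick any $y_{0}\in C$ and any $\xi\in\partial_{2}f(x_{n},y_{0})$ (nonempty by (A4)), so that $f(x_{n},y)\ge f(x_{n},y_{0})+\langle\xi,y-y_{0}\rangle$ for all $y\in C$; combining this with (\ref{eq14}) gives
$$g(y)\ \ge\ \tfrac12\big(\|y\|-\|x_{n}\|\big)^{2}+\lambda_{n}\langle\xi,y-y_{0}\rangle+\lambda_{n}f(x_{n},y_{0})\ \longrightarrow\ +\infty$$
as $\|y\|\to\infty$. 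Since $E$ is uniformly convex it is reflexive, so the proper, convex, lower semicontinuous, coercive function $g$ attains its infimum on the nonempty closed convex set $C$; fix a minimizer $y_{n}$.

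For uniqueness I would note that $E$, being uniformly convex, is strictly convex, and deduce that $t\mapsto\|t\|^{2}$ is strictly convex on $E$: if $u\neq v$ and $\|u\|=\|v\|=r$ then $r>0$ and $\|\tfrac{u+v}{2}\|<r$ by strict convexity of the norm on $S(E)$, while if $\|u\|\neq\|v\|$ then $\|\tfrac{u+v}{2}\|^{2}\le\big(\tfrac{\|u\|+\|v\|}{2}\big)^{2}<\tfrac12\|u\|^{2}+\tfrac12\|v\|^{2}$; in either case $\|\tfrac{u+v}{2}\|^{2}<\tfrac12\|u\|^{2}+\tfrac12\|v\|^{2}$. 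Hence $y\mapsto\phi(y,x_{n})$ is strictly convex, adding the convex function $\lambda_{n}f(x_{n},\cdot)$ keeps $g$ strictly convex, and a strictly convex function has at most one minimizer on a convex set; together with the existence just shown, $y_{n}$ is the unique optimal solution. Finally, since $y_{n}\in C$ the bifunction $f(y_{n},\cdot)$ also satisfies (A4), so running the same two steps on $y\mapsto\lambda_{n}f(y_{n},y)+\tfrac12\phi(y,x_{n})$ yields that $z_{n}$ exists and is the unique optimal solution.

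I expect the main obstacle to be the existence half, and within it the coercivity estimate: one has to manufacture a global affine minorant of $f(x_{n},\cdot)$ from the mere subdifferentiability in (A4) and pair it with the quadratic lower bound (\ref{eq14}) for $\phi$. The strict convexity of $\|\cdot\|^{2}$ is the other thing worth writing out, but it is short; the remaining ingredients (reflexivity, weak lower semicontinuity of convex lower semicontinuous functions, at-most-one minimizer of a strictly convex function) are standard.
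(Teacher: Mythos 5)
Your proof is correct, but it takes a different route from the paper on uniqueness and, unlike the paper, also settles existence. The paper's argument is purely variational: it takes two putative minimizers $y_{n},\acute{y}_{n}$, applies the first-order optimality inequality of Lemma \ref{lem1}$(i)$ to each with the other as test point, adds, and obtains $\langle J\acute{y}_{n}-Jy_{n},\acute{y}_{n}-y_{n}\rangle\leq0$; strict monotonicity and injectivity of $J$ (valid since $E$ is strictly convex) then force $y_{n}=\acute{y}_{n}$, and the same for $z_{n}$. You instead argue that $y\mapsto\|y\|^{2}$, hence $y\mapsto\phi(y,x_{n})$, is strictly convex, so the whole objective is strictly convex and has at most one minimizer. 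The two mechanisms are essentially dual to one another ($J$ is strictly monotone precisely because $\tfrac12\|\cdot\|^{2}$ is strictly convex), so neither buys more generality, but the paper's version reuses machinery (Lemma \ref{lem1}$(i)$) already needed elsewhere, while yours is self-contained at the level of the objective function. The more substantive difference is that the paper tacitly assumes the $\arg\min$ is nonempty, whereas your direct-method argument --- coercivity from an affine minorant of $f(x_{n},\cdot)$ supplied by (A4) together with the quadratic lower bound (\ref{eq14}), plus reflexivity and weak lower semicontinuity --- actually proves existence; this fills a gap the paper leaves open and is worth keeping.
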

\begin{proof}
Let $y_{n},\acute{y}_{n}\in\arg\min_{y\in C}\{\lambda_{n}f(x_{n},y)+\frac{1}{2}\phi(y,x_{n})\}$, then using Lemma \ref{lem1}($i$), we have
\begin{equation}\label{equa22}
\langle Jx_{n}-Jy_{n},y-y_{n}\rangle\leq\lambda_{n}f(x_{n},y)-\lambda_{n}f(x_{n},y_{n}),\:\:\: \ \forall y\in C,
\end{equation}
\begin{equation}\label{equa23}
\langle Jx_{n}-J\acute{y}_{n},y-\acute{y}_{n}\rangle\leq\lambda_{n}f(x_{n},y)-\lambda_{n}f(x_{n},\acute{y}_{n}),\:\:\: \ \forall y\in C.
\end{equation}
Set $y=\acute{y}_{n}$ in inequality (\ref{equa22}) and $y=y_{n}$ in inequality (\ref{equa23}). Hence, we get
$$\langle J\acute{y}_{n}-Jy_{n},\acute{y}_{n}-y_{n}\rangle\leq0.$$
Since $J$ is monotone and one-one, we obtain $y_{n}=\acute{y}_{n}.$ In a similar way, also $z_{n}$ is unique.
\end{proof}
\begin{rk}
If $E$ is a real Hilbert space, then Algorithm $1$ is the same
Extragradient Algorithm in \cite{Vuo} provided that the sequence
$\{\alpha_{n}\}$ satisfies the conditions of Step $0$ of Algorithm
$1$.
%same Algorithm $1$ in \cite{Vuo} (extragradient algorithm)
\end{rk}
\begin{lm}\label{lem3}
For every $x^{*}\in E(f)\cap F(S)$ and $n\in \mathbb{N}$, we get
$$\phi(x^{*},t_{n})\leq\phi(x^{*},x_{n}).$$
\end{lm}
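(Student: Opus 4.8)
The plan is to estimate $\phi(x^*,t_n)$ by unwinding the definition of $t_n$ through the duality map and using convexity of $\|\cdot\|^2$ in $E^*$, together with the nonexpansiveness properties already at our disposal. Writing $t_n = J^{-1}\bigl(\alpha_n Jx_n + (1-\alpha_n)(\beta_n Jz_n + (1-\beta_n)JSz_n)\bigr)$, I would first set $u_n = J^{-1}(\beta_n Jz_n + (1-\beta_n)JSz_n)$, so that $Jt_n = \alpha_n Jx_n + (1-\alpha_n)Ju_n$. The natural tool is the identity $\phi(x^*,J^{-1}x^*) = \|x^*\|^2 - 2\langle x^*, x^*\rangle + \|x^*\|^2$ read through $V$, or more directly the convexity estimate $\phi\bigl(x^*, J^{-1}(\sum t_i \xi_i)\bigr) \le \sum t_i \phi(x^*, J^{-1}\xi_i)$ for a convex combination $\sum t_i = 1$, which follows from $\|\cdot\|^2$ being convex on $E^*$ and the linearity of $\langle x^*,\cdot\rangle$.

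Applying this twice: first $\phi(x^*,t_n) \le \alpha_n \phi(x^*,x_n) + (1-\alpha_n)\phi(x^*,u_n)$, and then $\phi(x^*,u_n) \le \beta_n \phi(x^*,z_n) + (1-\beta_n)\phi(x^*,Sz_n)$. Since $S$ is relatively nonexpansive and $x^* \in F(S)$, we have $\phi(x^*,Sz_n) \le \phi(x^*,z_n)$, so $\phi(x^*,u_n) \le \phi(x^*,z_n)$, and hence $\phi(x^*,t_n) \le \alpha_n\phi(x^*,x_n) + (1-\alpha_n)\phi(x^*,z_n)$. Finally, Lemma \ref{lem1}(ii) together with the choice of $\lambda_n$ (namely $\lambda_{\max} < \min\{1/(2c_1), 1/(2c_2)\}$, which makes the coefficients $1-2\lambda_n c_1$ and $1-2\lambda_n c_2$ strictly positive) gives $\phi(x^*,z_n) \le \phi(x^*,x_n)$. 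Combining, $\phi(x^*,t_n) \le \alpha_n\phi(x^*,x_n) + (1-\alpha_n)\phi(x^*,x_n) = \phi(x^*,x_n)$, which is the claim.

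Concretely I would organize the write-up as follows. Step 1: record the convex-combination inequality for $\phi$ via $V$ and \eqref{eq5}, or simply cite convexity of the norm squared in $E^*$; state it for a two-term combination and note it iterates. Step 2: apply it to $Jt_n$ to peel off the $\alpha_n$ term. Step 3: apply it again to $Ju_n$ to peel off the $\beta_n$ term and invoke relative nonexpansiveness of $S$ at the fixed point $x^*$ to discard the $JSz_n$ contribution. Step 4: invoke Lemma \ref{lem1}(ii) and positivity of $1-2\lambda_n c_1$, $1-2\lambda_n c_2$ to get $\phi(x^*,z_n)\le\phi(x^*,x_n)$. Step 5: chain the inequalities.

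The only genuinely delicate point is justifying the convexity inequality $\phi\bigl(x^*, J^{-1}(\alpha\xi+(1-\alpha)\eta)\bigr) \le \alpha\,\phi(x^*,J^{-1}\xi) + (1-\alpha)\,\phi(x^*,J^{-1}\eta)$ in a general (not Hilbert) Banach space; this is where one must be careful, but it is immediate from $V(x^*, x^*_\bullet) = \phi(x^*, J^{-1}x^*_\bullet)$ being, as a function of the dual variable $x^*_\bullet$, a sum of $\|x^*_\bullet\|^2$ (convex) and an affine term $\|x^*\|^2 - 2\langle x^*, x^*_\bullet\rangle$, hence convex on $E^*$. Everything else is a routine chaining of inequalities, so I do not anticipate further obstacles.
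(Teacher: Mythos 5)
Your proposal is correct and follows essentially the same route as the paper: expand $\phi(x^{*},t_{n})$ via convexity of $\|\cdot\|^{2}$ on $E^{*}$ to get the convex combination $\alpha_{n}\phi(x^{*},x_{n})+(1-\alpha_{n})\beta_{n}\phi(x^{*},z_{n})+(1-\alpha_{n})(1-\beta_{n})\phi(x^{*},Sz_{n})$, then discard the last two terms using relative nonexpansiveness of $S$ and Lemma \ref{lem1}(ii) with the positivity of $1-2\lambda_{n}c_{1}$ and $1-2\lambda_{n}c_{2}$. Your two-stage peeling via $u_{n}=J^{-1}(\beta_{n}Jz_{n}+(1-\beta_{n})JSz_{n})$ is only a cosmetic reorganization of the paper's single three-term estimate, and you in fact spell out the final steps more explicitly than the paper does.
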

\begin{proof}
From Lemma \ref{lem1}, by the convexity of $\|.\|^{2}$ and by the definition of functions $\phi$ and $S$, we have
\begin{equation}\label{equa37}
\begin{aligned}
\phi(x^{*},t_{n})&=\phi(x^{*},J^{-1}(\alpha_{n}Jx_{n}+(1-\alpha_{n})(\beta_{n}Jz_{n}+(1-\beta_{n})JSz_{n})))\\
&=\|x^{*}\|^{2}+\|\alpha_{n}Jx_{n}+(1-\alpha_{n})(\beta_{n}Jz_{n}+(1-\beta_{n})JSz_{n})\|\\
&-2\langle x^{*},\alpha_{n}Jx_{n}+(1-\alpha_{n})(\beta_{n}Jz_{n}+(1-\beta_{n})JSz_{n})\rangle\\
&\leq\|x^{*}\|^{2}+\alpha_{n}\|x_{n}\|^{2}+(1-\alpha_{n})\beta_{n}\|z_{n}\|^{2}+(1-\alpha_{n})(1-\beta_{n})\|Sz_{n}\|^{2}\\
&-2\alpha_{n}\langle x^{*},Jx_{n}\rangle-2(1-\alpha_{n})\beta_{n}\langle x^{*},Jz_{n}\rangle-2(1-\alpha_{n})(1-\beta_{n})\langle x^{*},JSz_{n}\rangle\\
&=\alpha_{n}\phi(x^{*},x_{n})+(1-\alpha_{n})\beta_{n}\phi(x^{*},z_{n})+(1-\alpha_{n})(1-\beta_{n})\phi(x^{*},Sz_{n})\\
&\leq\phi(x^{*},x_{n}).
\end{aligned}
\end{equation}
\end{proof}
We examine the stopping condition in the following lemma.
\begin{lm}\label{lem6}
Let $y_{n}=x_{n}$, then $x_{n}\in E(f)$. If $y_{n}=x_{n}$ and $t_{n}=x_{n}$, then $x_{n}\in E(f)\cap F(S)$.
\end{lm}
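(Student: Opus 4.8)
The plan is to extract both conclusions directly from Lemma \ref{lem1}(i), the defining variational inequalities of the two subproblems, and the explicit formula for $t_n$ in Step 3, without invoking the monotone-operator estimates used in the main convergence argument.

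First, I would establish that $y_n = x_n$ forces $x_n \in E(f)$. Lemma \ref{lem1}(i) asserts that, for every $y \in C$,
\[
\langle Jx_n - Jy_n,\, y - y_n\rangle \le \lambda_n\big(f(x_n,y) - f(x_n,y_n)\big).
\]
Substituting $y_n = x_n$ annihilates the left-hand side, and by $(A1)$ one has $f(x_n,y_n) = f(x_n,x_n) = 0$; hence $\lambda_n f(x_n,y) \ge 0$ for all $y\in C$, and since $\lambda_n > 0$ this is precisely $x_n\in E(f)$. (Equivalently one could argue via Lemma \ref{lem13}: $y_n$ minimizes $y\mapsto \lambda_n f(x_n,y) + \frac12\phi(y,x_n)$ over $C$, so $y_n = x_n$ makes $x_n$ a minimizer of $f(x_n,\cdot)$ — but the inequality route is shorter.)

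Next, assuming in addition that $t_n = x_n$, I would first observe that the program defining $z_n$, namely $z_n = \arg\min_{y\in C}\{\lambda_n f(y_n,y) + \frac12\phi(y,x_n)\}$, becomes identical to the one defining $y_n$ as soon as $y_n = x_n$; by the uniqueness of these optimal solutions established above, $z_n = y_n = x_n$. Feeding this into Step 3 and applying $J$ gives
\[
Jx_n = Jt_n = \alpha_n Jx_n + (1-\alpha_n)\beta_n Jx_n + (1-\alpha_n)(1-\beta_n)JSx_n,
\]
which rearranges to $(1-\alpha_n)(1-\beta_n)\big(Jx_n - JSx_n\big) = 0$. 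Since Step 0 forces $\alpha_n \le e < 1$ and $\beta_n \le b < 1$, the factor $(1-\alpha_n)(1-\beta_n)$ is strictly positive, so $Jx_n = JSx_n$; because $E$ is uniformly convex, hence strictly convex, $J$ is one-one, whence $Sx_n = x_n$, i.e.\ $x_n \in F(S)$. Combined with the first part, $x_n\in E(f)\cap F(S)$.

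The only point deserving care — the modest obstacle here — is this second step: one must recognize that $y_n = x_n$ collapses the $z_n$-subproblem onto the $y_n$-subproblem (so that $z_n = x_n$ by uniqueness) and then keep track of the coefficients in the duality-mapping combination so that the multiplier of $Jx_n - JSx_n$ cannot degenerate; this is exactly where the restrictions $e<1$ and $b<1$ from Step 0, together with the injectivity of $J$ coming from strict convexity of $E$, are needed. Everything else is routine substitution.
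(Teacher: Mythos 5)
Your proof is correct and follows essentially the same route as the paper's: optimality of $y_n=x_n$ in the first subproblem yields $x_n\in E(f)$, then $z_n=x_n$ and the strict positivity of $(1-\alpha_n)(1-\beta_n)$ together with injectivity of $J$ give $x_n\in F(S)$. The only (cosmetic) differences are that you read the first conclusion directly off Lemma \ref{lem1}(i) to get $f(x_n,y)\ge 0$, whereas the paper routes through the auxiliary-problem equivalence of Lemma \ref{lem2} with $L(x_n,y)=\phi(y,x_n)$, and that you actually justify the step $z_n=x_n$ (by noting the two subproblems coincide and invoking uniqueness), which the paper merely asserts.
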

\begin{proof}
Suppose $y_{n}=x_{n}$, then by the definition of $y_{n}$, condition ($A1$), property of $\phi$ (\ref{equa21}) and since $0<\lambda_{min}\leq\lambda_{n}\leq\lambda_{max}\leq1$ , we have
$$0\leq \lambda_{n}f(x_{n},y_{n})+\frac{1}{2}\phi(y_{n},x_{n})\leq f(x_{n},y)+\frac{1}{2}\phi(y,x_{n}),$$
for all $y\in C$. Set $\phi(y,x_{n})=L(x_{n},y)$, hence Lemma \ref{lem2} implies that $x_{n}\in E(f)$.\\
Let $y_{n}=x_{n}$ and $t_{n}=x_{n}$, we have that $z_{n}=x_{n}$ and since $J^{-1}$ is one-one, we get
$$Jx_{n}=\alpha_{n}Jx_{n}+(1-\alpha_{n})(\beta_{n}Jx_{n}+(1-\beta_{n})JSx_{n}).$$
Since $1-\alpha_{n}>0$ and $1-\beta_{n}>0$, it follows that $Jx_{n}=JSx_{n}$ and since $J$ is one-one, we get $x_{n}=Sx_{n}$. So $x_{n}\in F(S)$
\end{proof}
\begin{rk}
In a real Hilbert space $E$, Lemma \ref{lem6} is the same
Proposition $3.5$ in \cite{Vuo} with different proof, provided
that the sequence $\{\alpha_{n}\}$ satisfies the conditions of
Step $0$ of Algorithm $1$.
\end{rk}
\begin{thm}\label{eq25}
Let $C$ be a nonempty closed convex subset of a uniformly
convex, uniformly smooth Banach space $E$.
 Assume that $f:C\times
C\rightarrow \mathbb{R}$ is a bifunction which satisfies
conditions $(A1)-(A5)$ and $S:C\rightarrow C$
is a relatively nonexpansive mapping such that $$\Omega:=E(f)\cap F(S)\neq\emptyset.$$
%Assume that $R_{C}$ is the sunny generalized nonexpansive retraction from $E$ onto $C$.
Then sequences
$\{x_{n}\}_{n=1}^{\infty}$, $\{y_{n}\}_{n=1}^{\infty}$, $\{z_{n}\}_{n=1}^{\infty}$ and $\{t_{n}\}_{n=1}^{\infty}$ generated
by Algorithm $1$ converge strongly to the some solution $u^{*}\in
\Omega$, where $u^{*}=R_{\Omega}{x_{0}}$, and $R_{\Omega}$ is sunny generalized nonexpansive retraction from $E$ onto $\Omega$.
\end{thm}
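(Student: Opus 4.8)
The idea is to run the monotone hybrid (CQ) scheme in the $\phi$-geometry of $E$, with the sunny generalized nonexpansive retractions $R_{C_n\cap D_n}$ in place of the metric/generalized projections one would use in Hilbert space. The first step is to verify, by induction on $n$, that $\Omega\subseteq C_n\cap D_n$ for every $n$, so that (granting, as the standing setup requires and as one checks, that these sets are closed with $J(C_n\cap D_n)$ convex, whence $R_{C_n\cap D_n}$ exists by Lemma~\ref{Ibaraki}) Algorithm~1 is well defined. For $n=0$ one has $D_0=C$ and $\Omega\subseteq C_0$ by Lemma~\ref{lem3}. Assuming $\Omega\subseteq C_n\cap D_n$, the point $x_{n+1}=R_{C_n\cap D_n}x_0$ exists, and the variational characterization in Lemma~\ref{lem4}(1) shows that every $y\in C_n\cap D_n$ satisfies $\langle Jx_{n+1}-Jy,x_0-x_{n+1}\rangle\ge0$, i.e. $C_n\cap D_n\subseteq D_{n+1}$; together with $\Omega\subseteq C_{n+1}$ (Lemma~\ref{lem3} at level $n+1$) this gives $\Omega\subseteq C_{n+1}\cap D_{n+1}$.

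The second step is to show the iterates are bounded and the various residuals vanish. The crucial observation is that, by the definition of $D_n$ and Lemma~\ref{lem4}(1), one in fact has $x_n=R_{D_n}x_0$; applying Lemma~\ref{lem4}(2) to $R_{D_n}$ with the pair $(x_0,x_{n+1})$ (note $x_{n+1}\in C_n\cap D_n\subseteq D_n$) yields
\[\phi(x_{n+1},x_n)+\phi(x_n,x_0)\le\phi(x_{n+1},x_0),\]
so $\{\phi(x_n,x_0)\}$ is nondecreasing, while Lemma~\ref{eq38} with $\Omega\subseteq C_n\cap D_n$ gives $\phi(x_{n+1},x_0)=\min_{y\in C_n\cap D_n}\phi(y,x_0)\le\phi(u^*,x_0)$, where $u^*=R_\Omega x_0$. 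Hence $\{\phi(x_n,x_0)\}$ converges, $\{x_n\}$ is bounded by $(\ref{eq14})$, and $\phi(x_{n+1},x_n)\to0$, so $\|x_{n+1}-x_n\|\to0$ by Lemma~\ref{lem5} and then $\|Jx_{n+1}-Jx_n\|\to0$ by uniform norm-to-norm continuity of $J$. Since $x_{n+1}\in C_n$, $\phi(x_{n+1},t_n)\le\phi(x_{n+1},x_n)\to0$, so $\|x_{n+1}-t_n\|\to0$, hence $\|x_n-t_n\|\to0$ and $\phi(x^*,x_n)-\phi(x^*,t_n)\to0$ for $x^*\in\Omega$. Combining the inequality chain inside the proof of Lemma~\ref{lem3}, Lemma~\ref{lem1}(ii), and $\phi(x^*,Sz_n)\le\phi(x^*,z_n)$ gives
\[(1-\alpha_n)(1-2\lambda_nc_1)\phi(y_n,x_n)+(1-\alpha_n)(1-2\lambda_nc_2)\phi(z_n,y_n)\le\phi(x^*,x_n)-\phi(x^*,t_n)\longrightarrow0 .\]
Since $1-\alpha_n\ge1-e>0$ and $\lambda_n\le\lambda_{\max}<\min\{1/(2c_1),1/(2c_2)\}$, both coefficients stay bounded away from $0$, so $\phi(y_n,x_n)\to0$ and $\phi(z_n,y_n)\to0$, whence $\|y_n-x_n\|\to0$, $\|z_n-y_n\|\to0$, $\|z_n-x_n\|\to0$, $\|z_n-t_n\|\to0$. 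Finally, from $Jt_n=\alpha_nJx_n+(1-\alpha_n)\beta_nJz_n+(1-\alpha_n)(1-\beta_n)JSz_n$ one isolates $(1-\alpha_n)(1-\beta_n)(JSz_n-Jz_n)=(Jt_n-Jz_n)-\alpha_n(Jx_n-Jz_n)$, whose right side tends to $0$; as $(1-\alpha_n)(1-\beta_n)\ge(1-e)(1-b)>0$, this forces $\|JSz_n-Jz_n\|\to0$, hence $\|Sz_n-z_n\|\to0$ by uniform continuity of $J^{-1}$ on bounded sets.

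The third step identifies the limit and upgrades to strong convergence. By reflexivity the bounded sequence $\{x_n\}$ has a subsequence $x_{n_k}\rightharpoonup p$, and then $y_{n_k},z_{n_k},t_{n_k}\rightharpoonup p$. From $z_{n_k}\rightharpoonup p$ and $\|Sz_{n_k}-z_{n_k}\|\to0$ we get $p\in\hat F(S)=F(S)$. From Lemma~\ref{lem1}(i), $\langle Jx_{n_k}-Jy_{n_k},y-y_{n_k}\rangle\le\lambda_{n_k}\big(f(x_{n_k},y)-f(x_{n_k},y_{n_k})\big)$; refining so that $\lambda_{n_k}\to\lambda^*\ge\lambda_{\min}>0$ and using $\|Jx_{n_k}-Jy_{n_k}\|\to0$ together with the joint weak continuity (A3) ($f(x_{n_k},y)\to f(p,y)$, $f(x_{n_k},y_{n_k})\to f(p,p)=0$) gives $0\le\lambda^*f(p,y)$ for all $y\in C$, i.e. $p\in E(f)$; thus $p\in\Omega$. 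Writing $L:=\lim_n\phi(x_n,x_0)$, weak lower semicontinuity of $\phi(\cdot,x_0)$ gives $\phi(p,x_0)\le L$, while Lemma~\ref{eq38} (for $R_\Omega$ and for each $R_{C_{n-1}\cap D_{n-1}}$) gives $\phi(u^*,x_0)\le\phi(p,x_0)$ and $\phi(x_n,x_0)\le\phi(u^*,x_0)$; hence $L=\phi(u^*,x_0)=\phi(p,x_0)$ and, by uniqueness of the minimizer, $p=u^*$. Then Lemma~\ref{lem4}(2) for $R_{C_{n-1}\cap D_{n-1}}$ with the pair $(x_0,u^*)$ (note $u^*\in\Omega\subseteq C_{n-1}\cap D_{n-1}$) yields $\phi(u^*,x_n)+\phi(x_n,x_0)\le\phi(u^*,x_0)=L$, so $\phi(u^*,x_n)\to0$, and Lemma~\ref{lem5} gives $x_n\to u^*$ in norm; the residual estimates of the second step then give $y_n,z_n,t_n\to u^*$ as well.

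I expect two points to carry the weight. The first is spotting that $x_n=R_{D_n}x_0$: this is exactly what turns Lemma~\ref{lem4}(2) into the telescoping inequality $\phi(x_{n+1},x_n)+\phi(x_n,x_0)\le\phi(x_{n+1},x_0)$ that drives all the asymptotic estimates, and an analogous use of it (with $u^*$ in place of $x_{n+1}$) delivers strong convergence without appealing to the Kadec--Klee property. The second is that one must extract, from the \emph{single} bound $\phi(x^*,t_n)\le\phi(x^*,x_n)$, \emph{both} the equilibrium residuals $\phi(y_n,x_n),\phi(z_n,y_n)\to0$ and the fixed-point residual $\|Sz_n-z_n\|\to0$; this is where the constraints of Step~0 enter essentially — the strict gap $\lambda_{\max}<\min\{1/(2c_1),1/(2c_2)\}$ furnished by the $\phi$-Lipschitz-type condition (A5), and $\alpha_n,\beta_n$ remaining inside $[a,e],[d,b]\subset(0,1)$ — together with the careful use of the uniform norm-to-norm continuity of $J$ and $J^{-1}$ on bounded sets (which is where the uniform convexity and smoothness of $E$ are needed).
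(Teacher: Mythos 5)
Your proposal is correct, and it follows the same hybrid--projection skeleton as the paper (induction to get $\Omega\subseteq C_n\cap D_n$, monotonicity and boundedness of the Lyapunov sequence, vanishing residuals, identification of the weak limit, passage to strong convergence), but several of your technical steps genuinely differ from the paper's and are worth comparing. First, the paper derives $\phi(y_n,x_n),\phi(z_n,y_n)\to0$ in two stages: it first uses Lemma~\ref{lem15} to get $\|Jz_n-Jx_n\|\to0$, deduces $\phi(x^*,z_n)\to\phi(x^*,x_n)$, and only then applies Lemma~\ref{lem1}(ii); you instead fold Lemma~\ref{lem1}(ii) directly into the chain of Lemma~\ref{lem3} and read off both residuals from a single inequality, which is shorter and uses only the lower bounds $1-\alpha_n\ge 1-e$ and $1-2\lambda_n c_i\ge 1-2\lambda_{\max}c_i>0$. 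Second, for the fixed-point residual the paper again invokes Lemma~\ref{lem15} (the function $g_2$), whereas you isolate $(1-\alpha_n)(1-\beta_n)(JSz_n-Jz_n)$ algebraically from the convex combination defining $Jt_n$; your route is more elementary and avoids the second application of the uniform-convexity inequality. Third, and most substantively, your endgame is stronger than the paper's: the paper asserts that $\|x_{n+1}-x_n\|\to0$ ``therefore'' $\{x_n\}$ converges strongly, which is a non sequitur (successive differences tending to zero do not give a Cauchy sequence), and only afterwards identifies the limit; you instead extract a weak cluster point $p$, show $p\in\Omega$ and $p=u^*$ via weak lower semicontinuity of $\phi(\cdot,x_0)$ and the minimality in Lemma~\ref{eq38}, and then obtain genuine strong convergence from $\phi(u^*,x_n)+\phi(x_n,x_0)\le\phi(u^*,x_0)$ together with $\phi(x_n,x_0)\to\phi(u^*,x_0)$ and Lemma~\ref{lem5}. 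This repairs an actual gap in the published argument. One caveat applies equally to both proofs: the existence of the sunny generalized nonexpansive retraction $R_{C_n\cap D_n}$ requires $J(C_n\cap D_n)$ to be closed and convex (Lemma~\ref{Ibaraki}), and since $C_n$ is cut out by a condition affine in $z$ while $D_n$ is cut out by a condition affine in $Jz$, this is not automatic outside Hilbert space; you at least flag it, the paper does not, but neither of you verifies it.
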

\begin{proof}
At First, using induction we show that $\Omega\subseteq C_{n}\cap D_{n}$ for all $n\geq 0$. Let $x^{*}\in\Omega$, from Lemma \ref{lem3}, we get $\Omega\subseteq C_{n}$ for all $n\geq 0$. Now, we show that $\Omega\subseteq D_{n}$ for all $n\geq0$. It is clear that $\Omega\subseteq D_{0}$. Suppose that $\Omega\subseteq D_{n}$, i.e $\langle Jx_{n}-Jx^{*},x_{0}-x_{n}\rangle\geq0,$ for all $x^{*}\in \Omega$. Since $x_{n+1}=R_{C_{n}\cap D_{n}}x_{0}$, using Lemma \ref{lem4}, we get $\langle Jx_{n+1}-Jz,x_{0}-x_{n+1}\rangle\geq0,$ for all $z\in C_{n}\cap D_{n}$. This implies that $x^{*}\in D_{n+1}$. Therefore $\Omega\subseteq D_{n+1}$.
\par
Let $x^{*}\in\Omega\subseteq D_{n+1}$. Since $x_{n+1}\in D_{n}$,
using successively equality (\ref{eq36}),  it is easy to see that
the $\{\phi(x_{0},x_{n})\}$ is increasing and bounded from above
by $\phi(x_{0},x^{*})$, so
 $\lim\limits_{n\rightarrow\infty}\phi(x_{0},x_{n})$ exists.
 This yields that $\{\phi(x_{0},x_{n})\}$ is bounded. From inequality (\ref{eq14}),
 we know that $\{x_{n}\}$ is bounded. It is clear that $\lim_{n\rightarrow\infty}\phi(x_{n},x_{n+1})=0$,
 so Lemma \ref{lem5} implies that $\lim\limits_{n\rightarrow\infty}\|x_{n}-x_{n+1}\|=0$ and
 therefore $\{x_{n}\}$ converges strongly to $\bar{x}\in C.$ Since $J$ is uniformly norm-to-norm
 continuous on bounded sets, from equality (\ref{equa6}),
 we obtain that $\lim_{n\rightarrow\infty}\phi(x_{n+1},x_{n})=0$,
 and $\lim\limits_{n\rightarrow\infty}\phi(\nu,x_{n})$ exists for all $\nu \in C$. Since $x_{n+1}\in C_{n}$,
 we have $\lim\limits_{n\rightarrow\infty}\phi(x_{n+1},t_{n})=0$ and from Lemma \ref{lem5},
 we deduce that $\lim\limits_{n\rightarrow\infty}\|x_{n+1}-t_{n}\|=0$,
 thus $\lim\limits_{n\rightarrow\infty}\|x_{n}-t_{n}\|=0$ which implies that $\{t_{n}\}$
  converges strongly to $\bar{x}$. Using norm-to-norm continuity of $J$
  on bounded sets, we conclude that $\lim\limits_{n\rightarrow\infty}\|Jx_{n}-Jt_{n}\|=0$ and therefore
\begin{equation}\label{equa9}
\lim\limits_{n\rightarrow\infty}\phi(x^{*},x_{n})=\lim\limits_{n\rightarrow\infty}\phi(x^{*},t_{n}).
\end{equation}
Using Lemma \ref{lem1} $(ii)$, we obtain $\phi(x^{*},z_{n})\leq\phi(x^{*},x_{n})$. From inequality (\ref{eq14}) and the definition of $S$, we derive that $\{z_{n}\}$ and $\{Sz_{n}\}$ are bounded.
Let $r_{1}=\sup_{n\geq0}\{\|x_{n}\|,\|z_{n}\|\}$ and\linebreak $r_{2}=\sup_{n\geq0}\{\|z_{n}\|,\|Sz_{n}\|\}$. So, by Lemma \ref{lem15}, there exists a continuous, strictly increasing and convex function $g_{1}:[0,2r_{1}]\rightarrow\mathbb{R}$ with $g_{1}(0)=0$ such that for all $x^{*}\in \Omega$, we get
\begin{equation}
\begin{aligned}
\phi(x^{*},t_{n})&=\phi(x^{*},J^{-1}(\alpha_{n}Jx_{n}+(1-\alpha_{n})(\beta_{n}Jz_{n}+(1-\beta_{n})JSz_{n})))\\
&\leq\|x^{*}\|^{2}+\alpha_{n}\|x_{n}\|^{2}+(1-\alpha_{n})\beta_{n}\|z_{n}\|^{2}+(1-\alpha_{n})(1-\beta_{n})\|Sz_{n}\|^{2}
\\
&\hspace{.5cm}-2\alpha_{n}\langle
x^{*},Jx_{n}\rangle-2(1-\alpha_{n})\beta_{n}\langle x^{*},Jz_{n}
\rangle-2(1-\alpha_{n})(1-\beta_{n})\langle x^{*},JSz_{n}\rangle\\
&\hspace{.5cm}-\alpha_{n}(1-\alpha_{n})\beta_{n}g_{1}(\|Jz_{n}-Jx_{n}\|)\\
&\leq\phi(x^{*},x_{n})-\alpha_{n}(1-\alpha_{n})\beta_{n}g_{1}(\|Jz_{n}-Jx_{n}\|),
\end{aligned}
\end{equation}
and using the same argument, there exists a continuous, strictly increasing and convex function $g_{2}:[0,2r_{2}]\rightarrow\mathbb{R}$ with $g_{2}(0)=0$ such that for all $x^{*}\in \Omega$, we have
$$\phi(x^{*},t_{n})\leq\phi(x^{*},x_{n})-(1-\alpha_{n})^{2}\beta_{n}(1-\beta_{n})g_{2}(\|Jz_{n}-JSz_{n}\|)=0,$$
which imply
\begin{equation}\label{equa8}
\alpha_{n}(1-\alpha_{n})\beta_{n}g_{1}(\|Jz_{n}-Jx_{n}\|)\leq\phi(x^{*},x_{n})-\phi(x^{*},t_{n}),
\end{equation}
\begin{equation}\label{equa11}
(1-\alpha_{n})^{2}\beta_{n}(1-\beta_{n})g_{2}(\|Jz_{n}-JSz_{n}\|)\leq\phi(x^{*},x_{n})-\phi(x^{*},t_{n}).
\end{equation}
By letting $n\rightarrow\infty$ in inequalities (\ref{equa8}) and
(\ref{equa11}),
 using Lemma \ref{lem10} and equality (\ref{equa9}), we obtain
%\begin{equation*}
%\liminf_{n\rightarrow\infty}\alpha_{n}(1-\alpha_{n})\beta_{n}>0\quad \& \quad
%\liminf_{n\rightarrow\infty} (1-\alpha_{n})\beta_{n}(1-\beta_{n})>0,
%\end{equation*}
%it following from Lemma \ref{lem10} and inequality (\ref{equa9}) that
\begin{equation*}
\lim\limits_{n\rightarrow\infty}g_{1}(\|Jz_{n}-Jx_{n}\|)=0\quad \& \quad
\lim\limits_{n\rightarrow\infty}g_{2}(\|Jz_{n}-JSz_{n}\|)=0.
\end{equation*}
From the properties of $g_{1}$ and $g_{2}$, we have
\begin{equation}\label{equa33}
\lim\limits_{n\rightarrow\infty}\|Jz_{n}-Jx_{n}\|=0\quad \& \quad
\lim\limits_{n\rightarrow\infty}\|Jz_{n}-JSz_{n}\|=0.
\end{equation}
So from (\ref{eq18}), we obtain
\begin{equation}\label{equa12}
\lim\limits_{n\rightarrow\infty}\|z_{n}-x_{n}\|=0\quad \& \quad
\lim\limits_{n\rightarrow\infty}\|z_{n}-Sz_{n}\|=0,
\end{equation}
since $J^{-1}$ is uniformly norm-to-norm continuous on bounded sets.
By the same reason as in the proof of (\ref{equa9}), we can conclude
from (\ref{equa33}) and (\ref{equa12}) that
\begin{equation}\label{equa34}
\lim_{n\rightarrow\infty}\phi(x^{*},x_{n})=\lim_{n\rightarrow\infty}\phi(x^{*},z_{n}),
\end{equation}
for all $x^{*} \in \Omega$. Using Lemma \ref{lem1} $(ii)$, we have
\begin{equation}\label{equa35}
(1-2\lambda_{n}c_{1})\phi(y_{n},x_{n})+(1-2\lambda_{n}c_{2})\phi(z_{n},y_{n})\leq\phi(x^{*},x_{n})-\phi(x^{*},z_{n}).
\end{equation}
for all $x^{*} \in \Omega$. Taking the limits as
$n\rightarrow\infty$ in inequality (\ref{equa35}), using equality
(\ref{equa34}), we get
\begin{equation}\label{equa10}
\lim\limits_{n\rightarrow\infty}\phi(y_{n},x_{n})=0\quad \& \quad
\lim\limits_{n\rightarrow\infty}\phi(z_{n},y_{n})=0.
\end{equation}
Since $\{x_{n}\}$ and $\{z_{n}\}$ are bounded, it follows from Lemma \ref{lem5} that
\begin{equation*}
\lim\limits_{n\rightarrow\infty}\|y_{n}-x_{n}\|=0\quad \& \quad
\lim\limits_{n\rightarrow\infty}\|z_{n}-y_{n}\|=0,
\end{equation*}
which imply $\{y_{n}\}$ and $\{z_{n}\}$ converges strongly to
$\bar{x}\in C.$
\par
Now, we prove that $\bar{x}\in E(f)$. It follows from the definition
of $y_{n}$ that
\begin{equation}\label{equa36}
\lambda_{n}f(x_{n},y_{n})+\frac{1}{2}\phi(y_{n},x_{n})\leq
\lambda_{n}f(x_{n},y)+\frac{1}{2}\phi(y,x_{n})
\end{equation}
for all $y\in C$. By letting $n\rightarrow\infty$ in inequality
(\ref{equa36}), it follows from equality (\ref{equa10}), conditions
$(A1)$ and $(A3)$ and uniformly norm-to-norm continuity of $J$  on
bounded sets that
$$0\leq f(\bar{x},y)+\phi(y,\bar{x}),$$
because of $0<\lambda_{min}\leq\lambda_{n}\leq\lambda_{max}\leq1$.
Hence, letting
 $\phi(y,\bar{x})=L(\bar{x},y)$, Lemma \ref{lem2} implies that $\bar{x}\in E(f)$.
 \par
Now, since $z_{n}\rightharpoonup\bar{x}$, from (\ref{equa12}), we get $\bar{x}\in \hat{F}(S)$. So,
 using the definition of $S$, we have $\bar{x}\in F(S).$ Setting $z=u^{*}$ in Lemma \ref{lem4}, since $x_{n+1}=R_{C_{n}\cap D_{n}}x_{0}$ and $\phi$ is continuous respect to the first argument, we obtain
$$\phi(u^{*},x_{0})\geq\lim\limits_{n\rightarrow\infty}\phi(x_{n+1},x_{0})=\lim\limits_{n\rightarrow\infty}\phi(x_{n},x_{0})=\phi(\bar{x},x_{0}),$$
also, using Lemma \ref{eq38}, we have
$$\phi(u^{*},x_{0})\leq\phi(y,x_{0}),$$
for all $y\in \Omega,$ because of $u^{*}=R_{\Omega}x_{0}$.
Therefore $\bar{x}=u^{*}$ and consequently the sequences
$\{x_{n}\}_{n=1}^{\infty}$, $\{y_{n}\}_{n=1}^{\infty}$,
$\{z_{n}\}_{n=1}^{\infty}$ and $\{t_{n}\}_{n=1}^{\infty}$ converge
strongly to $R_{\Omega}{x_{0}}$.
\end{proof}
\begin{rk}
If $E$ is a real Hilbert space, then Theorem \ref{eq25} is the
same Theorem $3.1$ in \cite{Vuo} for a nonexpansive mapping $S$
with different proof, provided that the sequence $\{\alpha_{n}\}$
satisfies the conditions of Step $0$ of Algorithm $1$.
\end{rk}
\section{A Linesearch Algorithm}
As we see in the previous section, $\phi$-Lipschitz-type condition
$(A 5)$ depends on two positive parameters $c_{1}$ and $c_{2}$. In
some cases, these parameters  are unknown or difficult to
approximate. To avoid this difficulty, using linesearch method, we
modify Extragradient Algorithm. We prove strong convergence of
this new algorithm without assuming the $\phi$-Lipschitz-type
condition. linesearch method has a good efficiency in numerical
tests.

\par
Here, we assume that bifunction $f:\Delta\times \Delta\rightarrow\mathbb{R}$ satisfies in conditions $(A1)$, $(A2)$ and $(A4)$ and also in following condition which $C$ is nonempty, convex and closed of $2$-uniformly convex, uniformly
smooth Banach space $E$ and $\Delta$ is an open convex set containing $C$,\\
($A3^{*}$) $f$ is jointly weakly continuous on $\Delta\times\Delta$, i.e., if $x,y\in C$ and $\{x_{n}\}$ and $\{y_{n}\}$ are two sequences in $\Delta$ converging weakly to $x$ and $y$, respectively, then $f(x_{n},y_{n})\rightarrow f(x,y).$\\
\par
{\bf Algorithm $2$}
\begin{description}
\item[Step 0.] Let $\alpha\in(0,1)$ , $\gamma \in (0,1)$ and suppose that
$\{\alpha_{n}\}\subseteq [a,e]$ for some $0<a < e<1$,
$\{\beta_{n}\}\subseteq [d,b]$ for some $0<d < b<1$,
$\{\lambda_{n}\}\subseteq [\lambda,1]$, where
 $0<\lambda\leq1$ and $0<\nu<\frac{c^{2}}{2}$, where $\frac{1}{c}$ ($0<c\leq 1$) is the $2-$uniformly convexity constant of $E$.
 \item[Step 1.] Let $x_{0}\in C$. set $n=0.$
 \item[Step 2.] Obtain the unique optimal solution $y_{n}$ by Solving the following convex problem
 \begin{equation}\label{equa38}
 \min_{y\in C}\{\lambda_{n}f(x_{n},y)+\dfrac{1}{2}\phi(x_{n},y)\}
 \end{equation}
 \item[Step 3.] If $y_{n}=x_{n}$, then set $z_{n}=x_{n}$. Otherwise
 \begin{description}
 \item[Step 3.1.] Find $m$ the smallest nonnegative integer such that
 \begin{equation}\label{equa41}
\begin{cases}
f(z_{n,m},x_{n})-f(z_{n,m},y_{n})\geq\frac{\alpha}{2\lambda_{n}}\phi(y_{n},x_{n})\: where\\
z_{n,m}=(1-\gamma^{m})x_{n}+\gamma^{m}y_{n}.
\end{cases}
\end{equation}
 \item[Step 3.2.] Set $\rho_{n}=\gamma^{m}$, $z_{n}=z_{n,m}$ and go to Step 4.
  \end{description}
  \item[Step 4.] Choose $g_{n}\in\partial _{2} f(z_{n},x_{n})$ and compute $w_{n}=R_{C}J^{-1}(Jx_{n}-\sigma_{n}g_{n})$. If $y_{n}\neq x_{n}$, then $\sigma_{n}=\dfrac{\nu f(z_{n},x_{n})}{\Vert g_{n}\Vert^{2}}$ and $\sigma_{n}=0$ otherwise.
 \item[Step 5.] Compute $t_{n}=J^{-1}(\alpha_{n}Jx_{n}+(1-\alpha_{n})(\beta_{n}Jw_{n}+(1-\beta_{n})JSw_{n})).$
 If $y_{n}=x_{n}$ and $t_{n}=x_{n},$ then STOP: $x_{n}\in E(f)\cap F(S)$. Otherwise, go to Step 6.
 \item[Step 6.] Compute $x_{n+1}=R_{C_{n}\cap D_{n}}x_{0}$, where
 $$C_{n}=\{z\in C: \phi(z,t_{n})\leq\phi(z,x_{n})\},$$
$$D_{n}=\{z\in C: \langle Jx_{n}-Jz,x_{0}-x_{n}\rangle\geq 0\}.$$
 \item[Step 7.] Set n:=n+1, and go to Step 2.
 \end{description}
 The following lemma shows that linesearch corresponding to $x_{n}$ and $y_{n}$ (Step 3.1) is well defined.
\begin{lm}\label{lem18}
  Assume that  $y_{n}=x_{n}$ for some $n\in\mathbb{N}$. Then
\begin{enumerate}
\item[(i)]
There exists a nonnegative integer $m$ such that the inequality in (\ref{equa41}) holds.
\item[(ii)]
$f(z_{n},x_{n})>0.$
\item[(iii)]
$0\notin \partial_{2}f(z_{n},x_{n}).$
 \end{enumerate}
 \end{lm}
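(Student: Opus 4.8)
The plan is to establish the three items in the order (i)$\to$(ii)$\to$(iii), each feeding the next. Throughout I read the hypothesis as $y_{n}\neq x_{n}$ (if $y_{n}=x_{n}$, Step~3 sets $z_{n}:=x_{n}$ and no linesearch is run, and then $f(z_{n},x_{n})=f(x_{n},x_{n})=0$ would contradict~(ii)), and I read the quadratic term of the subproblem (\ref{equa38}) as $\tfrac12\phi(y,x_{n})$, consistently with (\ref{equa41}) and with Algorithm~1. For (i) I would argue by contradiction: suppose the inequality in (\ref{equa41}) fails for \emph{every} nonnegative integer $m$, i.e.
$$f(z_{n,m},x_{n})-f(z_{n,m},y_{n})<\tfrac{\alpha}{2\lambda_{n}}\phi(y_{n},x_{n})\qquad\text{for all }m\geq0,$$
where $z_{n,m}=(1-\gamma^{m})x_{n}+\gamma^{m}y_{n}$. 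Since $\gamma\in(0,1)$ we have $\gamma^{m}\to0$, hence $z_{n,m}\to x_{n}$ in norm and so weakly; as $x_{n},y_{n},z_{n,m}$ all lie in the convex set $\Delta$, condition $(A3^{*})$ gives $f(z_{n,m},x_{n})\to f(x_{n},x_{n})=0$ and $f(z_{n,m},y_{n})\to f(x_{n},y_{n})$ as $m\to\infty$. Letting $m\to\infty$ in the displayed inequality yields $-f(x_{n},y_{n})\leq\tfrac{\alpha}{2\lambda_{n}}\phi(y_{n},x_{n})$. On the other hand, the optimality of $y_{n}$ in (\ref{equa38}), tested against the feasible point $y=x_{n}\in C$ and combined with $(A1)$ and $\phi(x_{n},x_{n})=0$, gives $\lambda_{n}f(x_{n},y_{n})+\tfrac12\phi(y_{n},x_{n})\leq0$, i.e. $-f(x_{n},y_{n})\geq\tfrac{1}{2\lambda_{n}}\phi(y_{n},x_{n})$. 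The two estimates force $(1-\alpha)\phi(y_{n},x_{n})\leq0$; since $\alpha<1$ this gives $\phi(y_{n},x_{n})=0$, hence $y_{n}=x_{n}$ by (\ref{equa21}) --- a contradiction. So some $m$ satisfies (\ref{equa41}).

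For (ii), let $m$ be the smallest such integer, $\rho_{n}=\gamma^{m}\in(0,1]$, and $z_{n}=z_{n,m}=(1-\rho_{n})x_{n}+\rho_{n}y_{n}$. Then (\ref{equa41}) together with (\ref{equa21}) (using $y_{n}\neq x_{n}$) gives $f(z_{n},x_{n})-f(z_{n},y_{n})\geq\tfrac{\alpha}{2\lambda_{n}}\phi(y_{n},x_{n})>0$. Next I use convexity of $f(z_{n},\cdot)$ (condition $(A4)$) together with $f(z_{n},z_{n})=0$ (condition $(A1)$), applied to the convex combination $z_{n}=(1-\rho_{n})x_{n}+\rho_{n}y_{n}$:
$$0=f(z_{n},z_{n})\leq(1-\rho_{n})f(z_{n},x_{n})+\rho_{n}f(z_{n},y_{n}),$$
so $f(z_{n},y_{n})\geq-\tfrac{1-\rho_{n}}{\rho_{n}}f(z_{n},x_{n})$. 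Substituting into the preceding strict inequality gives $f(z_{n},x_{n})>-\tfrac{1-\rho_{n}}{\rho_{n}}f(z_{n},x_{n})$, that is $\tfrac{1}{\rho_{n}}f(z_{n},x_{n})>0$; since $\rho_{n}>0$, we get $f(z_{n},x_{n})>0$.

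Part (iii) is then immediate: if $0\in\partial_{2}f(z_{n},x_{n})$, the definition of the subdifferential of $f(z_{n},\cdot)$ at $x_{n}$ gives $f(z_{n},y)\geq f(z_{n},x_{n})$ for all $y$ in the domain; choosing $y=z_{n}$ and invoking $(A1)$ yields $0=f(z_{n},z_{n})\geq f(z_{n},x_{n})$, which contradicts (ii). Hence $0\notin\partial_{2}f(z_{n},x_{n})$. I expect the only real difficulty to be in (i): one must legitimately pass to the limit $m\to\infty$ --- which is precisely where joint weak continuity on the \emph{open} set $\Delta\supseteq C$ is needed, since the linesearch points $z_{n,m}$ are only guaranteed to lie in $\Delta$, not in $C$ --- and must check that the limiting inequality clashes \emph{strictly} with the optimality estimate for $y_{n}$, which it does exactly because $\alpha<1$ and $\phi(y_{n},x_{n})>0$.
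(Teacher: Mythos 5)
Your proof is correct and follows essentially the same route as the paper: part (i) by contradiction via passing to the limit $m\to\infty$ with $(A3^{*})$ and clashing with the optimality inequality for $y_{n}$, and part (ii) via convexity of $f(z_{n},\cdot)$ at the convex combination $z_{n}=(1-\rho_{n})x_{n}+\rho_{n}y_{n}$ (your algebra is equivalent to the paper's $f(z_{n},x_{n})\geq\rho_{n}[f(z_{n},x_{n})-f(z_{n},y_{n})]$). The only difference is that for (iii) the paper defers to a citation of Tran et al., whereas you supply the short direct argument (test $y=z_{n}$ in the subgradient inequality and contradict (ii)), which is the standard proof; you are also right that the stated hypothesis should read $y_{n}\neq x_{n}$.
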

 \begin{proof}
 %The proof of lemma is similar to proof of \cite{ Tr, $Lemma 4.5 and$ Vuo, $proposition 4.1$}.
 Suppose that $n\in \mathbb{N}$. Assume towards a contradiction that for each nonnegative integer $m$,
 \begin{equation}\label{equa15}
f(z_{n,m},x_{n})-f(z_{n,m},y_{n})<\frac{\alpha}{2\lambda_{n}}\phi(y_{n},x_{n}),
 \end{equation}
 where $z_{n,m}=(1-\gamma^{m})x_{n}+\gamma^{m}y_{n}.$
It is easy to see that $z_{n,m}\rightarrow x_{n}$ as $m\rightarrow\infty$. Using condition $(A3^{*})$, we obtain
 \begin{equation}
  f(z_{n,m},x_{n})\rightarrow f(x_{n},x_{n})\quad \& \quad
   f(z_{n,m},y_{n})\rightarrow f(x_{n},y_{n}).
  \end{equation}
  Since $f(x_{n},x_{n})=0$, letting $m\rightarrow\infty$ in inequality (\ref{equa15}), we get
\begin{equation}\label{equa16}
0\leq f(x_{n},y_{n})+\frac{\alpha}{2\lambda_{n}}\phi(y_{n},x_{n}).
\end{equation}
% Then $\{z_{n,m}\}_{m}$ converges strongly to $x_{n}$ as $m\rightarrow\infty$, and thus also weakly. Since $f(.,x)$ is weakly continuous on an open set $\Delta\supset C$ for every $x\in\Delta$, if follows that  So, taking the limit on $m$ in (\ref{equa15}), yields
Because of  $y_{n}$ is a solution of the convex optimization problem (\ref{equa38}), we deduce
 $$\lambda_{n}f(x_{n},y)+\frac{1}{2}\phi(y,x_{n})\geq\lambda_{n}f(x_{n},y_{n})+\frac{1}{2}\phi(y_{n},x_{n}),$$
 for all $y\in C$. If $y=x_{n}$, then
\begin{equation}\label{equa40}
\lambda_{n}f(x_{n},y_{n})+\frac{1}{2}\phi(y_{n},x_{n})\leq 0.
\end{equation}
 It follows from inequalities (\ref{equa16}) and (\ref{equa40}) that
 \begin{equation}\label{equa39}
f(x_{n},y_{n})+\frac{1}{2\lambda_{n}}\phi(y_{n},x_{n})\leq f(x_{n},y_{n})+\frac{\alpha}{2\lambda_{n}}\phi(y_{n},x_{n}).
\end{equation}
 Therefore from inequality (\ref{equa39}), we obtain
 $$\frac{1-\alpha}{2}\phi(y_{n},x_{n})\leq 0,$$
 since $\lambda_{n}\leq1$.
It follows from (\ref{equa21}) that $\phi(y_{n},x_{n})>0$, because of $y_{n}\neq x_{n}$.
Thus, $1-\alpha\leq0$ or $\alpha\geq 1$ where contradict the assumption $\alpha\in(0,1)$. So, $(i)$ is proved.
\par
 Now, we prove $(ii)$. Since $f$ is convex, we obtain
 \begin{equation}\label{equa42}
\rho_{n}f(z_{n},y_{n})+(1-\rho_{n})f(z_{n},x_{n})\geq f(z_{n},z_{n})=0.
\end{equation}
consequently from inequality (\ref{equa42}), we get
 $$f(z_{n},x_{n})\geq \rho_{n}[f(z_{n},x_{n})-f(z_{n},y_{n})]\geq\frac{\alpha\rho_{n}}{\lambda_{n}}\phi(y_{n},x_{n})>0,$$
because of $y_{n}\neq x_{n}$. Therefore $f(z_{n},x_{n})>0.$\\
The proof $(iii)$ can be found in \cite{Tr} (Lemma $4.5$).
\end{proof}
\begin{rk}
If $E$ is a real Hilbert space, then Lemma \ref{lem18} is reduced
to Proposition $4.1$ in \cite{Vuo} when $\alpha\in (0,1).$
%with different proof, when the sequence $\{\alpha_{n}\}$ satisfies in Step $0$ of Algorithm $2$.
\end{rk}
 We examine the stopping condition in the following lemma where its proof is similar to the proof Lemma \ref{lem6}.
\begin{lm}\label{lem17}
Let $y_{n}=x_{n}$, then $x_{n}\in E(f)$. If $y_{n}=x_{n}$ and $t_{n}=x_{n}$, then $w_{n}=x_{n}$ and $x_{n}\in E(f)\cap F(S)$.
\end{lm}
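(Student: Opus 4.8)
The plan is to follow closely the argument of Lemma~\ref{lem6}, since the only structural change in Algorithm~2 is that the point $z_n$ there is replaced by $w_n$; beyond that, only one elementary observation about $w_n$ has to be inserted.

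First I would prove the implication $y_n=x_n\Rightarrow x_n\in E(f)$. Since $y_n$ is the unique minimizer of the convex problem in Step~2, for every $y\in C$ one has
\[
\lambda_n f(x_n,y_n)+\tfrac12\phi(y_n,x_n)\le \lambda_n f(x_n,y)+\tfrac12\phi(y,x_n).
\]
Using $y_n=x_n$, condition $(A1)$ and the equivalence (\ref{equa21}), the left-hand side vanishes, so $0\le\lambda_n f(x_n,y)+\tfrac12\phi(y,x_n)$ for all $y\in C$; as $0<\lambda\le\lambda_n\le1$ this yields $0\le f(x_n,y)+\tfrac12\phi(y,x_n)$ for all $y\in C$. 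Setting $L(x_n,y):=\phi(y,x_n)$, one checks that $L(x_n,\cdot)$ is nonnegative (by (\ref{eq14})), convex in its second variable, and, since $E$ is smooth, differentiable with $L(x_n,x_n)=0$ and $\nabla_2L(x_n,x_n)=2Jx_n-2Jx_n=0$; hence Lemma~\ref{lem2} applies and gives $x_n\in E(f)$.

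Next, assuming in addition $t_n=x_n$, I would proceed as follows. Because $y_n=x_n$, Step~4 sets $\sigma_n=0$, whence $w_n=R_CJ^{-1}(Jx_n)=R_Cx_n=x_n$, using that $J^{-1}J=I_E$ (as $E$ is smooth, strictly convex and reflexive) and that the retraction $R_C$ fixes the points of $C$. Substituting $w_n=x_n$ and $t_n=x_n$ into the defining relation of $t_n$ in Step~5 and applying $J$ gives
\[
Jx_n=\alpha_nJx_n+(1-\alpha_n)\bigl(\beta_nJx_n+(1-\beta_n)JSx_n\bigr).
\]
Cancelling and using $1-\alpha_n>0$ and $1-\beta_n>0$ forces $Jx_n=JSx_n$; since $E$ is $2$-uniformly convex, hence strictly convex, $J$ is one-to-one, so $x_n=Sx_n$, that is $x_n\in F(S)$. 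Combined with the first part, this proves $x_n\in E(f)\cap F(S)$.

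There is no serious obstacle here: the argument is essentially a transcription of the proof of Lemma~\ref{lem6}, and the only genuinely new point is the elementary verification in the last paragraph that, when $y_n=x_n$, the subgradient/linesearch step in Step~4 collapses ($\sigma_n=0$), so that $w_n=x_n$ and the computation reduces exactly to the one already carried out for Algorithm~1.
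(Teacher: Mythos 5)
Your proof is correct and is exactly the adaptation of Lemma~\ref{lem6} that the paper intends (the paper omits the proof, remarking only that it is similar to that of Lemma~\ref{lem6}); the one genuinely new ingredient, namely that $y_n=x_n$ forces $\sigma_n=0$ in Step~4 and hence $w_n=R_CJ^{-1}Jx_n=x_n$, is supplied correctly. The only cosmetic remark is that the inequality $0\le f(x_n,y)+\tfrac12\phi(y,x_n)$ need not (and does not quite) follow by dividing by $\lambda_n$; it is cleaner to apply Lemma~\ref{lem2} directly to $0\le\lambda_n f(x_n,y)+\tfrac12\phi(y,x_n)$ with $\rho=\lambda_n$ and $L=\tfrac12\phi$, which is how the analogous step is implicitly handled in Lemma~\ref{lem6}.
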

\begin{rk}
If $E$ is a real Hilbert space, then Lemma \ref{lem17} is the same
Proposition $4.2$ in \cite{Vuo} with different proof, provided
that the sequence $\{\alpha_{n}\}$ satisfies the conditions of
Step $0$ of Algorithm $2$.
\end{rk}
 %The proof of following lemma can be found in \cite{Vuo} (Proposition $4.3$).
 \begin{lm}\label{lem14}
  Suppose that $f:\Delta\times \Delta\rightarrow\mathbb{R}$ is a bifunction satisfying conditions $(A3^{*})$ and $(A4)$. Let $\{x_{n}\}$ and $\{z_{n}\}$ be two sequences in $\Delta$ such that Suppose  $x_{n} \rightarrow \bar{x}$ and $z_{n}\rightharpoonup \bar{z}$, where $\bar{x}, \bar{z} \in \Delta$. Then we have
   %such that converging strongly to $\bar{x}$ and converging weakly to  $\bar{z}$, then %for any $\varepsilon>0$, there exist $\eta>0$ and $n_{\varepsilon}\in\mathbb{N}$ such that
 $$\partial_{2}f(z_{n},x_{n})\subseteq\partial_{2}f(\bar{z},\bar{x}).$$
% for every $n\geq n_{\varepsilon}$, where $B$ denotes the closed unit ball in $E^{*}$.
 \end{lm}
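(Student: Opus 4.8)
The plan is to fix an arbitrary selection $g_{n}\in\partial_{2}f(z_{n},x_{n})$; by the definition of the subdifferential it satisfies
\[
f(z_{n},y)\ \geq\ f(z_{n},x_{n})+\langle g_{n},\,y-x_{n}\rangle,\qquad\forall\,y\in\Delta ,
\]
and the goal is to prove that $\{g_{n}\}$ is bounded and that every weak cluster point $\bar g$ of $\{g_{n}\}$ lies in $\partial_{2}f(\bar z,\bar x)$ (which is how the inclusion in the statement is to be read). Since $E$ is $2$-uniformly convex, it is uniformly convex and hence reflexive, so that $E^{*}$ is reflexive as well; consequently bounded sequences in $E$ and in $E^{*}$ have weakly convergent subsequences, and once boundedness of $\{g_{n}\}$ is established such cluster points exist.

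\emph{Step 1: boundedness of the selections.} I would argue by contradiction, assuming $\|g_{n_{k}}\|\to\infty$ along a subsequence. Since $\Delta$ is open and $x_{n}\to\bar x\in\Delta$, fix $r>0$ with $\{u\in E:\|u-\bar x\|\le 2r\}\subseteq\Delta$, so that $x_{n}+r u\in\Delta$ for all $u$ with $\|u\|\le 1$ and all large $n$. Choose unit vectors $u_{n}$ with $\langle g_{n},u_{n}\rangle\ge\frac{1}{2}\|g_{n}\|$ and substitute $y=x_{n}+r u_{n}$ into the subgradient inequality to get
\[
\frac{r}{2}\,\|g_{n}\|\ \le\ r\,\langle g_{n},u_{n}\rangle\ \le\ f(z_{n},\,x_{n}+r u_{n})-f(z_{n},x_{n}).
\]
By reflexivity some subsequence of $\{u_{n}\}$ converges weakly, hence $x_{n}+r u_{n}$ converges weakly and $x_{n}\to\bar x$, while $z_{n}\rightharpoonup\bar z$; condition $(A3^{*})$ then forces the right-hand side to converge to a finite limit, contradicting $\|g_{n_{k}}\|\to\infty$. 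Therefore $M:=\sup_{n}\|g_{n}\|<\infty$. (Alternatively, one can combine Lemma~\ref{lem20} with the observation that the convex functions $f(z_{n},\cdot)$ converge pointwise on the open set $\Delta$ to the finite, continuous convex function $f(\bar z,\cdot)$, which yields the same uniform local bound.)

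\emph{Step 2: passage to the limit.} Since $E^{*}$ is reflexive, pass to a further subsequence with $g_{n}\rightharpoonup\bar g$ in $E^{*}$. Fix $y\in\Delta$ and let $n\to\infty$ in the subgradient inequality: by $(A3^{*})$ we have $f(z_{n},y)\to f(\bar z,y)$ and $f(z_{n},x_{n})\to f(\bar z,\bar x)$ (using $z_{n}\rightharpoonup\bar z$ and $x_{n}\to\bar x$), and
\[
\langle g_{n},\,y-x_{n}\rangle=\langle g_{n},\,y-\bar x\rangle+\langle g_{n},\,\bar x-x_{n}\rangle\ \longrightarrow\ \langle\bar g,\,y-\bar x\rangle ,
\]
because $\langle g_{n},y-\bar x\rangle\to\langle\bar g,y-\bar x\rangle$ and $|\langle g_{n},\bar x-x_{n}\rangle|\le M\|\bar x-x_{n}\|\to 0$. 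Hence $f(\bar z,y)\ge f(\bar z,\bar x)+\langle\bar g,y-\bar x\rangle$ for every $y\in\Delta$, i.e.\ $\bar g\in\partial_{2}f(\bar z,\bar x)$, as required. The step I expect to be the main obstacle is Step 1: the boundedness of each single set $\partial_{2}f(z_{n},x_{n})$ is immediate from Lemma~\ref{lem20}, but making the bound independent of $n$ requires genuinely using the openness of $\Delta$ together with the joint weak continuity $(A3^{*})$ to keep $f(z_{n},\cdot)$ under control in a fixed neighbourhood of $\bar x$; once $\{g_{n}\}$ is bounded, Step 2 is routine limit-taking.
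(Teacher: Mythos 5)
Your argument is correct, but note that it proves a (sensibly) reinterpreted statement and is genuinely more careful than the paper's own proof. The paper fixes $x^{*}\in\partial_{2}f(z_{n},x_{n})$ and then ``takes the limit as $n\rightarrow\infty$'' in the subgradient inequality while keeping $x^{*}$ fixed; read literally, this only works for an $x^{*}$ that happens to lie in $\partial_{2}f(z_{n},x_{n})$ for all large $n$, and the stated inclusion $\partial_{2}f(z_{n},x_{n})\subseteq\partial_{2}f(\bar{z},\bar{x})$ for each fixed $n$ cannot hold in general (already for $f(z,y)=\langle a(z),y\rangle$ with $a$ nonconstant the two subdifferentials are distinct singletons). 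You instead prove the Kuratowski-type upper semicontinuity statement: every sequence of selections $g_{n}\in\partial_{2}f(z_{n},x_{n})$ is bounded and all of its weak cluster points belong to $\partial_{2}f(\bar{z},\bar{x})$. Your Step 2 is exactly the paper's limit computation, made legitimate by first extracting a weak limit $\bar{g}$ of the $g_{n}$, which in turn requires your Step 1. That uniform bound on $\|g_{n}\|$ (via openness of $\Delta$, norming vectors $u_{n}$, and $(A3^{*})$) has no counterpart in the paper; the paper instead invokes Lemma \ref{lem20} \emph{after} the inclusion to obtain boundedness, which only works if the inclusion itself holds. Since the only downstream use of this lemma in the proof of Theorem \ref{thm1} is precisely the boundedness of $\{t^{n}(x_{n})\}$ and of $\{g_{n}\}$, your version is the statement actually needed there. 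Two small caveats: $(A3^{*})$ as stated only covers weak limits lying in $C$, while both you and the paper apply it at general points of $\Delta$ (a shared, harmless abuse); and your parenthetical alternative for Step 1 (pointwise convergence of convex functions forcing a uniform local bound) requires more justification in infinite dimensions, so the direct norming-functional argument you give first is the one to keep.
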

 \begin{proof}
 Let $x^{*}\in \partial_{2}f(z_{n},x_{n})$, It follows from condition $(A4)$ and the definition of $\partial_{2}f$ that
 \begin{equation*}\label{equa51}
 f(z_{n},x)\geq f(z_{n},x_{n})+\langle x^{*},x-x_{n}\rangle,
 \end{equation*}
 for all $x \in \Delta$. Taking the limits as $n\rightarrow\infty$, using the condition $(A3^{*})$, we give
 $$f(\bar{z},x)\geq f(\bar{z},\bar{x})+\langle x^{*},x-\bar{x}\rangle,$$
 for all $x \in \Delta$. Hence, $x^{*}\in \partial_{2}f(\bar{z},\bar{x})$.
 \end{proof}
 Now, we prove the following proposition for Algorithm $2$, which have important role in the proof of main result in this section.
 \begin{prop}\label{lem9}
 For all $x^{*}\in E(f)\cap F(S)$ and all $n\in \mathbb{N}$, we get
 \begin{enumerate}
 \item[(i)]
$\phi(x^{*},w_{n})\leq\phi(x^{*},x_{n})-(\frac{2}{\nu}-\frac{4}{c^{2}})\sigma_{n}^{2}\|g_{n}\|^{2},$
\item[(ii)]
$\phi(x^{*},t_{n})\leq\phi(x^{*},x_{n})-(1-\alpha_{n})(\frac{2}{\nu}-\frac{4}{c^{2}})\sigma_{n}^{2}\|g_{n}\|^{2}.$
 \end{enumerate}
 \end{prop}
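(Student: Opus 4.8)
The plan is to establish (i) first, using the defining relations for $w_n$ and the variational characterization of the sunny generalized nonexpansive retraction $R_C$, and then to deduce (ii) from (i) together with the convexity of $\|\cdot\|^2$ exactly as in the proof of Lemma~\ref{lem3}. For (i), the starting point is the identity $w_n = R_C J^{-1}(Jx_n - \sigma_n g_n)$. I would first handle the trivial case $y_n = x_n$: then $\sigma_n = 0$, so $w_n = R_C J^{-1}(Jx_n) = R_C x_n = x_n$ (since $x_n \in C$), and both sides of (i) reduce to $\phi(x^*,x_n)$. So assume $y_n \neq x_n$, whence by Lemma~\ref{lem18}(ii),(iii) we have $f(z_n,x_n) > 0$ and $g_n \neq 0$, so $\sigma_n = \nu f(z_n,x_n)/\|g_n\|^2$ is well defined and positive.

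For the main estimate, set $u_n = J^{-1}(Jx_n - \sigma_n g_n)$, so $w_n = R_C u_n$. Using Lemma~\ref{lem4}(2) with the pair $(u_n, w_n)\in E\times C$ gives $\phi(x^*,w_n) \leq \phi(x^*,u_n)$ for $x^*\in C$ (in particular for $x^*\in E(f)\cap F(S)$). Next I would expand $\phi(x^*,u_n) = V(x^*, Jx_n - \sigma_n g_n)$ and apply inequality (\ref{eq5}) in the form $V(x^*,\xi^*) \leq V(x^*,\xi^* + \eta^*) - 2\langle J^{-1}(\xi^*+\eta^*) - x^*, \eta^*\rangle$; taking $\xi^* = Jx_n - \sigma_n g_n$ and $\eta^* = \sigma_n g_n$ yields
\[
\phi(x^*,u_n) \leq V(x^*, Jx_n) + 2\langle x_n - u_n, \sigma_n g_n\rangle - 2\sigma_n\langle x_n - x^*, g_n\rangle = \phi(x^*,x_n) + 2\sigma_n\langle x_n - u_n, g_n\rangle - 2\sigma_n\langle x_n - x^*, g_n\rangle.
\]
Now $g_n \in \partial_2 f(z_n,x_n)$ gives the subgradient inequality $f(z_n,y) \geq f(z_n,x_n) + \langle g_n, y - x_n\rangle$ for all $y\in\Delta$; putting $y = x^*$ and using pseudomonotonicity together with $f(x^*,z_n)\geq 0$ to get $f(z_n,x^*)\leq 0$, I obtain $\langle x_n - x^*, g_n\rangle \geq f(z_n,x_n) > 0$. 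Hence the term $-2\sigma_n\langle x_n - x^*, g_n\rangle \leq -2\sigma_n f(z_n,x_n) = -2\sigma_n^2\|g_n\|^2/\nu$ by the definition of $\sigma_n$. For the remaining term $2\sigma_n\langle x_n - u_n, g_n\rangle$, I would use Lemma~\ref{eq6} to bound $\|x_n - u_n\| = \|J^{-1}Jx_n - J^{-1}(Jx_n - \sigma_n g_n)\| \leq \frac{2}{c^2}\|\sigma_n g_n\| = \frac{2}{c^2}\sigma_n\|g_n\|$, so by Cauchy--Schwarz $2\sigma_n\langle x_n - u_n, g_n\rangle \leq 2\sigma_n\|x_n - u_n\|\|g_n\| \leq \frac{4}{c^2}\sigma_n^2\|g_n\|^2$. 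Combining these three pieces gives $\phi(x^*,w_n) \leq \phi(x^*,x_n) + \frac{4}{c^2}\sigma_n^2\|g_n\|^2 - \frac{2}{\nu}\sigma_n^2\|g_n\|^2 = \phi(x^*,x_n) - \big(\frac{2}{\nu} - \frac{4}{c^2}\big)\sigma_n^2\|g_n\|^2$, which is (i).

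For (ii), I proceed as in (\ref{equa37}): since $Jt_n = \alpha_n Jx_n + (1-\alpha_n)(\beta_n Jw_n + (1-\beta_n)JSw_n)$, convexity of $\|\cdot\|^2$ and the definition of $\phi$ give
\[
\phi(x^*,t_n) \leq \alpha_n\phi(x^*,x_n) + (1-\alpha_n)\beta_n\phi(x^*,w_n) + (1-\alpha_n)(1-\beta_n)\phi(x^*,Sw_n).
\]
Since $S$ is relatively nonexpansive and $x^*\in F(S)$, $\phi(x^*,Sw_n) \leq \phi(x^*,w_n)$, so $\phi(x^*,t_n) \leq \alpha_n\phi(x^*,x_n) + (1-\alpha_n)\phi(x^*,w_n)$. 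Substituting (i) into the last term and noting $\alpha_n\phi(x^*,x_n) + (1-\alpha_n)\phi(x^*,x_n) = \phi(x^*,x_n)$ yields $\phi(x^*,t_n) \leq \phi(x^*,x_n) - (1-\alpha_n)\big(\frac{2}{\nu} - \frac{4}{c^2}\big)\sigma_n^2\|g_n\|^2$, which is (ii). The main obstacle is part (i): one must correctly combine the retraction inequality, the dual inequality (\ref{eq5}), the subgradient/pseudomonotonicity estimate, and the $2$-uniform convexity bound of Lemma~\ref{eq6}, keeping careful track of the constants so that the coefficient $\frac{2}{\nu} - \frac{4}{c^2}$ emerges (note the standing hypothesis $0 < \nu < c^2/2$ guarantees this coefficient is positive, so the estimates are genuinely decreasing). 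Once (i) is in hand, (ii) is a routine convexity argument identical in structure to Lemma~\ref{lem3}.
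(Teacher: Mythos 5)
Your proof is correct and follows essentially the same route as the paper: the retraction inequality of Lemma \ref{lem4}, inequality (\ref{eq5}) applied with $\eta^{*}=\sigma_{n}g_{n}$, the subgradient/pseudomonotonicity bound $\langle x_{n}-x^{*},g_{n}\rangle\geq f(z_{n},x_{n})=\sigma_{n}\|g_{n}\|^{2}/\nu$, and Lemma \ref{eq6} for the $\frac{4}{c^{2}}$ term, followed by the Lemma \ref{lem3}-style convexity argument for (ii). (One cosmetic remark: in your restatement of (\ref{eq5}) the inner product should involve $J^{-1}\xi^{*}=u_{n}$ rather than $J^{-1}(\xi^{*}+\eta^{*})=x_{n}$, but the displayed inequality you actually derive is the correct one, so nothing is affected.)
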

 \begin{proof}
Using Lemma \ref{lem4}, the definition of $V$ and inequality (\ref{eq5}), we have
 \begin{equation}
 \begin{aligned}
 \phi(x^{*},w_{n})&=\phi(x^{*},R_{C}J^{-1}(Jx_{n}-\sigma_{n}g_{n}))\\
 &\leq\phi(x^{*},J^{-1}(Jx_{n}-\sigma_{n}g_{n}))\\
 &=V(x^{*},Jx_{n}-\sigma_{n}g_{n})\\
 &\leq V(x^{*},Jx_{n}-\sigma_{n}g_{n}+\sigma_{n}g_{n})-2\langle J^{-1}(Jx_{n}-\sigma_{n}g_{n})-x^{*},\sigma_{n}g_{n}\rangle\\
 &=\phi(x^{*},x_{n})-2\langle J^{-1}(Jx_{n}-\sigma_{n}g_{n})-x^{*},\sigma_{n}g_{n}\rangle\\
 &=\phi(x^{*},x_{n})-2\sigma_{n}\langle x_{n}-x^{*},g_{n}\rangle+2\langle J^{-1}(Jx_{n}-\sigma_{n}g_{n})-x_{n},-\sigma_{n}g_{n}\rangle.
 \end{aligned}
 \end{equation}
 Since $g_{n}\in \partial_{2}f(z_{n},x_{n})$, we get
 \begin{equation*}
\langle x_{n}-x^{*},g_{n}\rangle\geq f(z_{n},x_{n})-f(z_{n},x^{*})\geq f(z_{n},x_{n})=\frac{\sigma_{n}\|g_{n}\|^{2}}{\nu}.
 \end{equation*}
 Therefore, we obtain
 \begin{equation}\label{equa24}
 -\frac{2}{\nu}\sigma_{n}^{2}\|g_{n}\|^{2}\geq -2\sigma_{n}\langle x_{n}-x^{*},g_{n}\rangle.
 \end{equation}
 On the other hand, from Lemma \ref{eq6}, we get
 \begin{equation}\label{equa25}
 \begin{aligned}
 2\langle J^{-1}(Jx_{n}-&\sigma_{n}g_{n})-x_{n},-\sigma_{n}g_{n}\rangle\\
 &=2\langle J^{-1}(Jx_{n}-\sigma_{n}g_{n})-J^{-1}(Jx_{n}), -\sigma_{n}g_{n}\rangle\\
 &\leq 2\|J(J^{-1}(Jx_{n}-\sigma_{n}g_{n}))-J(J^{-1}Jx_{n})\|\|\sigma_{n}g_{n}\|\\
 &\leq\frac{4}{c^{2}}\sigma_{n}^{2}\|g_{n}\|^{2}.
 \end{aligned}
 \end{equation}
Where $\frac{1}{c}(0< c\leq 1)$ is the $2$-uniformly convex constant of $E$. Thus, combining inequalities (\ref{equa24}) and (\ref{equa25}), we can derive $(i)$. A similar argument as in the proof of Lemma \ref{lem3} shows that
 \begin{equation*}
 \phi(x^{*},t_{n})\leq\alpha_{n}\phi(x^{*},x_{n})+(1-\alpha_{n})\phi(x^{*},w_{n}).
 \end{equation*}
 Using (i), we see that
 \begin{equation*}
 \begin{aligned}
 \phi(x^{*},t_{n}) &\leq\alpha_{n}\phi(x^{*},x_{n})+(1-\alpha_{n})\phi(x^{*},x_{n})-(1-\alpha_{n})(\frac{2}{\nu}-\frac{4}{c^{2}})\sigma_{n}^{2}\|g_{n}\|^{2}\\
 &=\phi(x^{*},x_{n})-(1-\alpha_{n})(\frac{2}{\nu}-\frac{4}{c^{2}})\sigma_{n}^{2}\|g_{n}\|^{2}.
 \end{aligned}
 \end{equation*}
Therefore $(ii)$ is proved.
 \end{proof}
 \begin{thm}\label{thm1}
%Let $C$ be a nonempty closed convex subset of a uniformly
%convex, uniformly smooth Banach space $E$.
 %Assume that $f:C\times
%C\rightarrow \mathbb{R}$ is a bifunction which satisfies
%conditions (A1)-(A4) and
Assume that $S:C\rightarrow C$
is a relatively nonexpansive mapping such that $$\Omega:=E(f)\cap F(S)\neq\emptyset.$$
Then the sequences
$\{x_{n}\}_{n=1}^{\infty}$, $\{y_{n}\}_{n=1}^{\infty}$, $\{z_{n}\}_{n=1}^{\infty}$, $\{w_{n}\}_{n=1}^{\infty}$ and $\{t_{n}\}_{n=1}^{\infty}$ generated
by Algorithm $2$ converge strongly to the some solution $u^{*}\in
\Omega$, where $u^{*}=R_{\Omega}{x_{0}}$ and $R_{\Omega}$ is the sunny generalized nonexpansive retraction from $E$ onto $\Omega$.
\end{thm}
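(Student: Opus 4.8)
The plan is to follow the same skeleton as the proof of Theorem \ref{eq25}, with the $\phi$-Lipschitz-type estimates replaced by Proposition \ref{lem9} and the linesearch analysis of Lemma \ref{lem18}. First I would show by induction that $\Omega\subseteq C_n\cap D_n$ for all $n\geq 0$: the inclusion $\Omega\subseteq C_n$ follows from Proposition \ref{lem9}(ii) (since $(1-\alpha_n)(\tfrac{2}{\nu}-\tfrac{4}{c^2})\sigma_n^2\|g_n\|^2\geq 0$ because $0<\nu<c^2/2$), and $\Omega\subseteq D_n$ follows exactly as before from Lemma \ref{lem4}(1) applied to $x_{n+1}=R_{C_n\cap D_n}x_0$. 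Since each $C_n$ is the sublevel set described by $\phi(\cdot,t_n)\leq\phi(\cdot,x_n)$, i.e. $\langle z,Jx_n-Jt_n\rangle\leq\tfrac12(\|x_n\|^2-\|t_n\|^2)$, it is closed and convex, $JC_n$ and $JD_n$ are closed convex, hence by Lemma \ref{Ibaraki} the retraction $R_{C_n\cap D_n}$ is well defined; so the algorithm is well posed.

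Next I would establish boundedness and the Cauchy property of $\{x_n\}$. Using $x_{n+1}\in D_n$ together with Lemma \ref{lem4}(2) and \eqref{eq36}, the sequence $\{\phi(x_0,x_n)\}$ is nondecreasing and bounded above by $\phi(x_0,u^*)$, hence convergent; by \eqref{eq14} the sequence $\{x_n\}$ is bounded, and $\phi(x_n,x_{n+1})\to 0$, so Lemma \ref{lem5} gives $\|x_n-x_{n+1}\|\to 0$ and $x_n\to\bar x$ for some $\bar x\in C$. Since $x_{n+1}\in C_n$ we get $\phi(x_{n+1},t_n)\to 0$, hence $\|x_{n+1}-t_n\|\to 0$, so $t_n\to\bar x$ and (by norm-to-norm continuity of $J$ on bounded sets) $\phi(x^*,x_n)-\phi(x^*,t_n)\to 0$ for every $x^*\in\Omega$. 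Feeding this into Proposition \ref{lem9}(i)--(ii) yields $\sigma_n^2\|g_n\|^2\to 0$; recalling $\sigma_n=\nu f(z_n,x_n)/\|g_n\|^2$ on the set $y_n\neq x_n$, this says $f(z_n,x_n)^2/\|g_n\|^2\to 0$, and since Lemma \ref{lem20} bounds $\{g_n\}$ (the subgradients of the continuous convex function $f(z_n,\cdot)$ on bounded sets), we conclude $f(z_n,x_n)\to 0$. On the other hand, from Proposition \ref{lem9} and the convexity-of-norm splitting exactly as in \eqref{equa37}, a three-term estimate via Lemma \ref{lem15} gives $\|Jw_n-Jx_n\|\to 0$ and $\|Jw_n-JSw_n\|\to 0$, hence (by \eqref{eq18}) $\|w_n-x_n\|\to 0$ and $\|w_n-Sw_n\|\to 0$, so $w_n\to\bar x$ and $\bar x\in\hat F(S)=F(S)$.

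It then remains to show $\bar x\in E(f)$, and here is where the main work — and the main obstacle — lies: unlike the extragradient case, one must exploit the linesearch step $f(z_n,x_n)-f(z_n,y_n)\geq\tfrac{\alpha}{2\lambda_n}\phi(y_n,x_n)$ together with $f(z_n,x_n)\to 0$. The key is to show $\phi(y_n,x_n)\to 0$, hence $y_n\to\bar x$. If $\phi(y_n,x_n)\not\to 0$ along a subsequence, then $f(z_n,x_n)-f(z_n,y_n)$ is bounded below by a positive constant on that subsequence; but $z_n=(1-\rho_n)x_n+\rho_n y_n$ lies on the segment $[x_n,y_n]$, $\{y_n\}$ is bounded (from $\phi(x^*,z_n)\leq\phi(x^*,x_n)$-type estimates and \eqref{eq14}), so passing to a further subsequence $z_n\rightharpoonup\bar z$ and using $(A3^*)$ one gets $f(\bar z,\bar x)-f(\bar z,\bar y)>0$ while the optimality of $y_n$ in \eqref{equa38} forces, in the limit, $f(\bar x,\bar y)+\tfrac{1}{2\lambda}\,(\text{positive})\leq 0$, and together with pseudomonotonicity and $f(z_n,x_n)\to 0$ this is contradictory. (A cleaner route, mirroring \cite{Tr}, is: since $f(z_n,x_n)\to 0$ and $f(z_n,x_n)\geq\tfrac{\alpha\rho_n}{\lambda_n}\phi(y_n,x_n)$ from Lemma \ref{lem18}(ii)'s proof, either $\rho_n$ stays bounded away from $0$ — then $\phi(y_n,x_n)\to 0$ directly — or $\rho_n\to 0$ along a subsequence, in which case the failure of the linesearch inequality at the exponent $m-1$, i.e. at $z'_n=(1-\gamma^{-1}\rho_n)x_n+\gamma^{-1}\rho_n y_n\to\bar x$, yields in the limit via $(A3^*)$ exactly the contradiction $(1-\alpha)\phi(y_n,x_n)\le o(1)$ unless $\phi(y_n,x_n)\to 0$.) Once $y_n\to\bar x$, letting $n\to\infty$ in $\lambda_n f(x_n,y_n)+\tfrac12\phi(y_n,x_n)\leq\lambda_n f(x_n,y)+\tfrac12\phi(y,x_n)$ and using $(A3^*)$, $(A1)$, $0<\lambda\le\lambda_n\le 1$, and $\phi(y_n,x_n)\to 0$ gives $0\leq f(\bar x,y)+\tfrac1{2\lambda}\phi(y,\bar x)$ for all $y\in C$; setting $L(\bar x,y)=\tfrac1{2\lambda}\phi(y,\bar x)$ (which satisfies (i),(ii) of Lemma \ref{lem2}), Lemma \ref{lem2} yields $\bar x\in E(f)$. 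Finally, as in Theorem \ref{eq25}, Lemma \ref{lem4}(1) applied to $x_{n+1}=R_{C_n\cap D_n}x_0$ with $z=u^*$ gives $\phi(u^*,x_0)\geq\phi(\bar x,x_0)$, while Lemma \ref{eq38} gives $\phi(u^*,x_0)\leq\phi(\bar x,x_0)$ since $\bar x\in\Omega$ and $u^*=R_\Omega x_0$; hence $\bar x=u^*$ and all five sequences converge strongly to $R_\Omega x_0$.
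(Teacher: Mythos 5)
Your overall skeleton matches the paper's proof: the inclusion $\Omega\subseteq C_n\cap D_n$, the convergence $x_n\to\bar x$, $t_n\to\bar x$ and $\phi(x^*,x_n)-\phi(x^*,t_n)\to 0$, the deduction $\sigma_n\|g_n\|\to 0$ from Proposition \ref{lem9}(ii), the two-case analysis on $\limsup_n\rho_n$ (your ``cleaner route'' parenthetical is in fact exactly what the paper does, including the failure of the linesearch at exponent $m-1$ and the inequality $\tfrac12\phi(y_n,x_n)\leq-\lambda_n f(x_n,y_n)$), the three-term Lemma \ref{lem15} estimate for the $F(S)$ part, and the final identification $\bar x=u^*$ via Lemmas \ref{lem4} and \ref{eq38}.

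There is, however, one genuine gap: the boundedness of $\{y_n\}$, and consequently of $\{z_n\}$ and $\{g_n\}$. You justify it by ``$\phi(x^*,z_n)\leq\phi(x^*,x_n)$-type estimates,'' but no such Fej\'er inequality exists for Algorithm $2$: that estimate is Lemma \ref{lem1}(ii), whose derivation uses the $\phi$-Lipschitz-type condition $(A5)$, which is precisely the hypothesis the linesearch algorithm dispenses with. Moreover, in Algorithm $2$ the point $z_n$ is a convex combination of $x_n$ and $y_n$, not the solution of a second regularized problem, so no minimization-based bound is available either. This gap propagates: your step ``Lemma \ref{lem20} bounds $\{g_n\}$, hence $f(z_n,x_n)\to 0$'' needs a bound on $\{g_n\}$ that is uniform in $n$, and Lemma \ref{lem20} only gives boundedness of $\partial_2 f(z_n,x_n)$ for each fixed $n$; the paper obtains uniformity by combining Lemma \ref{lem20} with Lemma \ref{lem14}, which itself requires a weakly convergent (hence bounded) subsequence of $\{z_n\}$. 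The paper closes this circle with a separate argument you do not have: using the $2$-uniform convexity of $E$ (Lemma \ref{lem16}) it shows the operator $T_n(u)=\lambda_n\partial_2 f(x_n,u)+Ju-Jx_n$ is strongly monotone with modulus $\tau$, the optimality of $y_n$ gives $-T_n(y_n)\subseteq N_C(y_n)$, and the boundedness of $\partial_2 f(x_n,x_n)$ (via Lemmas \ref{lem14} and \ref{lem20} applied to the strongly convergent sequence $x_n\to\bar x$, which is already known to be bounded) yields $\tau\|x_n-y_n\|\leq\|t^n(x_n)\|$, whence $\{y_n\}$ is bounded. You need this, or an equivalent argument, before any of the subsequent limit passages involving $z_n$, $g_n$ or $y_n$ are legitimate.
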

 \begin{proof}
Let $x^{*}\in \Omega$. Similar to the proof of Theorem \ref{eq25}, we have
\begin{equation}
 \lim\limits_{n\rightarrow\infty}\|x_{n+1}-x_{n}\|=0\quad \& \quad
 \lim\limits_{n\rightarrow\infty}\|x_{n}-t_{n}\|=0,
 \end{equation}
 which imply that $\{x_{n}\}$ and consequently $\{t_{n}\}$ converge strongly to $\bar{x}\in C$ and
 \begin{equation}\label{equa43}
 \lim\limits_{n\rightarrow\infty}(\phi(x^{*},x_{n})-\phi(x^{*},t_{n}))=0.
 \end{equation}
 Since $(1-\alpha_{n})(\frac{2}{\nu}-\frac{4}{c^{2}})>0$, it follows from Lemma \ref{lem9} $(ii)$ that
 $\:\lim\limits_{n\rightarrow\infty}\sigma_{n}\|g_{n}\|=0.$
 \par
 Now, we prove that $\{y_{n}\}$, $\{z_{n}\}$ and $\{g_{n}\}$ are bounded and $f(z_{n},x_{n})\rightarrow 0$, as $n\rightarrow\infty$. Suppose that
 $$A(y)=\lambda_{n}f(x_{n},y)+\frac{1}{2}\phi(y,x_{n}).$$
 Since $\phi$ is lower semicontinuous respect to the first argument $y$, from Lemma \ref{lem8}, we deduce
 $$\partial A(y)=\lambda_{n}\partial_{2}f(x_{n},y)+\frac{1}{2}\nabla_{1}\phi(y,x_{n}).$$
 Let $u_{1}, u_{2}\in C$, $w_{1}\in\partial_{2}f(x_{n},u_{1})$ and $w_{2}\in \partial_{2}f(x_{n},u_{2})$, we have
 \begin{equation}\label{equa5}
 \langle w_{1},u_{1}-y\rangle\geq f(x_{n},u_{1})-f(x_{n},y), \:\: \: \forall y\in C,
 \end{equation}
 \begin{equation}\label{equa7}
 \langle -w_{2},y-u_{2}\rangle\geq f(x_{n},u_{2})-f(x_{n},y), \:\: \: \forall y\in C.
 \end{equation}
 Set $y=u_{2}$ in inequality (\ref{equa5}) and $y=u_{1}$ in inequality (\ref{equa7}), we get
 \begin{equation}\label{equa4}
\langle w_{1}-w_{2},u_{1}-u_{2}\rangle\geq 0.
\end{equation}
On the other hand, we have
\begin{equation}
\frac{1}{2}\nabla_{1}\phi(u_{i},x_{n})=Ju_{i}-Jx_{n},\:\:\: i=1,2.
\end{equation}
Therefore, using Lemma \ref{lem16}, there exists $\tau>0 $ such that
\begin{equation}\label{equa13}
\langle Ju_{1}-Ju_{2},u_{1}-u_{2}\rangle\geq \tau \|u_{1}-u_{2}\|^{2}.
\end{equation}
From inequalities (\ref{equa4}) and (\ref{equa13}), we obtain
\begin{equation}\label{equa26}
\langle (\lambda_{n} w_{1}+Ju_{1}-Jx_{n})-(\lambda_{n}w_{2}+Ju_{2}-Jx_{n}),u_{1}-u_{2}\rangle\geq\tau\|u_{1}-u_{2}\|^{2}.
\end{equation}
For all $u \in C$, put $T_{n}(u) := \lambda_{n}w + Ju - Jx_{n}$, where $w \in \partial_{2}f(x_{n},u)$. So $T_{n}(u) \subseteq \partial A(u)$ for all $u\in C$. Therefore it follows from inequality (\ref{equa26}) that
\begin{equation*}
\langle t^{n}(u_{1}) - t^{n}(u_{2}), u_{1} - u_{2} \rangle \geq\tau\| u_{1} - u_{2}\|,
\end{equation*}
 for all $u_{1}, u_{2} \in C$, all $t^{n}(u_{1}) \in T_{n}(u_{1})$ and $t^{n}(u_{2}) \in T_{n}(u_{2})$, this means $T_{n}$ is multivalued monotone.
\par
Using Lemma \ref{lem7} and Lemma \ref{lem8}, we have
$$y_{n}=\min_{y\in C}{\lambda_{n}f(x_{n},y)+\frac{1}{2}\phi(y,x_{n})},$$
if only if
$$0\in \partial A(y_{n})+N_{C}(y_{n}).$$
Which implies that $-T_{n}(y_{n})\subseteq N_{C}(y_{n})$, thus
\begin{equation}\label{equa27}
\langle t^{n}(y_{n}),y-y_{n}\rangle\geq 0,
\end{equation}
for all $y\in C$ and all $t^{n}(y_{n}) \in T_{n}(y_{n})$. Replacing $u_{1}$ and $u_{2}$ by $x_{n}$ and $y_{n}$ in inequality (\ref{equa26}), respectively  and interchanging $y$ by $x_{n}$  in inequality (\ref{equa27}), we have
\begin{equation}\label{equa28}
\langle t^{n}(x_{n}),x_{n}-y_{n}\rangle\geq\langle t^{n}(y_{n}),x_{n}-y_{n}\rangle+\tau\|x_{n}-y_{n}\|^{2}\geq\tau\|x_{n}-y_{n}\|^{2},
\end{equation}
for all $t^{n}(x_{n}) \in T_{n}(x_{n})$ and  all $t^{n}(y_{n}) \in T_{n}(y_{n})$. So, $t^{n}(x_{n})\in \partial A(x_{n})$, for all $t^{n}(x_{n}) \in T_{n}(x_{n})$ and since $\frac{1}{2}\nabla_{1}\phi(x_{n},x_{n})=0$, we obtain $t^{n}(x_{n})\in \lambda_{n}\partial_{2} f(x_{n},x_{n}).$ Since $x_{n}\rightarrow\bar{x}$, it follows from Lemma \ref{lem14} that
%for every $\varepsilon>0$, there exist $\eta>0$ and $n_{0}\in \mathbb{N}$ such that
$$\partial_{2}f(x_{n},x_{n})\subset\partial_{2}f(\bar{x},\bar{x}).$$
Thus, we can deduce from Lemma \ref{lem20} that $t^{n}(x_{n})$ is bounded, because of $\: 0<\lambda\leq\lambda_{n}\leq 1$. So, from inequality (\ref{equa28}), we obtain
$\tau\|x_{n}-y_{n}\|\leq\|t^{n}(x_{n})\|.$
Therefore,  we can conclude that $\{y_{n}\}$ is bounded. Since $\{z_{n}\}$ is a convex combination of $\{x_{n}\}$ and $\{y_{n}\}$, it is also bounded, hence, there exists a subsequence of $\{z_{n}\}$, again denoted by $\{z_{n}\}$, which converges weakly to $\bar{z}\in C$. In a similar way, it follows from lemmas \ref{lem20} and \ref{lem14} that the sequence $\{g_{n}\}$ is bounded.\\
If $x_{n}=y_{n}$ then we have $f(z_{n},x_{n})=0$ and $\sigma_{n}=0$ and if $x_{n}\neq y_{n}$, by the definition of $\sigma_{n}$, we obtain
$$\nu f(z_{n},x_{n})=\sigma_{n}\|g_{n}\|\|g_{n}\|\rightarrow0\:\:\:\Longrightarrow\:\:\: f(z_{n},x_{n})\rightarrow0,$$
since $\sigma_{n}\|g_{n}\|\rightarrow 0$ and $0<\nu<\frac{c^{2}}{2}$.
\par
Now, we show that $\bar{x}\in E(f)$ and $\|x_{n}-y_{n}\|\rightarrow 0$.
If $y_{n}=x_{n}$ then it follows from Lemma \ref{lem1} (i), that
\begin{equation}\label{equa44}
 \lambda_{n}f(x_{n},y)\geq0,
 \end{equation}
for all $y\in C.$ By letting $n\rightarrow\infty$ in inequality (\ref{equa44}) and using condition $(A3^{*})$, we get $\: f(\bar{x},y)\geq0$, because of $0<\lambda\leq\lambda_{n}\leq1$, i.e, $\bar{x}\in E(f).$
\par
Now, we let that $y_{n}\neq x_{n}$, since $f(z_{n},.)$ is convex, we obtain
\begin{equation*}
\begin{aligned}
\rho_{n}&f(z_{n},y_{n})+(1-\rho_{n})f(z_{n},x_{n})\\
&\geq f(z_{n},\rho_{n}y_{n}+(1-\rho_{n})x_{n})=f(z_{n},z_{n})=0.
\end{aligned}
\end{equation*}
Therefore,
\begin{equation}\label{equa45}
\rho_{n}[f(z_{n},x_{n})-f(z_{n},y_{n})]\leq f(z_{n},x_{n})\rightarrow0,
\end{equation}
as $n\rightarrow\infty$. By the Step $3.1$ of Algorithm $2$ and inequality (\ref{equa45}), we have
\begin{equation}\label{equa46}
\frac{\alpha\rho_{n}}{2\lambda_{n}}\phi(y_{n},x_{n})\leq\rho_{n}[f(z_{n},x_{n})-f(z_{n},y_{n})]\leq f(z_{n},x_{n})\rightarrow0.
\end{equation}
Now, we consider two cases:\\
{\bf Case $1$:} $\limsup_{n\rightarrow\infty}\rho_{n}>0$.\\
 In this case, there exists $\bar{\rho}>0$ and a subsequence of $\{\rho_{n}\}$, again denoted by $\{\rho_{n}\}$, such that $\rho_{n}\rightarrow\bar{\rho}$ and since $0<\lambda\leq\lambda_{n}\leq1$, from inequality (\ref{equa46}), we can conclude that
$$\phi(y_{n},x_{n})\rightarrow0.$$
Thus, from Lemma \ref{lem5}, we have $\|y_{n}-x_{n}\|\rightarrow0,$ which implies $y_{n}\rightarrow\bar{x}.$\\
{\bf Case $2$:} $\lim\limits_{n}\rho_{n}\rightarrow0$. \\
Let $m$ be the smallest nonnegative integer such that the Step 3.1 of Algorithm $2$ is satisfied, i.e.,
$$f(z_{n,m},x_{n})-f(z_{n,m},y_{n})\geq\frac{\alpha}{2\lambda}\phi(y_{n},x_{n}),$$
where $z_{n,m}=(1-\gamma^{m})x_{n}+\gamma^{m}y_{n}$. So,
\begin{equation}\label{equa47}
f(z_{n,m-1},x_{n})-f(z_{n,m-1},y_{n})<\frac{\alpha}{2\lambda_{n}}\phi(y_{n},x_{n}).
\end{equation}
On the other hand, setting $y=x_{n}$ in Lemma \ref{lem1} (i), condition $(A1)$ and equality (\ref{equa6}) imply that
$$-\lambda_{n}f(x_{n},y_{n})\geq\langle Jy_{n}-Jx_{n},y_{n}-x_{n}\rangle=\frac{1}{2}\phi(y_{n},x_{n})+\frac{1}{2}\phi(x_{n},y_{n}).$$
Therefore,
\begin{equation}\label{equa48}
\frac{1}{2}\phi(y_{n},x_{n})\leq-\lambda_{n}f(x_{n},y_{n}).
\end{equation}
From inequalities (\ref{equa47}) and (\ref{equa48}), we have
\begin{equation}\label{equa49}
f(z_{n,m-1},x_{n})-f(z_{n,m-1}, y_{n})<-\alpha f(x_{n},y_{n}).
\end{equation}
Taking the limits as $n\rightarrow\infty$ in above inequality,  we obtain $z_{n,m-1}\rightarrow\bar{x}$, since $\gamma^{m}=\rho_{n}\rightarrow0$. Because of $\{y_{n}\}$ is bounded,  there exists a subsequence of $\{y_{n}\}$, again denoted by $\{y_{n}\}$, which converges weakly to $\bar{y}\in C$. By letting $n \to \infty$ in inequality (\ref{equa49}) and using conditions $(A1)$ and $(A3^{*})$, we get
\begin{equation}
-f(\bar{x},\bar{y})\leq-\alpha f(\bar{x},\bar{y}).
\end{equation}
Which implies that $f(\bar{x},\bar{y})\geq0$, because of $\alpha\in (0,1)$. So, If we take the limits as $n\rightarrow\infty$ in inequality (\ref{equa48}), then we can conclude that
$\phi(y_{n},x_{n})\rightarrow0.$
Thus, from Lemma \ref{lem5}, we have
$\|y_{n}-x_{n}\|\rightarrow 0$,
which implies that $y_{n}\rightarrow\bar{x}$. By Lemma \ref{lem1}, we have
\begin{equation}\label{equa50}
\lambda_{n}[f(x_{n},y)-f(x_{n},y_{n})]\geq\langle Jy_{n}-Jx_{n},y_{n}-y\rangle,
\end{equation}
for all $y\in C$. By letting $n\rightarrow\infty$ in inequality (\ref{equa50}), it follows that $f(\bar{x},y)\geq0$, because of $0<\lambda\leq\lambda_{n}\leq1$, this means $\bar{x}\in E(f).$
\par
Now, we show that $\bar{x}\in F(S)$. Let $r_{1}=\sup_{n\geq0}\{\|x_{n}\|,\|w_{n}\|\}$ and $r_{2}=\sup_{n\geq0}\{\|w_{n}\|,\|Sw_{n}\|\}$. Using Lemma \ref{lem15}, there exists a continuous, strictly increasing and convex function\linebreak $g_{1}:[0,2r_{1}]\rightarrow\mathbb{R}$ with $g_{1}(0)=0$ such that for $x^{*}\in \Omega$, we get
\begin{equation}\label{eq.n1}
\begin{aligned}
\phi(x^{*},t_{n})&=\phi(x^{*},J^{-1}(\alpha_{n}Jx_{n}+(1-\alpha_{n})(\beta_{n}Jw_{n}+(1-\beta_{n})JSw_{n})))\\
&\leq\|x^{*}\|^{2}+\alpha_{n}\|x_{n}\|^{2}+(1-\alpha_{n})\beta_{n}\|w_{n}\|^{2}+(1-\alpha_{n})(1-\beta_{n})\|Sw_{n}\|^{2}\\
&\hspace{.5cm}-2\alpha_{n}\langle x^{*},Jx_{n}\rangle-2(1-\alpha_{n})\beta_{n}\langle x^{*},Jw_{n}
\rangle-2(1-\alpha_{n})(1-\beta_{n})\langle x^{*},JSw_{n}\rangle\\
&\hspace{.5cm}-\alpha_{n}(1-\alpha_{n})\beta_{n}g_{1}(\|Jw_{n}-Jx_{n}\|)\\
&\leq\phi(x^{*},x_{n})-\alpha_{n}(1-\alpha_{n})\beta_{n}g_{1}(\|Jw_{n}-Jx_{n}\|),
\end{aligned}
\end{equation}
and in a similar way, there exists a continuous, strictly increasing and convex function $g_{2}:[0,2r_{2}]\rightarrow\mathbb{R}$ with $g_{2}(0)=0$ such that for $x^{*}\in \Omega$, we obtain
\begin{equation}\label{eq.n2}
   \phi(x^{*},t_{n})\leq\phi(x^{*},x_{n})-(1-\alpha_{n})^{2}\beta_{n}(1-\beta_{n})g_{2}(\|Jw_{n}-JSw_{n}\|)=0.
\end{equation}
It follows from inequalities (\ref{eq.n1}) and (\ref{eq.n2}) that
\begin{equation}\label{equa30}
\alpha_{n}(1-\alpha_{n})\beta_{n}g_{1}(\|Jw_{n}-Jx_{n}\|)\leq\phi(x^{*},x_{n})-\phi(x^{*},t_{n}),
\end{equation}
\begin{equation}\label{equa31}
(1-\alpha_{n})^{2}\beta_{n}(1-\beta_{n})g_{2}(\|Jw_{n}-JSw_{n}\|)\leq\phi(x^{*},x_{n})-\phi(x^{*},t_{n}).
\end{equation}
Taking the limits as $n\rightarrow\infty$ in inequalities (\ref{equa30}) and (\ref{equa31}), using Lemma \ref{lem10} and equality (\ref{equa43}), we obtain
%\begin{equation*}
%\liminf_{n\rightarrow\infty}\alpha_{n}(1-\alpha_{n})\beta_{n}>0\quad \& \quad
%\liminf_{n\rightarrow\infty} (1-\alpha_{n})\beta_{n}(1-\beta_{n})>0,
%\end{equation*}
%it following from Lemma \ref{lem10} and inequality (\ref{equa43}) that
\begin{equation*}
\lim\limits_{n\rightarrow\infty}g_{1}(\|Jw_{n}-Jx_{n}\|)=0\quad \& \quad
\lim\limits_{n\rightarrow\infty}g_{2}(\|Jw_{n}-JSw_{n}\|)=0.
\end{equation*}
From the properties of $g_{1}$ and $g_{2}$, we have
\begin{equation*}
\lim\limits_{n\rightarrow\infty}\|Jw_{n}-Jx_{n}\|=0\quad \& \quad
\lim\limits_{n\rightarrow\infty}\|Jw_{n}-JSw_{n}\|=0.
\end{equation*}
Since $J^{-1}$ is uniformly norm-to-norm continuous on bounded sets, we obtain
\begin{equation}\label{equa32}
\lim\limits_{n\rightarrow\infty}\|w_{n}-x_{n}\|=0\quad \& \quad
\lim\limits_{n\rightarrow\infty}\|w_{n}-Sw_{n}\|=0.
\end{equation}
So, we  get $\bar{x}\in \hat{F}(S)$, because of $w_{n}\rightharpoonup\bar{x}$, therefore
using the definition of $S$, we have that $\bar{x}\in F(S).$
\par
  Setting $z=u^{*}$ in Lemma \ref{lem4}, since $x_{n+1}=R_{C_{n}\cap D_{n}}x_{0}$ and $\phi$ is continuous respect to the first argument, we obtain
$$\phi(u^{*},x_{0})\geq\lim\limits_{n\rightarrow\infty}\phi(x_{n+1},x_{0})=
\lim\limits_{n\rightarrow\infty}\phi(x_{n},x_{0})=\phi(\bar{x},x_{0}).$$
Also, since $u^{*}=R_{\Omega}x_{0}$, it follows from Lemma
\ref{eq38} that
$$\phi(u^{*},x_{0})\leq\phi(y,x_{0}),$$
for all $y\in \Omega$. Therefore $\bar{x}=u^{*}$ and consequently sequences
$\{x_{n}\}_{n=1}^{\infty}$, $\{y_{n}\}_{n=1}^{\infty}$, $\{z_{n}\}_{n=1}^{\infty}$, $\{w_{n}\}_{n=1}^{\infty}$ and $\{t_{n}\}_{n=1}^{\infty}$ converge strongly to $R_{\Omega}{x_{0}}$.
\end{proof}
\section{Numerical Example}
Now, we demonstrate theorems \ref{eq25} and \ref{thm1} with an example. Also, we compare the behavior of the sequence $\{x_{n}\}$ generated by algorithms $1$ and $2$.
\begin{exa}
Let $E=\mathbb{R}$ and $C=[-100,100]$. Define $f(x,y):=y^{2}-4x^{2}+3xy.$\\
We see that $f$ satisfies the conditions $(A1)-(A5)$ as follows:
 \begin{enumerate}
 \item[(A1)] $f(x,x):=x^{2}-4x^{2}+3x^{2}=0$ for all $x\in C$,
 \item[(A2)] If $f(x,y)=(y-x)(y+4x)\geq0$, then $$f(y,x)=(x-y)(x+4y)=(x-y)((y+4x)-3(x-y))=-f(x,y)-3(x-y)^{2}\leq0,$$
 for all $x, y\in C$, i.e., $f$ is pseudomonotone, while $f$ is not monotone.
 \item[(A3)]
 If $x_{n}\rightharpoonup\bar{x}$ and $y_{n}\rightharpoonup\bar{y}$, then $$f(x_{n},y_{n})=y_{n}^{2}-4x_{n}^{2}+3x_{n}y_{n}\rightarrow\bar{y}^{2}-4\bar{x}^{2}+3\bar{x}\bar{y}=f(\bar{x},\bar{y}),$$
 i.e., $f$ is jointly weakly continuous on $C\times C$.
\item[(A4)] Let $\theta\in (0,1)$. Since
\begin{equation*}
 \begin{aligned}
 f(x,\theta y_{1}+(1-\theta)y_{2})&=(\theta y_{1}+(1-\theta) y_{2})^{2}-4x^{2}+3x(\theta y_{1}+(1-\theta) y_{2})\\
 &\leq\theta(y_{1}^{2}-4x^{2}+3xy_{1})+(1-\theta)(y_{2}^{2}-4x^{2}+3xy_{2})\\
&=\theta f(x,y_{1})+(1-\theta)f(x,y_{2}),
 \end{aligned}
\end{equation*}
 so $f(x,.)$ is convex, also, $\liminf_{y\rightarrow\bar{y}}f(x,y)=f(x,\bar{y})$, hence $f(x,.)$ is lower semicontinuous. Since $\partial_{2}f(x,y)=2y+3x$, thus $f(x,.)$ is subdifferentiable on $C$ for each $x\in C.$
\item[(A5)] Since $\phi(y,x)=(y-x)^{2}$, we get
 \begin{equation}
\begin{aligned}
 f(x,y)+f(y,z)&=z^{2}-4x^{2}-3y^{2}+3xy+3zy\\
&= f(x,z)-\frac{3}{2}(y-x)^{2}-\frac{3}{2}(z-y)^{2}+\frac{3}{2}(x-z)^{2}\\
&\geq f(x,z)-\frac{3}{2}(y-x)^{2}-\frac{3}{2}(z-y)^{2},
 \end{aligned}
\end{equation}
i.e., $f$ satisfies the $\phi$-Lipschitz-type condition with $c_{1},c_{2}=\frac{3}{2}.$
 \end{enumerate}
Now, define $S:C\rightarrow C$ by $Sx=\frac{x}{5}$ for all $x\in C$, so $F(S)=\{0\}$ and
\begin{equation}
\begin{aligned}
\phi(0,Sx)&=\phi(0,\frac{x}{5})\\
&=0-2\langle 0,\frac{x}{5}\rangle+|\frac{x}{5}|^{2}\\
&\leq|x|^{2}\\
&=\phi(0,x),
\end{aligned}
\end{equation}
for all $x\in C$. Let $x_{n}\rightharpoonup p$ such that $\lim\limits_{n\rightarrow\infty}(Sx_{n}-x_{n})=0$,
this implies that $\hat{F}(S)=\{0\}$.\\
\begin{figure}[!h]
\includegraphics[totalheight=2.5in]{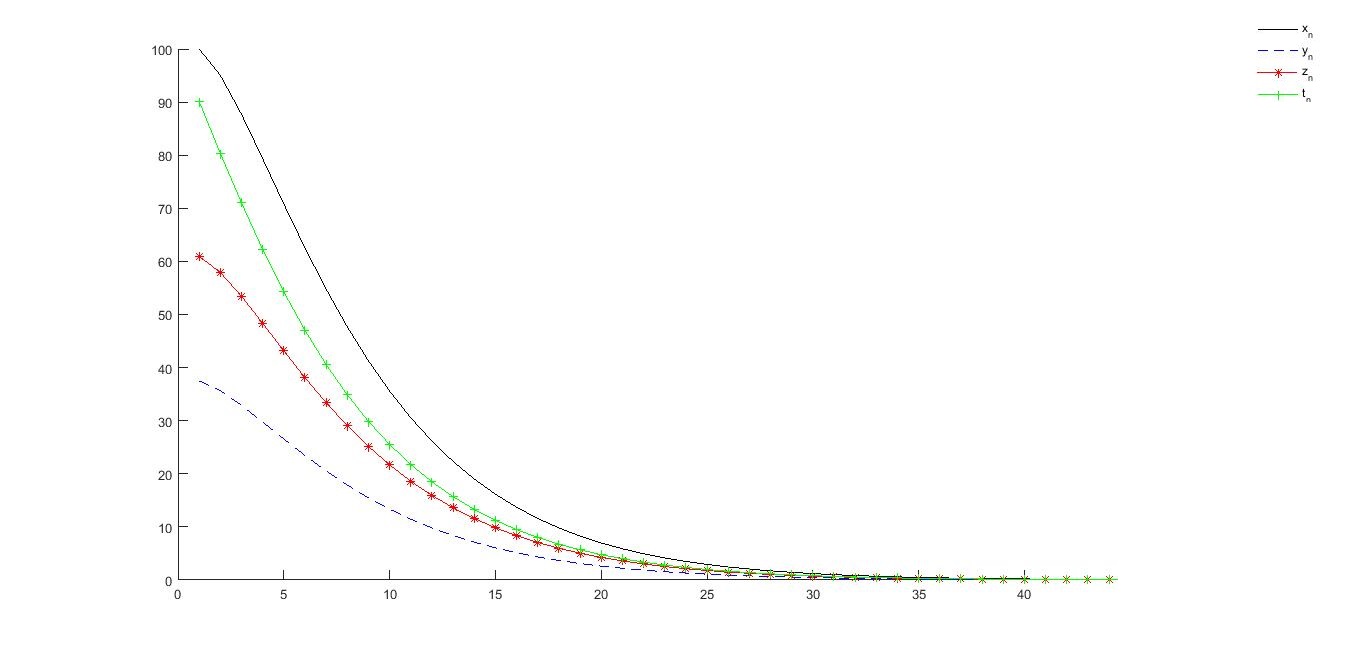}\vspace{-.75cm}\caption{Extragradient Algorithm}
\end{figure}
\begin{figure}[!h]
\includegraphics[totalheight=2.5in]{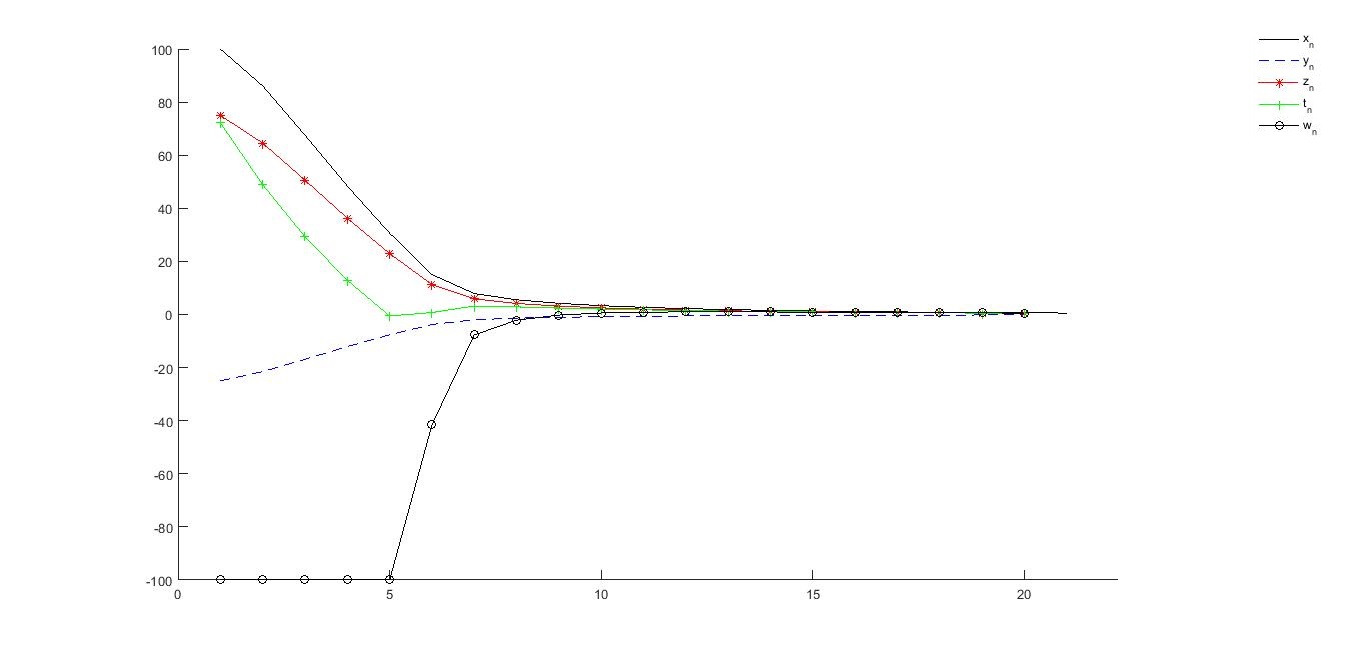}\vspace{-.75cm}\caption{Linesearch Algorithm}
\end{figure}
Thus, $\hat{F}(S)=F(S)$, i.e., $S$ is relatively nonexpansive
mapping. On the other hand, if for each $y\in C$, $f(x,y)\geq0$,
then $x=0$, i.e., $E(f)=\{0\}$ and consequently $\Omega=E(f)\cap
F(S)=\{0\}.$
%Now, in Extragradient Algorithm
Also, assume that $\alpha_{n}=\frac{1}{2}+\frac{1}{3+n}$,
$\beta_{n}=\frac{1}{3}+\frac{1}{4+n}$ for all $n\geq 0$ and
$x_{0}=100$.
\par
In Extragradient Algorithm, if $\lambda_{n}=\frac{1}{6}$, then we have
%\begin{equation}
%\begin{aligned}
 $$\frac{1}{6}f(x_{n},y_{n})+\frac{1}{2}(y_{n}-x_{n})^{2}=\min_{y\in C}\{\frac{1}{6}f(x_{n},y)+\frac{1}{2}(y-x_{n})^{2}\},$$
 i.e., $y_{n}=\frac{3}{8}x_{n},$ also $$ \frac{1}{6}f(y_{n},z_{n})+\frac{1}{2}(z_{n}-x_{n})^{2}=\min_{y\in C}\{\frac{1}{6}f(y_{n},y)+\frac{1}{2}(y-x_{n})^{2}\},$$
i.e., $z_{n}=\frac{39}{24}y_{n}=\frac{39}{64}x_{n},$ therefore
%\end{aligned}
%\end{equation}
\begin{equation*}
t_{n}=\alpha_{n}x_{n}+(1-\alpha_{n})[\beta_{n}z_{n}+\frac{1}{5}(1-\beta_{n})z_{n}],
\end{equation*}
and $x_{n+1}=R_{C_{n}\cap D_{n}}x_{0}$ or $|x_{n+1}-x_{0}|=\min_{z\in C_{n}\cap D_{n}}|z-x_{0}|$, where
\begin{equation*}
\begin{cases}
C_{n}=\{z\in C: |t_{n}-z|\leq |x_{n}-z|\},\\
D_{n}=\{z\in C: (x_{n}-z)(x_{0}-x_{n})\geq0\}.
\end{cases}
\end{equation*}
\begin{center}
\vspace{3mm}
\begin{tabular}{ c c c c c}
\hline Table1 & {\small Numerical Results for Algorithm $1$} &
\\
\cline{1-5}
n\hspace{0 cm}  & \hspace{-3 cm} {$x_{n}$} & \hspace{-4 cm} {$y_{n}$} &\hspace{0 cm} {$z_{n}$} &\hspace{1.5 cm} {$t_{n}$}\\
\hline\hline
0\hspace{0 cm} &\hspace{-3 cm}$100$&\hspace{-4 cm} $37.5$ &\hspace{0 cm}$60.94$ & \hspace{1.5 cm} $90.104$ \\
1\hspace{0 cm} &\hspace{-3 cm}$95.05$&\hspace{-4 cm}$35.65$ & \hspace{0 cm}$57.92$ & \hspace{1.5 cm}$80.36$\\
2\hspace{0 cm} &\hspace{-3 cm}$87.71$&\hspace{-4 cm}$32.89$ & \hspace{0 cm}$53.47$ & \hspace{1.5 cm}$71.016$\\
3\hspace{0 cm} &\hspace{-3 cm}$79.36$&\hspace{-4 cm}$29.61$ & \hspace{0 cm}$48.36$ &\hspace{1.5 cm}$62.277$\\
%4\hspace{0 cm} &\hspace{-3 cm}$72.6$&\hspace{0 cm}$-0.32$ & \hspace{0 cm}$0.2176$ & \hspace{0 cm}$-0.97e-3$\\
%5\hspace{0 cm} &\hspace{-3 cm}$64.6$&\hspace{0 cm}$-0.18$ & \hspace{0 cm}$0.1476$ & \hspace{0 cm}$-0.69e-4$\\
\hspace{0 cm} &\hspace{-3 cm}$\vdots$&\hspace{-4 cm}$\vdots$ & \hspace{0 cm}$\vdots$ & \hspace{1.5 cm}$\vdots$\\
37\hspace{0 cm} &\hspace{-3 cm}$0.2826$&\hspace{-4 cm} $0.1059$ &\hspace{0 cm}$0.1722$ & \hspace{1.5 cm} $0.1881$ \\
38\hspace{0 cm} &\hspace{-3 cm}$0.2353$&\hspace{-4 cm}$0.0882$ & \hspace{0 cm}$0.1434$ & \hspace{1.5 cm}$0.1565$\\
39\hspace{0 cm} &\hspace{-3 cm}$0.1959$&\hspace{-4 cm} $0.0734$ &\hspace{0 cm}$0.1194$ & \hspace{1.5 cm} $0.1302$ \\
40\hspace{0 cm} &\hspace{-3 cm}$0.163$&\hspace{-4 cm}$0.0611$ & \hspace{0 cm}$0.0993$ & \hspace{1.5 cm}$0.1082$\\
 \hspace{0 cm} &\hspace{-3 cm}$\vdots$&\hspace{-4 cm}$\vdots$ & \hspace{0 cm}$\vdots$ & \hspace{1.5 cm}$\vdots$\\
70\hspace{0 cm} &\hspace{-3 cm}$0.0003$ & \hspace{-4 cm}$0.000113$ & \hspace{0 cm}$0.000183$ &\hspace{1.5 cm}$0.000197$\\
%\hspace{0 cm} &\hspace{-3 cm}$0.2$&\hspace{-5 cm}$-0.05$ &  \hspace{0 cm}$0.1375$ &\hspace{2 cm}$0.1369$\\
71\hspace{0 cm} &\hspace{-3 cm}$0.0002 $&\hspace{-4 cm}$7.50e-05$ & \hspace{0 cm}$0.000122$ &\hspace{1.5 cm}$0.000131$\\
72\hspace{0 cm} &\hspace{-3 cm}$0.0001$&\hspace{-4 cm}$3.75e-05$ & \hspace{0 cm}$6.09e-05$ & \hspace{1.5 cm}$6.55e-05$\\
%\hspace{0 cm} &\hspace{-3 cm}$\vdots $&\hspace{-5 cm}$\vdots$ &\hspace{0 cm}$\vdots$ & \hspace{2 cm}$\vdots$\\
73\hspace{0 cm} &\hspace{-3 cm}$0$&\hspace{-4 cm}$0$ & \hspace{0 cm}$0$ & \hspace{1.5 cm}$0$\\
%99\hspace{-.3 cm} &\hspace{-1.7 cm}$2.669e-198 $&\hspace{-3.8 cm}$1.334e-198$ & \hspace{0 cm}$-1.927e-198$\\
%100\hspace{-.3 cm} &\hspace{-1.7 cm}$9.609e-201 $&\hspace{-3.8 cm}$4.804e-201$ & \hspace{0 cm}$-6.939e-201$\\
\hline
\end{tabular}
\end{center}
\begin{center}
\vspace{3mm}
\begin{tabular}{ c c c c c c}
\hline Table2 &  {\small  Numerical Results for Algorithm $2$}&
\\
\cline{1-6}
n\hspace{0 cm} & \hspace{-4cm} {$x_{n}$} & \hspace{-6 cm} {$y_{n}$} & \hspace{-1 cm} {$z_{n}$} & \hspace{1 cm} {$t_{n}$}  & \hspace{1.5 cm}{$w_{n}$}\\
\hline\hline
0\hspace{0 cm} &\hspace{-4 cm}$100$ & \hspace{-6 cm} $-25$ &\hspace{-1 cm} $75$ & \hspace{1 cm} $72.22$ & \hspace{1.5 cm} $-100$\\
1\hspace{0 cm} &\hspace{-4 cm}$86.11$&\hspace{-6 cm}$-21.53$ & \hspace{-1 cm}$64.53$ & \hspace{1 cm}$48.91$& \hspace{1.5 cm}$-100$\\
2\hspace{0 cm} &\hspace{-4 cm}$67.51$&\hspace{-6 cm}$-16.88$ & \hspace{-1 cm}$50.63$ & \hspace{1 cm}$29.26$& \hspace{1.5 cm}$-100$\\
3\hspace{0 cm} &\hspace{-4 cm}$48.38$&\hspace{-6 cm}$-12.097$ & \hspace{-1 cm}$36.29$ &\hspace{1 cm}$12.89$& \hspace{1.5 cm}$-100$\\
%4\hspace{0 cm} &\hspace{-3 cm}$72.6$&\hspace{0 cm}$-0.32$ & \hspace{0 cm}$0.2176$ & \hspace{0 cm}$-0.97e-3$\\
%5\hspace{0 cm} &\hspace{-3 cm}$64.6$&\hspace{0 cm}$-0.18$ & \hspace{0 cm}$0.1476$ & \hspace{0 cm}$-0.69e-4$\\
\hspace{0 cm} &\hspace{-4 cm}$\vdots$&\hspace{-6 cm}$\vdots$ & \hspace{-1 cm} $\vdots$ & \hspace{1 cm}$\vdots$& \hspace{1.5 cm}$\vdots$\\
37\hspace{0 cm} &\hspace{-4 cm}$0.0561$&\hspace{-6 cm} $-0.01403$ &\hspace{-1 cm} $0.0421$ & \hspace{1 cm} $0.0422$& \hspace{1.5 cm}$0.0553$ \\
38\hspace{0 cm} &\hspace{-4 cm}$0.0491$&\hspace{-6 cm}$-0.01228$ & \hspace{-1 cm} $0.0368$ & \hspace{1 cm}$0.0369$& \hspace{1.5 cm}$0.0485$\\
39\hspace{0 cm} &\hspace{-4 cm}$0.043$&\hspace{-6 cm} $-0.01075$ &\hspace{-1 cm} $0.0322$ & \hspace{1 cm} $0.0329$& \hspace{1.5 cm}$0.0426$ \\
40\hspace{0 cm} &\hspace{-4 cm}$0.0376$&\hspace{-6 cm}$-0.0094$ & \hspace{-1 cm} $0.0282$ & \hspace{1 cm}$0.0283$& \hspace{1.5 cm}$0.0373$\\
 \hspace{0 cm} &\hspace{-4 cm}$\vdots$&\hspace{-6 cm}$\vdots$ & \hspace{-1 cm}$\vdots$ & \hspace{1 cm}$\vdots$& \hspace{1.5 cm}$\vdots$\\
%35\hspace{0 cm} &\hspace{-4 cm}$0.3$ & \hspace{0 cm}$-0.04$ & \hspace{0 cm}$0.0484$ &\hspace{0 cm}$\vdots$\\
70\hspace{0 cm} &\hspace{-4 cm}$0.0003$&\hspace{-6 cm}$-7.50e-05$ &  \hspace{-1 cm}$0.00022$ &\hspace{1 cm}$0.00022$& \hspace{1.5 cm}$0.0003$\\
71\hspace{0 cm} &\hspace{-4 cm}$0.0002 $&\hspace{-6 cm}$-5.00e-05$ & \hspace{-1 cm}$0.00015$ &\hspace{1 cm}$0.00015$& \hspace{1.5 cm}$0.0002$\\
72\hspace{0 cm} &\hspace{-4 cm}$0.0001$&\hspace{-6 cm}$-2.50e-05$ & \hspace{-1 cm}$7.50e-5$ & \hspace{1 cm}$7.46e-5$ & \hspace{1.5 cm}$1.00e-04$\\
%\hspace{0 cm} &\hspace{-3 cm}$\vdots $&\hspace{-5 cm}$\vdots$ &\hspace{0 cm}$\vdots$ & \hspace{2 cm}$\vdots$\\
73\hspace{0 cm} & \hspace{-4 cm}$0$ &\hspace{-6 cm}$0$ & \hspace{-1 cm}$0$ & \hspace{1 cm}$0$ & \hspace{1.5 cm}$0$\\
%99\hspace{-.3 cm} &\hspace{-1.7 cm}$2.669e-198 $&\hspace{-3.8 cm}$1.334e-198$ & \hspace{0 cm}$-1.927e-198$\\
%100\hspace{-.3 cm} &\hspace{-1.7 cm}$9.609e-201 $&\hspace{-3.8 cm}$4.804e-201$ & \hspace{0 cm}$-6.939e-201$\\
\hline
\end{tabular}
\end{center}

 Since $\Omega=\{0\}$, we get $R_{\Omega}(x_{0})=0$.
  Moreover, numerical results for Algorithm $1$ show that the sequences $\{x_{n}\}$, $\{y_{n}\}$, $\{z_{n}\}$ and $\{t_{n}\}$ converge strongly to $0$.
\par
  In Linesearch Algorithm, $x_{n}$ is the same in Extragradient Algorithm.
  Assume that $\lambda_{n}=\frac{1}{2}$, $\alpha=\frac{1}{2}$, $\gamma=0.2$, $\nu=\frac{1}{4}$ and $c=1$. So,
   $$\frac{1}{2}f(x_{n},y_{n})+\frac{1}{2}(y_{n}-x_{n})^{2}=\min_{y\in C}\frac{1}{2}\{f(x_{n},y)+(y-x_{n})^{2}\},$$
 i.e., $y_{n}=-\frac{1}{4}x_{n},$ and $m$ is the smallest nonnegative integer such that
  $$(x_{n}-y_{n})(\frac{1}{2}x_{n}+\frac{3}{2}y_{n}+3z_{n})\geq0,$$
  where
 $$z_{n}=z_{n,m}=(1-(0.2)^{m})x_{n}+(0.2)^{m}y_{n}.$$
  Also, $g_{n}=2x_{n}+3z_{n}$ and $|w_{n}-(J^{-1}(Jx_{n}-\sigma_{n}g_{n}))|=\min_{z\in C}|z-(J^{-1}(Jx_{n}-\sigma_{n}g_{n})|$. \linebreak
  Since $y_{n}\neq x_{n}$, then $$\sigma_{n}=\frac{\frac{1}{4}(x_{n}^{2}-4z_{n}^{2}+3x_{n}z_{n})}{|g_{n}|},$$
  and
  \begin{equation*}
t_{n}=\alpha_{n}x_{n}+(1-\alpha_{n})[\beta_{n}w_{n}+\frac{1}{5}(1-\beta_{n})w_{n}].
\end{equation*}
Furthermore,  numerical results for Algorithm $2$ show that the
sequences $\{x_{n}\}$, $\{y_{n}\}$, $\{z_{n}\}$, $\{t_{n}\}$ and
$\{w_{n}\}$ converge strongly to $0$. By comparing Figure1 and
Figure2, we see that the speed of convergence of the sequence
$\{x_{n}\}$ generated by Linesearch Algorithm is equal to
Extragradient Algorithm. The computations associated with example
were performed using MATLAB (Step:$10^{-4}$) software.
   %Taking the limit as $k\rightarrow\infty$ in (\ref{eq71}), we obtain $\lim\limits_{k\rightarrow\infty}x^{k}=0$ and from (\ref{eq72}), we have $\lim\limits_{k\rightarrow\infty}y^{k}=\lim\limits_{k\rightarrow\infty}z^{k}=0$. See Figure1 and Figure2 for the values $x^{1}=2$ and $x^{1}=0.02$.
\end{exa}

{\footnotesize}

\end{document}